\setlist[itemize,2]{label=$\centerdot$}
\setlist[itemize,3]{label=$\triangle$}
\newtheoremstyle{ptheorem}{1em}{0em}{\itshape}{}{\bfseries}{.}{.5em}{\thmname{#1}\thmnumber{
		#2}\thmnote{ (\hspace{-.01pt}{#3})}}
\theoremstyle{ptheorem}
\newtheorem{thm}{Theorem}[section]
\newtheorem{pro}[thm]{Proposition}
\newtheorem{lem}[thm]{Lemma}
\newtheorem{cor}[thm]{Corollary}
\newtheoremstyle{hdef}{1em}{0em}{}{}{\bfseries}{.}{.5em}{\thmname{#1}\thmnumber{
		#2}\thmnote{ (\hspace{-.01pt}{#3})}}
\theoremstyle{hdef}
\newtheorem{dfn}[thm]{Definition}
\newtheorem{rem}[thm]{Remark}
\newtheorem{exa}[thm]{Example}
\numberwithin{equation}{section}
\numberwithin{figure}{section}
\DeclareMathOperator{\Id}{Id}
\DeclareMathOperator{\dif}{d}
\newcommand{\cB}{{\mathcal B}}
\newcommand{\cC}{{\mathcal C}}
\newcommand{\bF}{{\mathbb F}}
\newcommand{\bN}{{\mathbb N}}
\newcommand{\bR}{{\mathbb R}}
\newcommand{\e}{\varepsilon}
\renewcommand{\phi}{\varphi}
\renewcommand{\le}{\leqslant}
\renewcommand{\ge}{\geqslant}
\newcommand{\n}{{n\in\bN}}
\renewcommand{\d}{\delta}
\renewcommand{\(}{\left(}
\renewcommand{\)}{\right)}
\newcommand{\til}{\widetilde}
\newcommand{\bs}{\backslash}
\newcommand{\olb}[1]{%
	\vbox{\offinterlineskip\ialign{\hfil##\hfil\cr
			$\rotatebox[origin=c]{90}{$]$}$\cr\noalign{\kern-.45ex}{$#1$}\cr}}}
\newcommand{\noop}[1]{}
\renewcommand{\ss}{\subset}
\newcommand{\irchi}{\mbox{\raisebox{1.5pt}{$\chi$}}}
\begin{document}

	\title{Consequences of the product rule in Stieltjes differentiability}
	\author{Francisco J. Fernández$^{*}$, Ignacio Márquez Albés$^{**\dagger}$ and F. Adri\'an F. Tojo$^{*\dagger}$}
	\date{}

	\author{
		F. Javier Fernández$^{*}$\\
		\normalsize e-mail: fjavier.fernandez@usc.es\\
		Ignacio Márquez Albés$^{*\dagger}$ \\
		\normalsize e-mail: ignacio.marquez@usc.es \\
		F. Adri\'an F. Tojo$^{*\dagger}$ \\
		\normalsize e-mail: fernandoadrian.fernandez@usc.es\\ \normalsize \emph{$^{*}$Departamento de Estatística, Análise Matemática e Optimización},\\ \normalsize \emph{Universidade de Santiago de Compostela, 15782, Facultade de Matemáticas, Santiago, Spain.}\\\normalsize
		\\\normalsize \emph{$^{\dagger}$CITMAga, 15782, Santiago de Compostela, Spain}}

\maketitle

\begin{abstract}
	This work revolves around the study of differentiability in the Stieltjes sense of a product of functions. A formula for the first order derivative has been obtained in the past, which is similar to the usual one with some extra terms in its expression. The aim of this paper is to take this behavior into account to study under which conditions we can guarantee the existence of higher order derivatives, while obtaining some other interesting results for the Stieltjes derivative along the way. We also investigate the regularity of the product of two functions.
\end{abstract}

	{\small\textbf{Keywords:} Stieltjes derivative, higher order derivatives, product rule.}

	{\small\textbf{MSC 2020:} 26A06, 26A24, 26A27.}

	\section{Introduction}

	The notion of differentiating with respect to a function (Stieltjes differentiation) is a rather classical concept --see \cite{Daniell,young1917,lebesgue1928leccons}-- that has its roots at the genesis of calculus when, in an intuitive way, it was usual to think of a quantity varying continuously with respect to another (dependent or independent). More recently, a new important application of Stieltjes calculus was found: it serves as a powerful bridge between continuous and discrete calculus and, importantly, differential equations. In particular, it was shown that other methods of unifying discrete and differential calculus, such as time scales or equations with impulses, can be thought of as particular instances of Stieltjes calculus \cite{LoRo14}. This theory has the advantage of building on the powerful results of measure theory but, at the same time, allowing for classical solutions and explicit computations. From there, several authors have revitalized the theory by providing a solid theoretical framework \cite{PoMa,FriLo17} and different applications of Stieltjes differential equations such as models of silk worm populations \cite{PoMa}, fishing models with open and closed seasons \cite{LoMa19}, culture and growth of cyanobacteria \cite{LoMa19Resolution,LoMaMon18}, solvent solution and water evaporation models \cite{FP2016}, among others.

	In those previous works there was no definition of the Stieltjes derivative which made sense at every point of the domain. Instead, they provided a definition at almost every point for a certain measure, being thus comparable to the Radon-Nikodym derivative. Recently, in \cite{Fernandez2021}, the definition was extended in order to include all points of the domain, which also had an important repercussion in previously known results such as the product rule, which, in this more general context, reads
	\[	\left(f_{1} f_{2}\right)_{g}'(t)=\left(f_{1}\right)_{g}'(t) f_{2}(t^*)+\left(f_{2}\right)_{g}'(t) f_{1}(t^*)+\left(f_{1}\right)_{g}'(t)\left(f_{2}\right)_{g}'(t) \Delta g(t^*),\]
	where $g$ is the function defining the Stieltjes derivative and $t^*$ is a point that depends on $t$.

	This expression has immediate consequences that make Stieltjes Calculus different from the usual setting. It is evident from the expression that the regularity of the functions $f_1$ and $f_2$ in the expression is not enough to guarantee the same level of regularity to the product as, once we differentiate for a first time, 
	the terms $f_{1}(t^*)$, $f_{2}(t^*)$ and $\Delta g(t^*)$ appear. This fact begs a few questions: \emph{given a map $f$ that is $g$-differentiable at a point $t$, is the map $f^*(t):=f(t^*)$ $g$-differentiable at $t$? Is there a relation between the $g$-derivatives of the two functions?} Similarly, we need to concern ourselves with the study of the $g$-differentiability of the map $\Delta g^*$ and its role in the product rule. It would seem that, at least, $\Delta g$ ought to be always $g$-differentiable as its regularity is intrinsically defined by the same function $g$ with respect to which we are differentiating, but this is far from reality, as previously known examples show --cf. \cite[Remark~3.16]{Fernandez2021}. Hence, we need to ask ourselves, \emph{when is $\Delta g^*$ $g$-differentiable?}


	Having a clear understanding of these issues is vital in order to deepen in the study of the classes of continuously $g$-differentiable functions and, in particular, in order to ascertain the role of the product of functions in these classes for, as it is already known, we will not enjoy the usual algebra structure with the product of functions as algebra operation.

	With these objectives in mind, we set our course to better understand the differentiability of $\Delta g$ and the role of $t^*$ throughout the following sections. In Section 2 we establish same basic definitions and results that will be necessary in order to understand and develop further results. It is with Section 3 that we derive the $g$-differentiability properties of $\Delta g$ and provide counterexamples that show the optimality of our results. 
			In Section 4 we show under which conditions the product of two functions is, at least, two times $g$-differentiable. To that end, we study how evaluating on $t^*$ instead of $t$ affects a given function with regard to its $g$-differentiability and, in particular, the function $\Delta g$. 
 Finally, in Section 5 we study the continuity of the $g$-derivative of the product.
	\section{Preliminaries}
	Let $g:\mathbb R\to\mathbb R$ be a nondecreasing and left-continuous function, which we call \emph{derivator}, and denote by $\mathbb F$ the field $\mathbb R$ or $\mathbb C$.
	We shall write as 
	$\mu_g$ the Lebesgue-Stieltjes measure associated to $g$ given by
	\[\mu_g([c,d))=g(d)-g(c),\quad c,d\in\mathbb R,\ c<d,\]
	see \cite{Ru87,Sche97,Burk07}. It is important to remark that $\mu_g$ is a Borel measure.
	 We will use the term ``$g$-measurable'' for a set or function to refer to $\mu_g$-measurability in the corresponding sense, and we denote by $\mathcal L^1_{g}(X,\mathbb F)$ the set of Lebesgue-Stieltjes $\mu_g$-integrable functions on a $g$-measurable set $X$ with values in $\mathbb F$, whose integral we write as
	\[\int_X f(s)\,\dif\mu_g(s),\quad f\in\mathcal L^1_{g}(X,\mathbb F).\]
	Similarly, we will talk about properties holding \emph{$g$-almost everywhere} in a set $X$ (shortened to \emph{$g$-a.e.} in~$X$), or holding for \emph{$g$-almost all} (or, simply, \emph{$g$-a.a.}) $x\in X$, as a simplified way to express that they hold $\mu_g$-almost everywhere in $X$ or for $\mu_g$-almost all $x\in X$, respectively.

	Define the sets
	\begin{align*}
		C_g&=\{ t \in \mathbb R \, : \, \mbox{$g$ is constant on $(t-\varepsilon,t+\varepsilon)$ for some $\varepsilon>0$} \},\\
		D_g&=\{ t \in \mathbb R \, : \, \Delta g(t)>0\},
	\end{align*}
	where $\Delta g(t):=g(t^+)-g(t)$, $t\in\mathbb R$, and $g(t^+)$ denotes the right handside limit of $g$ at $t$. First, observe that $C_g\cap D_g=\emptyset$. Furthermore, as pointed out in \cite{LoRo14}, the set $C_g$ is open in the usual topology of the real line, so it can be uniquely expressed as a countable union of open disjoint intervals, say
	\begin{equation}\label{Cgdisj}
		C_g=\bigcup_{n\in\Lambda} (a_n,b_n).
	\end{equation}
	where $\Lambda\ss\bN$. With this notation, we introduce
	the sets $N_g^-$ and $N_g^+$ in \cite{LoMa19Resolution}, defined as
	\[N_g^-=\{a_n: n\in\Lambda\}\backslash D_g,\quad N_g^+=\{b_n:n\in\Lambda\}\backslash D_g,\quad N_g=N_g^-\cup N_g^+.\]
	\begin{rem}\label{pointcharact}
		It is important to remark for the work ahead that, by definition, for any $t\in\mathbb R\backslash C_g$, at least one of the following conditions must hold:
		\begin{align}
			g(s)<g(t),\quad\mbox{for all }s\in[a,b],\ s<t,\label{cond1}\\
			g(s)>g(t),\quad\mbox{for all }s\in[a,b],\ s>t.\label{cond2}
		\end{align}
		In particular, for $t\in N_g^-$ only~\eqref{cond1} holds; and, similarly, for $t\in N_g^+$ only~\eqref{cond2} holds. On the other hand, if $t\in D_g$, then~\eqref{cond2} always holds and~\eqref{cond1} fails only when $t=b_n$, $n\in\mathbb N$, for some $b_n$ as in~\eqref{Cgdisj}. For the remaining cases, i.e. when $t\in\mathbb R\backslash (C_g\cup N_g\cup D_g)$, both~\eqref{cond1} and~\eqref{cond2} hold.
	\end{rem}

	Let us recall the following definition of Stieltjes derivative in \cite[Definition 3.7]{Fernandez2021}. 
	To that end, we consider $a,b\in\mathbb R$, $a<b$, such that $a\not\in N_g^-$ and $b\not\in D_g\cup C_g\cup N_g^+$. 
	A careful reader might observe that throughout the entirety of \cite{Fernandez2021} it is also required that $g(a)=0$ and $a\not\in D_g$. The first of these conditions can easily be avoided by redefining the map $g$ if necessary; whereas the condition $a\not\in D_g$ can be imposed without loss of generality, as pointed out by \cite{Fernandez2021,FriLo17} and \cite[Proposition~4.28]{MarquezTesis}, when the focus of the study is the existence and uniqueness of solution of differential problems, which is not our case. Nevertheless, this condition is not required for the following definition.

	\begin{dfn}[{\cite[Definition 3.7]{Fernandez2021}}]
		We define the \emph{Stieltjes derivative}\index{Stieltjes derivative}, or \emph{$g$-derivative}\index{$g$-derivative}, of a map $f:[a,b]\to\mathbb F$ at a point $t\in [a,b]$ as
		\[
		f'_g(t)=\left\{
		\begin{array}{ll}
			\displaystyle \lim_{s \to t}\frac{f(s)-f(t)}{g(s)-g(t)},\quad & t\not\in D_{g}\cup C_g,\vspace{0.1cm}\\
			\displaystyle\lim_{s\to t^+}\frac{f(s)-f(t)}{g(s)-g(t)},\quad & t\in D_{g},\vspace{0.1cm}\\
			\displaystyle\lim_{s\to b_n^+}\frac{f(s)-f(b_n)}{g(s)-g(b_n)},\quad & t\in C_{g},\ t\in(a_n,b_n),
		\end{array}
		\right.
		\]
		where $a_n, b_n$ are as in~\eqref{Cgdisj}, provided the corresponding limits exist. In that case, we say that $f$ is \emph{$g$-differentiable at $t$}. 
	\end{dfn}

	\begin{rem}\label{remNgderivative}
		For $t\in N_g\cup\{a,b\}$,
		the corresponding limit in the definition of $g$-derivative at $t$ must be understood in the sense explained in \cite[Remark 2.2]{Ma21}, that is, the Stieltjes derivative in such points is computed as
		\begin{equation*} 
			f'_g(t)=\left\{\begin{array}{ll}
				\displaystyle \displaystyle \lim_{s \to t^+}\frac{f(s)-f(t)}{g(s)-g(t)}, & t\in N_g^+\cup\{a\}, \vspace{0.1cm} \\
				\displaystyle \displaystyle \lim_{s \to t^-}\frac{f(s)-f(t)}{g(s)-g(t)}, & t\in N_g^-\cup\{b\},
			\end{array}\right.
		\end{equation*}
		provided the corresponding limit exists.
	\end{rem}
	\begin{rem}\label{remDgCg}
		It follows from the definition that, for $t\in D_g$, $f'_g(t)$ exists if and only if $f(t^+)$ exists and, in that case,
		\begin{equation*}
			f'_g(t)=\frac{f(t^+)-f(t)}{\Delta g(t)}.
		\end{equation*}
		Similarly, if $t\in (a_n,b_n)\subset C_g$ for some $a_n, b_n$ in~\eqref{Cgdisj}, we have that $f'_g(t)$ exists if and only if $f'_g(b_n)$ exists and, in that case, $f'_g(t)=f'_g(b_n)$.
	\end{rem}


	It is possible to further simplify the definition of Stieltjes derivative at a point $t\in[a,b]$ by defining 
	\begin{equation}\label{tstar}
		t^*=
		\begin{dcases}
			t,\quad & t\not\in C_g,\vspace{0.1cm}\\
			b_n,\quad & t\in (a_n,b_n)\subset C_g,
		\end{dcases}
	\end{equation}
	with $a_n,b_n$ as in~\eqref{Cgdisj}. With this notation, we have that 
	\[
	f'_g(t)=\left\{
	\begin{array}{ll}
		\displaystyle \lim_{s \to t}\frac{f(s)-f(t)}{g(s)-g(t)},\quad & t\not\in D_{g}\cup C_g,\vspace{0.1cm}\\
		\displaystyle\lim_{s\to t^{*+}}\frac{f(s)-f(t^*)}{g(s)-g(t^*)},\quad & t\in D_{g}\cup C_g,
	\end{array}
	\right.
	\]
	provided the corresponding limit exists. Note that the information in Remark~\ref{remNgderivative} should still be taken into account.

	For completeness, we include the following result contaning some basic properties of $t^*$ that follow directly from the definition and the assumptions on the point $b$.
	\begin{lem}Let $t\in[a,b]$ and let $t^*$ be the corresponding point in~\eqref{tstar}. Then, we have that $t^*\in[t,b]$ and $g(t)=g(t^*)$. Furthermore,
		\begin{itemize}
			\item if $t\in C_g$, then $t^*\not\in C_g$ (and so $t^*\ne t$);
			\item if $t\in [a,b]\bs C_g$, then $t=t^*$;
			\item if $t^*\in [a,b]\bs \{b_n:n\in\Lambda\}$, then $t=t^*$.
		\end{itemize}
		In particular, if we denote by $t^{**}$ the corresponding point in~\eqref{tstar} for $t^*$, it holds that $t^{**}=t^*$.
	\end{lem}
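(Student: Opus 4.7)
The plan is to verify each assertion by direct inspection of the definition of $t^*$ in~\eqref{tstar}, combined with the structural information about $C_g$ given in~\eqref{Cgdisj} and the standing hypotheses on the endpoints $a$ and $b$.

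First I would address the claim $t^*\in[t,b]$ together with the equality $g(t)=g(t^*)$. If $t\notin C_g$, both are trivial since $t^*=t$. If $t\in(a_n,b_n)\ss C_g$ for some $n\in\Lambda$, then $t^*=b_n>t$ by construction; to place $b_n$ inside $[a,b]$ I would invoke the assumption $b\notin C_g$, which forces the maximal interval $(a_n,b_n)$ containing the point $t\in[a,b]$ to end at or before $b$. The equality $g(t)=g(t^*)$ then follows by combining the fact that $g$ is constant on $(a_n,b_n)$ with the left-continuity of $g$ at $b_n$, which gives $g(t)=g(b_n^-)=g(b_n)=g(t^*)$.

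Next I would prove the three bulleted items. The first follows from the fact that the family $\{(a_n,b_n)\}_{n\in\Lambda}$ in~\eqref{Cgdisj} consists of the connected components of the open set $C_g$, so the right endpoint $b_n$ of any such maximal interval cannot itself lie in $C_g$; hence whenever $t\in C_g$ we obtain $t^*=b_n\notin C_g$, and in particular $t^*\neq t$. The second item is nothing but the first branch of definition~\eqref{tstar}. The third is the contrapositive of the second together with the first: if $t\neq t^*$, then we are forced into the second branch of~\eqref{tstar}, so $t\in C_g$ and $t^*=b_n\in\{b_n:n\in\Lambda\}$; equivalently, $t^*\notin\{b_n:n\in\Lambda\}$ implies $t=t^*$.

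Finally, for $t^{**}=t^*$, I would combine the previous observations: in all cases $t^*\notin C_g$ (either because $t\notin C_g$ and $t^*=t$, or because $t\in C_g$ and the first bullet applies), and then the second bullet applied to the point $t^*$ yields $t^{**}=t^*$. The only mildly delicate step in the whole argument is ensuring $b_n\le b$ in the verification of $t^*\in[t,b]$; everything else is a direct unpacking of definitions.
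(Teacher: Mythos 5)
Your proof is correct; the paper gives no proof of this lemma at all, stating only that the properties ``follow directly from the definition and the assumptions on the point $b$'', and your argument is exactly that direct verification spelled out. You also correctly isolate the one point that genuinely uses a hypothesis rather than the bare definition, namely that $b\notin C_g$ forces $b_n\le b$ so that $t^*\in[t,b]$.
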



	The following result can be found in \cite[Proposition~3.9]{Fernandez2021} and it includes some basic properties of the Stieltjes derivative. Observe that this is a generalization of \cite[Proposition~3.13]{MarquezTesis}, which is only stated for real-valued functions which are $g$-differentiable at a point of $\mathbb R\backslash C_g$.
	\begin{pro}\label{PropStiDer} 
		Let $t\in[a,b]$.	 If $f_1,f_2:[a,b]\to\mathbb F$ are $g$-differentiable at $t$, then:
		\begin{itemize}
			\item The function $\lambda_{1} f_{1}+\lambda_{2} f_{2}$ is $g$-differentiable at $t$ for any $\lambda_{1}, \lambda_{2} \in \mathbb{R}$ and
			\begin{equation*}%
				\left(\lambda_{1} f_{1}+\lambda_{2} f_{2}\right)_{g}'(t)=\lambda_{1}\left(f_{1}\right)_{g}'(t)+\lambda_{2}\left(f_{2}\right)_{g}'(t).
			\end{equation*}
			\item The product $f_{1} f_{2}$ is $g$-differentiable at $t$ and
			\begin{equation}\label{deltaproducto}
				\left(f_{1} f_{2}\right)_{g}'(t)=\left(f_{1}\right)_{g}'(t) f_{2}(t^*)+\left(f_{2}\right)_{g}'(t) f_{1}(t^*)+\left(f_{1}\right)_{g}'(t)\left(f_{2}\right)_{g}'(t) \Delta g(t^*).
			\end{equation}
			\item If $f_2(t^*)\,(f_2(t^*)+(f_2)'_g(t)\, \Delta g(t^*))\neq 0$, the quotient $f_1/f_2$ is $g$-differentiable at $t$ and 
			\begin{equation*}
				\left(\frac{f_1}{f_2}\right)'_g(t)=\frac{\left(f_{1}\right)_{g}'(t) f_{2}(t^*)-\left(f_{2}\right)_{g}'(t) f_{1}(t^*)}{f_2(t^*)\,(f_2(t^*)+(f_2)'_g(t)\, \Delta g(t^*))}
			\end{equation*}
		\end{itemize}
	\end{pro}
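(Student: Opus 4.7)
The plan is to unify the three branches in the definition of $f'_g$ by working with the single expression
\[f'_g(t) = \lim_{s\to t^*} \frac{f(s) - f(t^*)}{g(s) - g(t^*)},\]
where the limit is taken from the side prescribed by the definition and Remark~\ref{remNgderivative}, and to reduce the whole proposition to one intermediate claim: if $f$ is $g$-differentiable at $t$, then in that same direction
\[f(s) - f(t^*) \longrightarrow f'_g(t)\,\Delta g(t^*).\]
Linearity is then immediate from the linearity of limits.

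For the product rule I would expand, using $g(t) = g(t^*)$ from the previous lemma,
\[\frac{(f_1 f_2)(s) - (f_1 f_2)(t^*)}{g(s) - g(t^*)} = \frac{f_1(s) - f_1(t^*)}{g(s) - g(t^*)}\bigl(f_2(s) - f_2(t^*)\bigr) + \frac{f_1(s) - f_1(t^*)}{g(s) - g(t^*)}\,f_2(t^*) + f_1(t^*)\,\frac{f_2(s) - f_2(t^*)}{g(s) - g(t^*)},\]
and pass to the limit. The second and third terms yield $(f_1)'_g(t) f_2(t^*)$ and $f_1(t^*)(f_2)'_g(t)$ directly from the definition, while the first term yields $(f_1)'_g(t)(f_2)'_g(t)\Delta g(t^*)$ by the intermediate claim applied to $f_2$. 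For the quotient rule, the same claim applied to $f_2$ gives $\lim f_2(s) = f_2(t^*) + (f_2)'_g(t)\Delta g(t^*)$, which is nonzero by hypothesis, so $f_1/f_2$ is defined in the relevant one-sided neighbourhood of $t^*$; the algebraic identity
\[\frac{f_1(s)}{f_2(s)} - \frac{f_1(t^*)}{f_2(t^*)} = \frac{(f_1(s) - f_1(t^*))f_2(t^*) - f_1(t^*)(f_2(s) - f_2(t^*))}{f_2(s)\,f_2(t^*)}\]
then yields the stated formula after dividing by $g(s) - g(t^*)$ and letting $s \to t^*$.

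It remains to prove the intermediate claim, which is the only point requiring a case split. When $t \in D_g$, we have $t^* = t$ and Remark~\ref{remDgCg} gives directly $f(t^+) - f(t) = f'_g(t)\Delta g(t)$. When $t \in C_g$ with $t^* = b_n \in D_g$, the same remark, applied first to identify $f'_g(t) = f'_g(b_n)$ and then to compute $f(b_n^+) - f(b_n)$, closes the case. In every remaining situation $\Delta g(t^*) = 0$; since $t^* \notin D_g$ and $g$ is left-continuous, $g$ is in fact continuous at $t^*$ from the relevant side, so $g(s) - g(t^*) \to 0$ and the factorisation
\[f(s) - f(t^*) = \frac{f(s) - f(t^*)}{g(s) - g(t^*)}\bigl(g(s) - g(t^*)\bigr)\]
forces the limit to be $f'_g(t)\cdot 0 = 0 = f'_g(t)\Delta g(t^*)$. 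The main obstacle is just this bookkeeping --- keeping track of whether $t$ and $t^*$ lie in $D_g$, $C_g$ or $N_g$, and of the one-sided limits prescribed by Remark~\ref{remNgderivative} when $t$ or $t^*$ lies in $N_g \cup \{a,b\}$ --- and no genuinely new technique is needed.
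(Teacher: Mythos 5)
The paper does not actually prove this proposition: it is imported verbatim from \cite[Proposition~3.9]{Fernandez2021}, so there is no in-paper argument to compare yours against. On its own merits, your proof is correct and self-contained. The reduction of all three items to the single claim that $f(s)-f(t^*)\to f'_g(t)\,\Delta g(t^*)$ along the prescribed approach direction is the right organising idea: when $t^*\in D_g$ it is exactly the content of Remark~\ref{remDgCg}, and when $t^*\notin D_g$ it follows because $g(s)-g(t^*)\to 0$ (by left-continuity of $g$ on one side and $\Delta g(t^*)=0$ on the other) while the difference quotient stays bounded. The algebraic decompositions for the product and the quotient check out, and the hypothesis $f_2(t^*)\,\bigl(f_2(t^*)+(f_2)'_g(t)\,\Delta g(t^*)\bigr)\neq 0$ does give both that $f_1/f_2$ is defined at $t^*$ and that $f_2$ is bounded away from zero on the relevant approach set near $t^*$, which is all the limit requires. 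The only point you leave implicit --- harmless, but worth a sentence --- is that every limit here is taken over $\{s\in[a,b]:g(s)\neq g(t^*)\}$ from the side dictated by Remark~\ref{remNgderivative}; this is the standing convention for the Stieltjes derivative and is what makes the difference quotients well defined in the first place.
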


		Naturally, it is possible to extend the product rule for a finite number of functions. To that end, we introduce the following notation: for $n,k\in\bN$ and $(x_1,\dots,x_n)\in \{0,1\}^n$ define 
		\begin{align*}
			\left\lvert (x_1,\dots,x_n)\right\rvert:=\operatorname{card}\{k\in\{1,\dots, n\}\ :\ x_k=1\},\quad F_{n,k}:=\{\sigma\in \{0,1\}^n\ :\ \left\lvert \sigma\right\rvert=k\}.
		\end{align*}
		We also denote $f^{(0)}_g\equiv f$, $f^{(1)}_g\equiv f'_g$.

		\begin{pro}[Product rule] Let $n\in\mathbb N$, $n\ge 2$, and $f_j:[a,b]\to\bF$, $j=1,2,\dots,n$, be $g$-differentiable at $t\in[a,b]$. Then, the product $\prod_{j=1}^nf_j$ is also $g$-differentiable at $t$ and
			\begin{equation}\label{gderivativenproduct}
				\(\prod_{j=1}^nf_j\)_g'(t)=\sum_{k=0}^{n-1}(\Delta g(t^*))^k\(\sum_{\sigma\in F_{n,k+1}}\prod_{j=1}^n(f_j)^{(\sigma_j)}_g(t^*)\).
			\end{equation}
		\end{pro}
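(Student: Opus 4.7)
The plan is to argue by induction on $n$, with the base case $n=2$ being exactly the product rule~\eqref{deltaproducto} from Proposition~\ref{PropStiDer}. Assuming the formula holds for $n$, I would write $\prod_{j=1}^{n+1} f_j = \bigl(\prod_{j=1}^n f_j\bigr) f_{n+1}$ and apply~\eqref{deltaproducto} to this product of two functions, so that
\[
\Bigl(\prod_{j=1}^{n+1} f_j\Bigr)_g'(t)
= \Bigl(\prod_{j=1}^{n} f_j\Bigr)_g'(t)\, f_{n+1}(t^*)
+ \Bigl(\prod_{j=1}^{n} f_j\Bigr)(t^*)\, (f_{n+1})_g'(t)
+ \Bigl(\prod_{j=1}^{n} f_j\Bigr)_g'(t)\, (f_{n+1})_g'(t)\, \Delta g(t^*).
\]
Here I can freely replace $(f_{n+1})_g'(t)$ by $(f_{n+1})_g'(t^*)$: this equality is trivial when $t\notin C_g$ (as then $t^*=t$), and follows from Remark~\ref{remDgCg} and the identity $t^{**}=t^*$ when $t\in C_g$.

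Next I would substitute the inductive expression \eqref{gderivativenproduct} for $\bigl(\prod_{j=1}^n f_j\bigr)_g'(t)$ and observe that $\bigl(\prod_{j=1}^n f_j\bigr)(t^*) = \prod_{j=1}^n f_j(t^*)$. The heart of the argument is then the following combinatorial identity. Splitting any $\sigma \in F_{n+1,k+1}$ according to the value of $\sigma_{n+1}$, one obtains
\[
\sum_{\sigma\in F_{n+1,k+1}}\prod_{j=1}^{n+1}(f_j)^{(\sigma_j)}_g(t^*)
= \Bigl(\sum_{\tau\in F_{n,k+1}}\prod_{j=1}^{n}(f_j)^{(\tau_j)}_g(t^*)\Bigr) f_{n+1}(t^*)
+ \Bigl(\sum_{\tau\in F_{n,k}}\prod_{j=1}^{n}(f_j)^{(\tau_j)}_g(t^*)\Bigr)(f_{n+1})_g'(t^*),
\]
with the convention $F_{n,0}=\{(0,\dots,0)\}$ and $F_{n,n+1}=\emptyset$.

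Finally, I would multiply this by $(\Delta g(t^*))^k$ and sum for $k=0,\dots,n$. The first piece reproduces $(\prod_{j=1}^n f_j)_g'(t)\, f_{n+1}(t^*)$ (the $k=n$ term vanishing because $F_{n,n+1}=\emptyset$); the $k=0$ contribution of the second piece gives $\prod_{j=1}^n f_j(t^*)\cdot(f_{n+1})_g'(t^*)$, while the $k\ge 1$ contributions, after the index shift $k'=k-1$, reassemble into $\Delta g(t^*)\,(\prod_{j=1}^n f_j)_g'(t)\,(f_{n+1})_g'(t^*)$. The sum then matches the right-hand side of~\eqref{deltaproducto} applied to $\prod_{j=1}^n f_j$ and $f_{n+1}$, closing the induction.

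The routine part is the two-term product rule and the verification that $(f_{n+1})_g'(t)=(f_{n+1})_g'(t^*)$; the main obstacle, though purely bookkeeping, is the combinatorial identity splitting $F_{n+1,k+1}$ and matching both halves with a reindexing. I would present it by keeping track of the boundary conventions on $F_{n,0}$ and $F_{n,n+1}$ so that the extreme values $k=0$ and $k=n$ fit uniformly into the sum.
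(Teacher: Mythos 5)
Your proposal is correct and follows essentially the same route as the paper: induction on $n$ via the two-term product rule of Proposition~\ref{PropStiDer}, with the index sets $F_{n+1,k+1}$ split according to the value of the last coordinate $\sigma_{n+1}$ and the resulting sums reindexed to close the induction. The paper runs the same computation in the opposite direction (expanding the three terms of the two-term rule and regrouping into sums over $\{\sigma:\sigma_n=0\}$ and $\{\sigma:\sigma_n=1\}$), but the decomposition and bookkeeping are identical.
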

		\begin{rem}
			Observe that the term $\Delta g(t^*)^k$ might not be properly defined when $k=0$. This is because~\eqref{gderivativenproduct} is just a concise way of writing
			\begin{equation*}
				\(\prod_{j=1}^nf_j\)_g'(t)=\sum_{\sigma\in F_{n,1}}\prod_{j=1}^n(f_j)^{(\sigma_j)}_g(t^*)+\sum_{k=1}^{n-1}(\Delta g(t^*))^k\(\sum_{\sigma\in F_{n,k+1}}\prod_{j=1}^n(f_j)^{(\sigma_j)}_g(t^*)\).
			\end{equation*}
			In other words, in~\eqref{gderivativenproduct} we implicitly treat $\Delta g(t^*)^0$ as $1$. We adopt this notation for the rest of the section.
		\end{rem}
		\begin{proof}
			We prove the result by induction on $n\in\mathbb N$.
%
			For $n=2$, Proposition~\ref{PropStiDer} ensures that $f_1\cdot f_2$ is $g$-differentiable at $t$ and
			\[\(f_1\cdot f_2\)_g'(t)=(f_1)_g^{(1)}(t) f_2^{(0)}(t^*)+f_1^{(0)}(t^*) (f_2)_g^{(1)}(t)+(f_1)_g^{(1)}(t) (f_2)_g^{(1)}(t)\Delta g(t^*).\]
			Now, by definition of $t^*$, we have that $f^{(1)}_g(t)=f^{(1)}_g(t^*)$ and $f^{(2)}_g(t)=f^{(2)}_g(t^*)$, so it follows that~\eqref{gderivativenproduct} holds.

			Assume that the result is true for $n-1$. Then, Proposition~\ref{PropStiDer} guarantees that $\prod_{j=1}^nf_j$ is $g$-differentiable at $t$ and, since~\eqref{gderivativenproduct} holds for $n=2$, omitting the evaluation at $t^*$, 
			we have that
			\begin{align*}
				\(\prod_{j=1}^{n}f_j\)_g'= & 
				\(\prod_{j=1}^{n-1}f_j\)_g^{(1)}\cdot f_n^{(0)}+\(\prod_{j=1}^{n-1}f_j\)^{(0)}\cdot (f_n)_g^{(1)}+\(\prod_{j=1}^{n-1}f_j\)_g^{(1)}\cdot(f_n)_g^{(1)}\cdot\Delta g
				\\
				= &
				\left(\sum_{k=0}^{n-2}(\Delta g)^k\(\sum_{\sigma\in F_{n-1,k+1}}\prod_{j=1}^{n-1}(f_j)^{(\sigma_j)}_g\)\right)\cdot f_n^{(0)}+\(\prod_{j=1}^{n-1}f_k^{(0)}\)\cdot (f_n)_g^{(1)}
				\\ & 
				+\left[\sum_{k=0}^{n-2}(\Delta g)^k\(\sum_{\sigma\in F_{n-1,k+1}}\prod_{j=1}^{n-1}(f_j)^{(\sigma_j)}_g\)\right]\cdot (f_n)_g^{(1)}\cdot \Delta g
				\\
				= & \sum_{k=0}^{n-2}(\Delta g)^k\(\sum_{\substack{\sigma\in F_{n,k+1}\\\sigma_n=0}}\prod_{j=1}^{n}(f_j)^{(\sigma_j)}_g\)+\(\prod_{j=1}^{n-1}f_k^{(0)}\)\cdot (f_n)_g^{(1)} +\sum_{k=0}^{n-2}(\Delta g)^{k+1}\(\sum_{\substack{\sigma\in F_ {n,k+2}\\ \sigma_n=1}}\prod_{j=1}^{n}(f_j)^{(\sigma_j)}_g\)
				\\
				= & \sum_{k=0}^{n-2}(\Delta g)^k\(\sum_{\substack{\sigma\in F_{n,k+1}\\\sigma_n=0}}\prod_{j=1}^{n}(f_j)^{(\sigma_j)}_g\)+\(\prod_{j=1}^{n-1}f_k^{(0)}\)\cdot (f_n)_g^{(1)} +\sum_{k=1}^{n-1}(\Delta g)^{k}\(\sum_{\substack{\sigma\in F_ {n,k+1}\\ \sigma_n=1}}\prod_{j=1}^{n}(f_j)^{(\sigma_j)}_g\)
				\\
				= & \sum_{k=0}^{n-2}(\Delta g)^k\(\sum_{\substack{\sigma\in F_{n,k+1}\\\sigma_n=0}}\prod_{j=1}^{n}(f_j)^{(\sigma_j)}_g\) +\sum_{k=0}^{n-1}(\Delta g)^{k}\(\sum_{\substack{\sigma\in F_ {n,k+1}\\ \sigma_n=1}}\prod_{j=1}^{n}(f_j)^{(\sigma_j)}_g\)\\
				= & \sum_{k=0}^{n-1}(\Delta g)^k\(\sum_{\sigma\in F_{n,k+1}}\prod_{j=1}^{n}(f_j)^{(\sigma_j)}_g\),
			\end{align*}
			where in the last equality we have use the fact that $F_{n,n}=\{(1,\dots,1)\}$ and thus $\sigma_n=1$ for $\sigma\in F_{n,n}$.
		\end{proof}
		\begin{rem}
			Given that $(f_j)'_g(t^*)=(f_j)'_g(t)$, $j=1,2,\dots,n$, $t\in[a,b]$, we can write~\eqref{gderivativenproduct} as
			\[\(\prod_{j=1}^nf_j\)_g'(t)=\sum_{k=0}^{n-1}(\Delta g(t^*))^k\(\sum_{\sigma\in F_{n,k+1}^*}\prod_{j=1}^n(f_j)^{(\sigma_j)}_g(t)\),\]
			where $F_{n,k}^*:=\{\sigma\in \{*,1\}^n\ :\ \left\lvert \sigma\right\rvert=k\}$ with $\left\lvert (\sigma_1,\dots,\sigma_n)\right\rvert:=\operatorname{card}\{k\in\{1,\dots, n\}\ :\ \sigma_k=1\}$ and $f^{(*)}_g(t)\equiv f(t^*)$. Observe that, in this form, for $n=2$, we recover~\eqref{deltaproducto}.
		\end{rem}

		In this last part of the section, we shall focus on the concept of $g$-continuity, as presented in \cite[Definition 3.1]{FriLo17} which we include below. 

%

		\begin{dfn}[$g$-continuous function] A function $f:D\subset\mathbb R \to {\mathbb F}$ is
			\emph{$g$-continuous} at a point $t\in D$,
			or \emph{continuous with respect to $g$} at $t$, if for every $\varepsilon>0$, there exists $\delta>0$ such that 
			\[\left\lvert f(t)-f(s)\right\rvert<\varepsilon,\quad\mbox{ for every $s \in D$ such that $\left\lvert g(t)-g(s)\right\rvert<\delta$};\] 
			otherwise, we say that $f$ is \emph{$g$-discontinuous} at $t$.
			If $f$ is $g$-continuous at every point $t\in D$, we say that $f$ is $g$-continuous on $D$.
		\end{dfn}
		\begin{rem}
			In \cite[Remark 3.10]{MaTo19}, the authors presented the concept of lateral $g$-continuity. We say that $f$ is \emph{$g$-continuous from the left} at $t$ if for every $\varepsilon>0$, there exists $\delta>0$ such that 
			\[\left\lvert f(t)-f(s)\right\rvert<\varepsilon,\quad\mbox{ for every $s \in D$, $s\le t,$ such that $0\le g(t)-g(s)<\delta$}.\]
			Similarly, $f$ is \emph{$g$-continuous from the right} at $t$ if for every $\varepsilon>0$, there exists $\delta>0$ such that 
			\[\left\lvert f(t)-f(s)\right\rvert<\varepsilon,\quad\mbox{ for every $s \in D$, $s\ge t$, such that $0\le g(s)-g(t)<\delta$}.\]
			Naturally, $f$ is $g$-continuous at $t$ if and only if $f$ is $g$-continuous from the left and right at $t$.
		\end{rem}

		\begin{rem}
			It is important to note that, as indicated in \cite[Section 3]{FriLo17}, $g$-continuity can be understood as continuity between pseudometric spaces, which are sequential spaces (see \cite{Arkh}). Thus, we have that $f$ is $g$-continuous at $t\subset D$ if and only if 
			\begin{equation}\label{gseqcont}
				f(t_n)\to f(t),\quad\mbox{for every sequence } \{t_n\}_{n\in\mathbb N}\subset D \mbox{ such that }g(t_n)\to g(t).
			\end{equation}
		\end{rem}

		The following result,\cite[Proposition~3.2]{FriLo17}, describes some properties of $g$-continuous functions that are revelant for the work ahead.
		\begin{pro}\label{proreg} 
			If $f:[a, b] \to \mathbb{F}$ is $g$-continuous on $[a, b],$ then:
			\begin{itemize}
				\item $f$ is continuous from the left at every $t \in(a, b]$;
				\item if $g$ is continuous at $t \in[a, b),$ then so is $f$;
				\item if $g$ is constant on some $[\alpha, \beta] \subset[a, b],$ then so is $f$.
			\end{itemize}
		\end{pro}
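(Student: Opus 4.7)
The proof is essentially a direct unpacking of the definition of $g$-continuity together with the fact that $g$ is itself left-continuous and nondecreasing (being a derivator). For all three items the cleanest approach is to pass through the sequential reformulation~\eqref{gseqcont}.

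For the first bullet, I would fix $t\in(a,b]$ and take any sequence $\{t_n\}_{n\in\bN}\subset[a,b]$ with $t_n\to t^-$. Since $g$ is nondecreasing and left-continuous at $t$, we have $g(t_n)\to g(t^-)=g(t)$. By $g$-continuity of $f$ on $[a,b]$ and~\eqref{gseqcont}, this forces $f(t_n)\to f(t)$, which is exactly left-continuity of $f$ at $t$.

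For the second bullet, if $g$ is continuous at $t\in[a,b)$, then for any sequence $t_n\to t$ (from either side) we have $g(t_n)\to g(t)$, and the same appeal to~\eqref{gseqcont} gives $f(t_n)\to f(t)$, so $f$ is continuous at $t$ in the usual sense.

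For the third bullet, suppose $g$ is constant on $[\alpha,\beta]\subset[a,b]$ and pick any $t,s\in[\alpha,\beta]$. Then $|g(s)-g(t)|=0<\delta$ for every $\delta>0$, so the $\varepsilon$-$\delta$ definition of $g$-continuity of $f$ at $t$ yields $|f(s)-f(t)|<\varepsilon$ for every $\varepsilon>0$, whence $f(s)=f(t)$. Thus $f$ is constant on $[\alpha,\beta]$. None of the three steps hides any real difficulty; the only subtlety to keep in mind is that the hypothesis on $g$ being left-continuous (imposed at the very beginning of Section~2) is what powers the first item, whereas the other two items rely only on the $\varepsilon$-$\delta$/sequential definition applied in trivial ways.
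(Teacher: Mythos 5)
Your proof is correct: all three items follow exactly as you describe, with the first bullet powered by the standing assumption that the derivator $g$ is nondecreasing and left-continuous, and the third bullet being the same argument as Lemma~\ref{tc}. Note that the paper itself gives no proof here — it quotes this as \cite[Proposition~3.2]{FriLo17} — so there is nothing to compare against beyond observing that your direct unpacking of the $\varepsilon$-$\delta$ and sequential definitions is the standard argument.
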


		 In \cite{FriLo17} the reader can find more information on the properties of $g$-continuous functions. In the following, we present some results that, to the best of our knowledge, are not available in the current literature.

		\begin{lem}\label{lemdiv}
			Let $f,h:D\subset \mathbb R\to\bF$ be $g$-con\-tin\-u\-ous at $t\in[a,b]$ and $h(t)\ne 0$. Then $f/h$ is $g$-con\-tin\-u\-ous at $t$.
		\end{lem}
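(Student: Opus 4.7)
The plan is to reduce this to the sequential characterization of $g$-continuity recalled in the remark with~\eqref{gseqcont}: $u$ is $g$-continuous at $t$ if and only if $u(t_n)\to u(t)$ for every sequence $\{t_n\}\subset D$ with $g(t_n)\to g(t)$. Once we have the sequential picture, the result reduces to ordinary quotient-of-limits in $\mathbb F$.

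The first step will be to verify that $f/h$ is well-defined on a suitable $g$-neighborhood of $t$. Applying the $g$-continuity of $h$ with $\varepsilon=|h(t)|/2>0$, I obtain $\delta_0>0$ such that $|h(s)-h(t)|<|h(t)|/2$ for every $s\in D$ with $|g(s)-g(t)|<\delta_0$, and hence $|h(s)|>|h(t)|/2>0$ on this set. In particular, any sequence $\{t_n\}\subset D$ with $g(t_n)\to g(t)$ eventually satisfies $h(t_n)\ne 0$, so $(f/h)(t_n)$ is defined for all large~$n$.

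The second step is the passage to the limit. For such a sequence, $g$-continuity of $f$ and $h$ together with~\eqref{gseqcont} yield $f(t_n)\to f(t)$ and $h(t_n)\to h(t)$. Since $h(t)\ne 0$, the usual limit laws in $\mathbb F$ give
\[\frac{f(t_n)}{h(t_n)}\longrightarrow\frac{f(t)}{h(t)},\]
and invoking~\eqref{gseqcont} once more in the reverse direction shows that $f/h$ is $g$-continuous at~$t$.

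I do not anticipate any real obstacle; the only point requiring care is the one dealt with in the first step, namely ensuring that $h$ stays bounded away from zero in a $g$-pseudometric neighborhood of $t$ so that $f/h$ is meaningful there. A direct $\varepsilon$-$\delta$ argument based on the identity
\[\frac{f(s)}{h(s)}-\frac{f(t)}{h(t)}=\frac{(f(s)-f(t))\,h(t)-f(t)\,(h(s)-h(t))}{h(s)\,h(t)},\]
combined with the lower bound $|h(s)|\ge|h(t)|/2$ from the first step, would give an equally short alternative proof.
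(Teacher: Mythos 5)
Your proof is correct, but it follows a different route from the paper's. The paper argues directly with an $\varepsilon$--$\delta$ estimate: it first uses $g$-continuity of $h$ with $\varepsilon=\left\lvert h(t)\right\rvert/2$ to get the lower bound $\left\lvert h(s)\right\rvert>\left\lvert h(t)\right\rvert/2$ on a $g$-neighborhood of $t$, then chooses $\delta$ so that both $\left\lvert f(s)-f(t)\right\rvert$ and $\left\lvert h(s)-h(t)\right\rvert$ are smaller than $\frac{\left\lvert h(t)\right\rvert^2}{2(\left\lvert h(t)\right\rvert+\left\lvert f(t)\right\rvert)}\varepsilon$, and concludes via exactly the algebraic identity you mention at the end of your proposal. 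You instead route the argument through the sequential characterization~\eqref{gseqcont}, which the paper has justified by observing that $g$-continuity is continuity between pseudometric (hence sequential) spaces; this lets you offload the quotient estimate onto the standard limit laws in $\mathbb F$ and keeps the proof free of explicit constants. Both approaches need the same preliminary step --- bounding $h$ away from zero on a $g$-neighborhood of $t$ so that $f/h$ is defined there --- and you handle that correctly. The trade-off is that the paper's version is self-contained and quantitative, while yours is shorter but leans on the sequential-space remark; since that remark is already part of the paper's toolkit, your argument is a legitimate and complete alternative.
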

		\begin{proof}
			Let $t\in D$ and $\e>0$. Since $h$ is $g$-con\-tin\-u\-ous at $t$, there exists $\d_1\in\bR^+$ such that 
			\[\left\lvert h(s)-h(t)\right\rvert<\left\lvert h(t)\right\rvert/2,\quad \mbox{for }s\in D \mbox{ such that } \left\lvert g(s)-g(t)\right\rvert<\d_1.\] 
			Hence, $\left\lvert h(t)\right\rvert/2<h(s)<3\left\lvert h(t)\right\rvert/2$ for every $s\in D$ such that $\left\lvert g(s)-g(t)\right\rvert<\d_1$. 
			Now, since $f,h$ are $g$-continuous and $h(t)\ne 0$, we can find $\d\in(0,\d_1)$ such that \[\left\lvert f(s)-f(t)\right\rvert,\left\lvert h(s)-h(t)\right\rvert<\frac{\left\lvert h(t)\right\rvert^2}{2(\left\lvert h(t)\right\rvert+\left\lvert f(t)\right\rvert)}\e,\quad \mbox{for }s\in D\mbox{ such that } \left\lvert g(s)-g(t)\right\rvert<\d.\] 
			Thus, for every $s\in D$ such that $\left\lvert g(s)-g(t)\right\rvert<\d$,
			\[\left\lvert \frac{f(s)}{h(s)}-\frac{f(t)}{h(t)}\right\rvert= \frac{\left\lvert f(s)h(t)-f(t)h(s)\right\rvert}{\left\lvert h(t)h(s)\right\rvert}\le\frac{\left\lvert f(s)-f(t)\right\rvert\left\lvert h(t)\right\rvert+\left\lvert f(t)\right\rvert\left\lvert h(s)-h(t)\right\rvert}{\frac{1}{2}\left\lvert h(t)\right\rvert^2} \le \e,
			\]
			that is, $f/h$ is $g$-con\-tin\-u\-ous at $t$.
		\end{proof}

		\begin{lem}\label{tc}
			Let $f:D\subset \mathbb R\to\mathbb F$ and $t_1,t_2\in D$ be such that $g(t_1)=g(t_2)$. Then, $f$ is $g$-continuous at $t_1$ if and only if $f$ is $g$-continuous at $t_2$. Furthermore, in that case, we have that $f(t_1)=f(t_2)$.
%
	\end{lem}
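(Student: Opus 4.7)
The plan is to deduce everything from the observation that the hypothesis $g(t_1)=g(t_2)$ makes the $\varepsilon$--$\delta$ condition for $g$-continuity at $t_1$ and at $t_2$ essentially the same condition, combined with the trick of testing continuity at $t_1$ against the point $t_2$ itself.

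First I would prove the equality $f(t_1)=f(t_2)$ under the assumption that $f$ is $g$-continuous at (say) $t_1$. Fix $\varepsilon>0$; by definition there exists $\delta>0$ such that $|f(s)-f(t_1)|<\varepsilon$ for every $s\in D$ with $|g(s)-g(t_1)|<\delta$. Choosing $s=t_2\in D$ gives $|g(t_2)-g(t_1)|=0<\delta$, hence $|f(t_2)-f(t_1)|<\varepsilon$; letting $\varepsilon\to 0^{+}$ yields $f(t_1)=f(t_2)$. (Equivalently, one can use the sequential characterization \eqref{gseqcont} with the constant sequence $t_n\equiv t_2$.)

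Next I would transfer the $g$-continuity from $t_1$ to $t_2$. Given $\varepsilon>0$, take the same $\delta>0$ as above. For any $s\in D$ with $|g(s)-g(t_2)|<\delta$, the identity $g(t_1)=g(t_2)$ gives $|g(s)-g(t_1)|<\delta$, so $|f(s)-f(t_1)|<\varepsilon$; combined with $f(t_1)=f(t_2)$ this yields $|f(s)-f(t_2)|<\varepsilon$. Therefore $f$ is $g$-continuous at $t_2$. The reverse implication follows by interchanging the roles of $t_1$ and $t_2$, which completes the equivalence.

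No real obstacle is expected here: the statement is essentially the assertion that $g$-continuity only sees $t$ through $g(t)$, so one only needs to be careful to prove $f(t_1)=f(t_2)$ \emph{before} (or as part of) showing the equivalence, since otherwise the $\varepsilon$--$\delta$ argument for $t_2$ produces estimates on $|f(s)-f(t_1)|$ rather than on $|f(s)-f(t_2)|$.
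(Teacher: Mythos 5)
Your proposal is correct and follows essentially the same route as the paper: both first establish $f(t_1)=f(t_2)$ by testing the $\varepsilon$--$\delta$ condition at $t_1$ with $s=t_2$, and then note that the equivalence of $g$-continuity at the two points is immediate from the definition once the values agree. You merely spell out the transfer step that the paper leaves implicit.
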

	\begin{proof}
		Let $t_1,t_2\in D$ be such that $g(t_1)=g(t_2)$. Observe that it is enough to prove that if $f$ is $g$-continuoous at $t_1$ then $f(t_1)=f(t_2)$ as, in that case, the equivalence for the $g$-continuity follows from the definition.

		 Since $f$ is $g$-continuous at $t_1$, for every $\varepsilon>0$, there exists $\d>0$ such that if $s\in D$ satisfies $\left\lvert g(s)-g(t_1)\right\rvert<\d$, then $\left\lvert f(s)-f(t_1)\right\rvert<\e$. Hence, given that $g(t_1)=g(t_2)$ we have that $\left\lvert f(t_2)-f(t_1)\right\rvert<\e$ for every $\varepsilon>0$ so, necessarily, $f(t_1)=f(t_2)$, which finishes the proof.
%
	\end{proof}
%
%


		It is important to note that, as presented in \cite[Example~3.3]{FriLo17}, $g$-continuous functions need not be regulated or even locally bounded. With this idea in mind, given $a,b\in\mathbb R$, $a<b$, we define
		\[
		\mathcal{BC}_g([a,b],\mathbb F)=\left\{f:[a,b]\to\mathbb F: f\mbox{ bounded and $g$-continuous on }[a,b]\right\}.
		\]
		Similarly, 	\cite[Example~3.23]{MarquezTesis} shows that, unlike in the usual setting, $g$-differentiability does not imply $g$-continuity, which lead to the following definition.

		\begin{dfn}[{\cite[Definition 3.12]{Fernandez2021}}]
			Given $a,b\in\mathbb R$, $a<b$, such that $a\notin N_g^-\cup D_g$ and $b\notin C_g\cup N_g^+\cup D_g$, we define
			\[
				\mathcal{BC}_g^1([a,b],{\mathbb F}):=\{f\in \mathcal{BC}_g([a,b],{\mathbb F}): f'_g \in \mathcal{BC}_g([a,b],{\mathbb F})\}.
			\]
		\end{dfn}

	\section{Some properties of $\Delta g$}

	We now turn our attention to the study of the function $\Delta g:\mathbb R\to\mathbb R$. 
	We start with a simple result showing 
	 that it is a regulated function, which is enough to ensure measurability with respect to $g$.
	\begin{pro}\label{regulated} 
		 For each $t\in\mathbb R$, we have that
		\begin{equation}\label{Deltalimit}
			\lim_{s\to t^-}\Delta g(t)=\lim_{s\to t^+}\Delta g(t)=0.
		\end{equation}
		In particular, $\Delta g$ is a regulated function, Borel-measurable and $g$-measurable.
	\end{pro}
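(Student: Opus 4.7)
The displayed equation has what appears to be a typographical slip: the expression inside the limits cannot be the constant $\Delta g(t)$, so the intended statement is that $\lim_{s\to t^-}\Delta g(s)=\lim_{s\to t^+}\Delta g(s)=0$ for every $t\in\mathbb R$. I will write the proof under that reading.

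The plan is to exploit the fact that the nondecreasing function $g$ has bounded variation on any bounded interval, and hence the total mass of its jumps there is finite. More precisely, I would fix $t\in\mathbb R$ and $\varepsilon>0$ and argue by contradiction for the left limit: if $\limsup_{s\to t^-}\Delta g(s)\ge\varepsilon$, then I can extract a strictly increasing sequence $s_1<s_2<\cdots<t$ with $\Delta g(s_n)\ge\varepsilon$ for all $n$. Since $g$ is nondecreasing, for each $n$ one has $g(s_{n+1})\ge g(s_n^+)\ge g(s_n)+\varepsilon$, so by telescoping $g(s_n)\ge g(s_1)+(n-1)\varepsilon$. Together with $g(s_n)\le g(t)<\infty$ this forces $n$ to be bounded, a contradiction. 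The same argument, applied to a decreasing sequence $s_n\searrow t$ with $s_n>t$ and using $g(s_{n+1})\le g(s_n)-\varepsilon$ after swapping the roles, yields the right-hand limit. Hence both one-sided limits of $\Delta g$ exist at $t$ and equal $0$.

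From the existence of these one-sided limits at every point, it follows immediately from the definition of a regulated function that $\Delta g$ is regulated. Every regulated function on an interval can be approximated uniformly on compact sets by step functions (a standard consequence of the existence of one-sided limits: partition the interval using the points where the oscillation exceeds a given threshold, of which there are only finitely many in any compact subinterval by the argument above), so $\Delta g$ is the pointwise limit on $\mathbb R$ of a sequence of Borel-measurable step functions, hence itself Borel-measurable. Finally, since $\mu_g$ is a Borel measure as recalled in the preliminaries, every Borel-measurable function is $g$-measurable, so $\Delta g$ is $g$-measurable as claimed.

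I expect the only non-routine step to be the vanishing of the one-sided limits; once that is established, the ``regulated $\Rightarrow$ Borel $\Rightarrow$ $g$-measurable'' chain is standard and can be stated in a single sentence.
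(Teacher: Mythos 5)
Your proof is correct, and you are right that the displayed formula contains a typo (the paper clearly intends $\Delta g(s)$ inside the limits). Your route differs from the paper's in how it certifies that jumps of size $\ge\varepsilon$ cannot accumulate at $t$. The paper splits the one-sided limit into the limit along $s\in D_g$ and along $s\notin D_g$ (using that $D_g$ is countable so its complement is dense, and that $\Delta g=0$ off $D_g$), and then controls the first limit by observing that $\sum_n\Delta g(t_n)\le\mu_g([t,t+1))<\infty$ for any sequence $t_n\in D_g$ approaching $t$ from the right, so the terms of a convergent series must tend to $0$. You instead run a direct contradiction: a strictly monotone sequence $s_n\to t^-$ with $\Delta g(s_n)\ge\varepsilon$ forces $g(s_{n+1})\ge g(s_n^+)\ge g(s_n)+\varepsilon$, hence $g(s_n)\ge g(s_1)+(n-1)\varepsilon$, contradicting $g(s_n)\le g(t)$. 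Both arguments rest on the same underlying fact --- the total jump mass of a nondecreasing function on a bounded interval is finite --- but yours is more elementary (no Lebesgue--Stieltjes measure, no density/case-splitting on $D_g$), while the paper's phrasing keeps the computation inside the $\mu_g$ framework it uses throughout. Your concluding chain ``regulated $\Rightarrow$ Borel-measurable $\Rightarrow$ $g$-measurable'' matches the paper's, which states it without the approximation-by-step-functions justification you sketch; that justification is standard and correct.
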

	\begin{proof}
	In order to show that~\eqref{Deltalimit} holds, let $t\in\mathbb R$. We first show that $\Delta g(t^+)=0$. 

	If $t\not\in (D_g\cap(t,+\infty))'$, then there exists $r>0$ such that $D_g\cap(t,+\infty)\cap(t-r,t+r)=D_g\cap(t,t+r)=\emptyset$, so $g$ is continuous on $(t,t+r)$, which implies that $\Delta g=0$ on that same interval and the result follows. Otherwise, we have that $t\in (D_g\cap(t,+\infty))'$ and we need to show that $\Delta g(t^+)=0$. This happens if and only if
	\[\lim_{\substack{s\to t^+\\ s\in D_g}}\Delta g(t)=\lim_{\substack{s\to t^+\\ s\not\in D_g}}\Delta g(t)=0,\]
	where the second limit can be taken because $D_g$ is countable, so $\mathbb R\backslash D_g$ is dense in $\mathbb R$.
	However, noting that $\Delta g=0$ on $\mathbb R\backslash D_g$, we have that the second of these limits is null, so it is enough to show that
	\begin{equation*}
		\lim_{\substack{s\to t^+\\ s\in D_g}}\Delta g(t)=0.
	\end{equation*}

	Let $\{t_n\}_{n\in\mathbb N}$ be a sequence in $D_g\cap (t,+\infty)$ converging to $t$. Then, there exists $N\in\mathbb N$ such that $t_n\in (t,t+1)$ for all $n\in\mathbb N$ such that $n\ge N$. Hence,
	\[0<\sum_{n=N}^\infty \Delta g(t_n)\le \sum_{s\in (t,t+1)\cap D_g} \Delta g(s)= \int_{(t,t+1)\cap D_g} \dif g(s)\le \int_{[t,t+1)}\dif g(s)=\mu_g([t,t+1))<+\infty.\]
	As a consequence, the series $\sum_{n\in\mathbb N}\Delta g(t_n)$ is convergent, which guarantees that the sequence $\{\Delta g(t_n)\}_{n\in\mathbb N}$ must converge to $0$. Since $\{t_n\}_{n\in\mathbb N}$ was arbitrarily chosen, $\Delta g(t^+)=0$.

	The reasoning for $\Delta g(t^-)$ is analogous and we omit it. Observe that this is enough to guarantee that $\Delta g$ is regulated, which in turn makes it Borel measurable and, thus, $g$-measurable.
	\end{proof}

Taking into account the previous result it is only natural to wonder what these limits imply with regard to the $g$-continuity of $\Delta g$. This is covered in the next result.

%
%

\begin{pro}\label{proddg}
	Consider the set
	\begin{equation}\label{Ag}
		A_g=\{t\in\mathbb R: \sup\{s\in [a,b]\ :\ g(s)=g(t)\}\in D_g\}.
	\end{equation}
	The map $\Delta g$ is $g$-continuous at every point of 
$\mathbb R\backslash A_g$
	 and $g$-discontinuous at every point of $A_g$.
	\end{pro}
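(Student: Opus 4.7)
The plan is, for each $t\in\bR$, to describe the level set $I_t:=\{s\in[a,b]:g(s)=g(t)\}$, which by monotonicity of $g$ is an interval with endpoints $\alpha(t)=\inf I_t$ and $\beta(t)=\sup I_t$. Then the condition $t\in A_g$ is, by definition, exactly $\beta(t)\in D_g$. The whole proof runs on a case analysis of the position of a test point $s$ relative to $[\alpha(t),\beta(t)]$, combined with the lateral limits $\Delta g(t^\pm)=0$ from Proposition~\ref{regulated} and the rigidity given by Lemma~\ref{tc}.

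For the discontinuity part, I would assume $t\in A_g$ and set $\beta=\beta(t)\in D_g$, so $\Delta g(\beta)>0$. If $t<\beta$, then $g$ is constant $=g(t)$ on $[t,\beta]$, which forces $g(t^+)=g(t)$, whence $t\notin D_g$ and $\Delta g(t)=0\ne\Delta g(\beta)$; since $g(t)=g(\beta)$, Lemma~\ref{tc} immediately forbids $g$-continuity of $\Delta g$ at $t$. If $t=\beta$, I would split according to whether $\alpha(t)<t$ or $\alpha(t)=t$: in the first subcase pick any $s_0\in(\alpha(t),t)\subset C_g$, so that $g(s_0)=g(t)$ and $\Delta g(s_0)=0\ne\Delta g(t)$, and apply Lemma~\ref{tc} again; in the second subcase $I_t=\{t\}$, and any sequence $s_n\nearrow t$ in the usual topology satisfies $g(s_n)\to g(t^-)=g(t)$ by left-continuity of $g$ while $\Delta g(s_n)\to 0$ by Proposition~\ref{regulated}, contradicting $\Delta g(t)>0$.

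For the continuity part, let $t\notin A_g$, so $\beta(t)\notin D_g$. First I would verify that $\Delta g(t)=0$: indeed, $t\in D_g$ would give $\beta(t)=t\in D_g$, contradicting $t\notin A_g$. Now take any sequence $\{s_n\}$ with $g(s_n)\to g(t)$; by passing to subsequences it suffices to handle three mutually exclusive situations. If $g(s_n)=g(t)$ for all $n$, then $s_n\in I_t\subset[\alpha(t),\beta(t)]$, and in each possibility ($s_n$ inside the open interval of constancy, at $\alpha(t)$, or at $\beta(t)$) a short argument shows either $s_n\in C_g$ or $s_n\notin D_g$, so $\Delta g(s_n)=0$. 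If $g(s_n)<g(t)$, monotonicity and the identity $g(\alpha(t)^+)=g(t)$ force $s_n<\alpha(t)$, and then $g(s_n)\to g(\alpha(t)^-)=g(\alpha(t))=g(t)$ together with the strict monotonicity of $g$ just to the left of $\alpha(t)$ yield $s_n\to\alpha(t)^-$ in the ambient topology, so Proposition~\ref{regulated} gives $\Delta g(s_n)\to 0$. The case $g(s_n)>g(t)$ is symmetric: monotonicity gives $s_n>\beta(t)$, and the crucial input $\beta(t)\notin D_g$ provides $g(\beta(t)^+)=g(\beta(t))=g(t)$, which forces $s_n\to\beta(t)^+$ and hence $\Delta g(s_n)\to 0$ again by Proposition~\ref{regulated}.

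The main obstacle is the bookkeeping in these last two subcases: one must argue carefully that convergence $g(s_n)\to g(t)$ in the pseudometric induced by $g$ in fact forces the ordinary-topology convergence $s_n\to\alpha(t)^-$ or $s_n\to\beta(t)^+$, so that the lateral limits of Proposition~\ref{regulated} can be invoked. This step leans on the strict monotonicity of $g$ just outside the level interval $[\alpha(t),\beta(t)]$ and, decisively, on the hypothesis $\beta(t)\notin D_g$, which prevents a jump from obstructing the right-hand approach.
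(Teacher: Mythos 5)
Your argument is correct and rests on the same two engines as the paper's proof --- the vanishing lateral limits of $\Delta g$ from Proposition~\ref{regulated} and the rigidity of Lemma~\ref{tc} --- but it is organized quite differently. The paper runs its case analysis over the point-type taxonomy of Remark~\ref{pointcharact} (whether $t$ lies in $D_g$, $N_g^\pm$, $C_g$, and which of~\eqref{cond1}/\eqref{cond2} holds), proves the lateral-continuity claims~\eqref{Deltaleftcont}--\eqref{Deltarightcont} by $\varepsilon$-$\delta$ arguments, and then stitches them together at the points of $N_g\cup C_g$ via the auxiliary points $\widehat t$ and $\widetilde t$. You instead organize everything around the level interval $[\alpha(t),\beta(t)]$ of $I_t=\{s\in[a,b]:g(s)=g(t)\}$ and the sequential characterization~\eqref{gseqcont}: for discontinuity you dispatch the cases $t<\beta(t)$ and $\alpha(t)<t=\beta(t)$ uniformly with Lemma~\ref{tc} (the paper does the first of these by hand for $t\in D_g$ under~\eqref{cond3}), and for continuity you split a test sequence by the sign of $g(s_n)-g(t)$, noting that $g(s_n)=g(t)$ traps $s_n$ in the level interval where $\Delta g$ vanishes (this is exactly where $\beta(t)\notin D_g$ enters), while $g(s_n)<g(t)$ or $g(s_n)>g(t)$ forces ordinary convergence $s_n\to\alpha(t)^-$ or $s_n\to\beta(t)^+$, so Proposition~\ref{regulated} applies. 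Your route is more economical and makes the role of $\sup I_t\in D_g$ more transparent; the paper's route yields the reusable lateral statements~\eqref{Deltaleftcont}--\eqref{Deltarightcont} as by-products. Two details worth tightening: in the subcase $g(s_n)<g(t)$, equality $s_n=\alpha(t)$ can occur only when $g(\alpha(t))<g(t)$, and then no such subsequence can satisfy $g(s_n)\to g(t)$, so that configuration is vacuous rather than excluded by monotonicity alone; and the ``strict monotonicity just outside $[\alpha(t),\beta(t)]$'' you invoke is precisely the observation that any $s\in[a,b]$ with $g(s)=g(t)$ already lies in $[\alpha(t),\beta(t)]$, which deserves an explicit line. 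Like the paper, you implicitly take $t$ and the test points in $[a,b]$ (so that $t\in I_t$); this matches the intended scope of the statement.
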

\begin{proof}
	Throughout this proof, given $t\in\mathbb R$, we denote
	\begin{equation}\label{hatt}
		\widehat t=\sup\{s\in [a,b]\ :\ g(s)=g(t)\}.
	\end{equation}

	First, we prove that $\Delta g$ is $g$-discontinuous at every point of $D_g$. To that end, let $t\in D_g$. Then, either~\eqref{cond1} holds or
	\begin{equation}\label{cond3}
		g(s)=g(t),\quad\mbox{for all }s\in[t_0,t]\mbox{ for some }t_0<t.
	\end{equation}

	Suppose~\eqref{cond3} holds. Then $\Delta g(t_0)=g(t_0^+)-g(t_0)=g(t)-g(t)=0$, while $\Delta g(t)> 0$, so for every $\d\in\bR^+$ we have that $\left\lvert g(t_0)-g(t)\right\rvert<\delta$ but $\left\lvert \Delta g(t_0)-\Delta g(t)\right\rvert=\Delta g(t)> \Delta g(t)/2$, so $\Delta g$ is not $g$-con\-tin\-u\-ous at $t$.

	If~\eqref{cond1} holds instead, since $\lim_{s\to t^-}\Delta g(s)=0$, there exists $\rho>0$ such that \[\Delta g(s)<\frac{\Delta g(t)}{2},\quad s\in\bR,\ 0< t-s<\rho.\] 
	Since~\eqref{cond1} holds, so we can find $t_1<t$ such that $t-t_1<\rho$ and $g(t_1)<g(t)$. Let $\d=g(t)-g(t_1)$. Observe that, if $0< g(t)-g(s)<\d$, since $g$ is nondecreasing, then $0<t-s<t-t_1<\rho$, and so $\Delta g(s)<\Delta g(t)/2$. But this implies that, if $0< g(t)-g(s)<\d$, then
	\[\left\lvert \Delta g(s)-\Delta g(t)\right\rvert\ge \Delta g(t)-\Delta g(s)>\Delta g(t)-\frac{\Delta g(t)}{2}=\frac{\Delta g(t)}{2},\]
	and, therefore, $\Delta g$ is not continuous at $t$.

	Now, let $t\in A_g\backslash D_g$. Since $g$ is left-continuous, we have that $g(t)=g(\widehat t)$ so it follows from Lemma~\ref{tc} that $\Delta g$ cannot be $g$-continuous at $t$ since it is $g$-discontinuous at $\widehat t$ as $\widehat t\in D_g$.

	Now, we shall study the $g$-continuity of $\Delta g$ in the remaining points. Let $t\in \mathbb R\backslash A_g$. In that case, $t\not\in D_g$ so $\Delta g(t)=0$. We claim that 
	\begin{align}
		\Delta g&\mbox{ is $g$-continuous from the left at $t$ if~\eqref{cond1} holds};\label{Deltaleftcont}\\
		\Delta g&\mbox{ is $g$-continuous from the right at $t$ if~\eqref{cond2} holds}.\label{Deltarightcont}
	\end{align}

	Suppose that~\eqref{cond1} holds and let $\varepsilon>0$. Since $\lim_{s\to t^-}\Delta g(s)=0$ and $\Delta g(t)=0$, there exists $\rho>0$ such that $\Delta g(s)<\varepsilon$, for all $s\in(t-\rho,t]$.
	Since~\eqref{cond1} holds, we can take $\delta=g(t)-g(t-\rho)>0$. If $s\le t$ is such that $0\le g(t)-g(s)<\delta$, since $g$ is nondecreasing, we necessarily have that $s\in(t-\rho, t]$ so
		\[\left\lvert \Delta g(s)-\Delta g(t)\right\rvert=\Delta g(s)<\varepsilon,\]
	which proves that $\Delta g$ is also $g$-continuous from the left at $t$.

	The proof of~\eqref{Deltarightcont} is analogous, and we omit it. Since~\eqref{Deltaleftcont}-\eqref{Deltarightcont} hold, it follows that $\Delta g$ is $g$-continuous if $t\not\in C_g\cup N_g\cup A_g$. Similarly,~\eqref{Deltaleftcont} guarantees that $\Delta g$ is $g$-continuous from the left if $t\in N_g^-\backslash A_g$, while~\eqref{Deltarightcont} ensures the $g$-continuity from the right if $t\in N_g^+\backslash A_g$. 

	Let us investigate what happens in the remaining cases, namely, when $t\in (C_g \cup N_g)\backslash A_g$.

	Suppose $t\in (N_g^-\cup C_g)\backslash A_g$ and let us show that $\Delta g$ is $g$-continuous from the right at $t$.
	Let $\varepsilon>0$ and consider $\widehat t$ as in~\eqref{hatt}.
	Observe that $g(s)=g(t)$ for all $s\in[t,\widehat t)$, which means $\Delta g(s)=0$ for all $s\in[t,\widehat t)$.
	Furthermore, since $t\not\in A_g$, we have that $\widehat t\not\in D_g$, so we necessarily have that $g(\widehat t)=g(t)$ and $\widehat t\in N_g^+$. In that case, $\Delta g$ is $g$-continuous from the right at $\widehat t$, which means that there exists $\delta>0$ such that
	\[\left\lvert \Delta g(s)\right\rvert=\left\lvert \Delta g(\widehat t)-\Delta g(s)\right\rvert<\varepsilon,\quad\mbox{ for each $s\ge \widehat t$ such that $0\le g(s)-g(\widehat t)<\delta$}.\]
	Now, if $s\ge t$ is such that $0\le g(s)-g(t)<\delta$, then either $s\in[t,\widehat t)$, or $s\ge \widehat t$ and $0\le g(s)-g(\widehat t)<\delta$. In either case, it follows that
	\[\left\lvert \Delta g(t)-\Delta g(s)\right\rvert=\left\lvert \Delta g(s)\right\rvert<\varepsilon,\]
	proving that $\Delta g$ is $g$-continuous from the right at $t$. In particular, this proves that $\Delta g$ is $g$-continuous if $t\in N_g^-$.

		Finally, suppose $t\in (N_g^+\cup C_g)\backslash A_g$ and let us show that $\Delta g$ is $g$-continuous from the left at $t$. Let $\varepsilon>0$ and consider
		\[\widetilde t=\inf\{s\in [a,b]\ :\ g(s)=g(t)\}.\]
		Observe that $g(s)=g(t)$ for all $s\in(\widetilde t,t]$, which means $\Delta g(s)=0$ for all $s\in(\widetilde t,t]$. Now, if $\widetilde t\not\in D_g$, then $g(\widetilde t)=g(t)$ and $\widetilde t\in N_g^-$. In that case, $\Delta g$ is $g$-continuous from the left at $\widetilde t$ so there exists $\delta>0$ such that
		\[\left\lvert \Delta g(s)\right\rvert=\left\lvert \Delta g(\widetilde t)-\Delta g(s)\right\rvert<\varepsilon,\quad\mbox{ for each $s\le \widehat t$ such that $0\le g(\widetilde t)-g(s)<\delta$}.\]
		Now, if $s\le t$ is such that $0\le g(t)-g(s)<\delta$, then either $s\in(\widetilde t,t]$, or $s\le \widetilde t$ and $0\le g(\widetilde t)-g(s)<\delta$. In either case, it follows that
		\[\left\lvert \Delta g(t)-\Delta g(s)\right\rvert=\left\lvert \Delta g(s)\right\rvert<\varepsilon.\]
		Otherwise, $\widetilde t\in D_g$, so taking $\delta\in(0,\Delta g(\widetilde t))$, it follows that if
		$s\le t$ is such that $0\le g(t)-g(s)<\delta$, then $s\in(\widetilde t,t]$, so
		\[\left\lvert \Delta g(t)-\Delta g(s)\right\rvert=0<\varepsilon,\]
		which finishes the proof. 
	\end{proof}

%

\begin{exa}~\label{exaa} To illustrate the set $A_g$, consider the function
represented in Figure~\ref{A}. Here $A_g=\{-1\}\cup[0,2]$.
\begin{figure}[h]
	\centering
	\includegraphics[width=.5\textwidth]{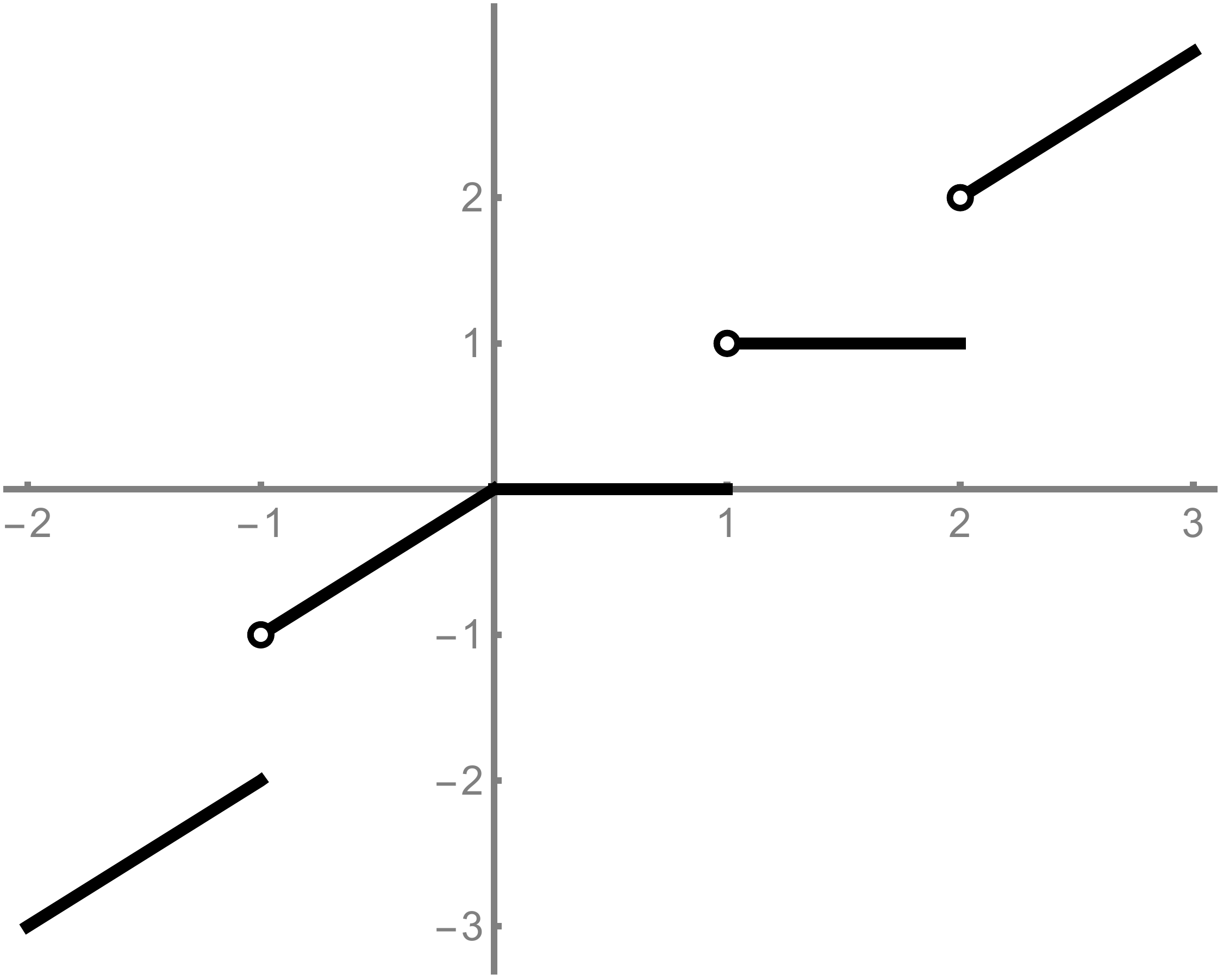}
	\caption{The graph of function $g$ in Example~\ref{exaa}.}
	\label{A}
\end{figure}
\end{exa}

	We now consider the $g$-differentiability of $\Delta g$. In order to do so, we present the following more general result from which we can deduce some information about the differentiability in the Stieltjes sense of $\Delta g$.
\begin{pro}\label{propfDeltadif}
	Let $f:[a,b]\to\mathbb F$ be a map and $h:[a,b]\to\mathbb F$ be defined as $h(t)=f(t)\Delta g(t)$.
		Consider the sets
		\begin{align}
			D_1&=\{t\in[a,b]\cap N_g^-: t\in (D_g\cap[a,t))'\},\label{defD1}\\
			D_2&=\{t\in[a,b]\cap N_g^+: t\in (D_g\cap(t,b])'\},\label{defD2}\\
			D_3&=\{t\in[a,b]\backslash (N_g\cup D_g)
			: t\in (D_g\cap[a,b])'\}.\label{defD3}
		\end{align}		
	For $t\in[a,b]$, and denoting by $t^*$ the corresponding point as in~\eqref{tstar}, we have the following properties:
	\begin{itemize}
		\item[\textup{1.}] If $t^*\in D_1\cup D_2\cup D_3$, then $h$ is $g$-differentiable at $t$ if and only if
		\begin{equation}\label{condfDeltaderivable1}
			\lim_{\substack{s\to t^*\\ s\in D_g}}\frac{f(s)\Delta g(s)}{g(s)-g(t)}=0,
		\end{equation}
		where we might be considering the corresponding side limit according to the definition of Stieltjes derivative, see \textup{Remark~\ref{remNgderivative}}.
		\item[\textup{2.}] If $t^*\in D_g\cap (D_g\cap(t,b])'$, then $h$ is $g$-differentiable at $t$ if and only if
		\begin{equation*}
			\lim_{\substack{s\to t^{*+}\\ s\in D_g}}f(s)\Delta g(s)=0.
		\end{equation*}
		\item[\textup{3.}] In any other case, $h$ is $g$-differentiable at $t$.
	\end{itemize}
	Moreover, if $h$ is $g$-differentiable at $t$, then
	\begin{equation}\label{hgder}
		h'_g(t)=-f(t^*)\irchi_{D_g}(t^*).
	\end{equation}
\end{pro}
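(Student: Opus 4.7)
The plan is to first reduce the problem to analyzing $g$-differentiability at $t^*$ rather than at $t$. By the lemma preceding Proposition~\ref{PropStiDer}, $g(t)=g(t^*)$; and by Remark~\ref{remDgCg}, if $t\in C_g$, then $h'_g(t)$ exists iff $h'_g(t^*)$ does, with equal values. So I may assume throughout that $t=t^*$, so that $t^*\not\in C_g$ and we are in the $D_g$ or $\mathbb R\backslash(C_g\cup D_g)$ branches of the Stieltjes derivative (with the one-sided interpretation of Remark~\ref{remNgderivative} when $t^*\in N_g$ or $t^*\in\{a,b\}$).

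\textbf{Case $t^*\in D_g$.} By Remark~\ref{remDgCg}, $h$ is $g$-differentiable at $t^*$ iff $h(t^{*+})$ exists, and then $h'_g(t^*)=(h(t^{*+})-h(t^*))/\Delta g(t^*)$. I split into two subcases. If $t^*\notin (D_g\cap(t^*,b])'$ then on a right-neighborhood of $t^*$ the function $g$ has no jumps, so $\Delta g\equiv 0$ there, hence $h\equiv 0$ right of $t^*$, giving $h(t^{*+})=0$ and $h'_g(t^*)=-f(t^*)\Delta g(t^*)/\Delta g(t^*)=-f(t^*)$; this falls under conclusion~3. If $t^*\in D_g\cap(D_g\cap(t^*,b])'$, I use that $\mathbb R\backslash D_g$ is dense to split the limit into the parts where $s\in D_g$ and $s\notin D_g$: by Proposition~\ref{regulated} and $\Delta g\equiv 0$ off $D_g$, the second part always tends to $0$, so $h(t^{*+})$ exists iff $\lim_{s\to t^{*+},s\in D_g}f(s)\Delta g(s)=0$, which is conclusion~2, and in that case the limit is $0$ again, giving $h'_g(t^*)=-f(t^*)$.

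\textbf{Case $t^*\in[a,b]\backslash (C_g\cup D_g)$.} Here $\Delta g(t^*)=0$ so $h(t^*)=0$, and (up to the one-sided specification at $N_g\cup\{a,b\}$) the derivative is
\[h'_g(t^*)=\lim_{s\to t^*}\frac{f(s)\Delta g(s)}{g(s)-g(t^*)}.\]
For $s\notin D_g$ the quotient is $0$, while for $s\in D_g$ it is $f(s)\Delta g(s)/(g(s)-g(t))$. The existence of the full limit therefore reduces to whether the restriction along $s\in D_g$ tends to $0$. I then check against the definitions of $D_1,D_2,D_3$: the one-sided direction in which $D_g$ must accumulate at $t^*$ matches exactly the side on which the Stieltjes derivative is taken (left-only for $t^*\in N_g^-\cup\{b\}$, right-only for $t^*\in N_g^+\cup\{a\}$, two-sided otherwise). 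If $t^*\notin D_1\cup D_2\cup D_3$, then $D_g$ does not accumulate at $t^*$ on the relevant side, so the quotient is identically $0$ in a punctured one-sided neighborhood and the limit trivially equals $0$ (conclusion~3 with $h'_g(t)=0=-f(t^*)\irchi_{D_g}(t^*)$). If $t^*\in D_1\cup D_2\cup D_3$, the limit exists iff \eqref{condfDeltaderivable1} holds, in which case it equals $0$, yielding conclusion~1 together with \eqref{hgder}.

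Finally, I combine: whenever $h$ is $g$-differentiable at $t$, the computed value is $-f(t^*)$ when $t^*\in D_g$ and $0$ when $t^*\notin D_g$, which together read as $-f(t^*)\irchi_{D_g}(t^*)$. The main obstacle I expect is purely bookkeeping: carefully matching the side on which one takes the Stieltjes derivative (as dictated by Remark~\ref{remNgderivative}) with the side on which the sets $D_1,D_2,D_3$ require $D_g$ to accumulate, and also verifying that the behavior along $s\notin D_g$ never contributes to the limit thanks to Proposition~\ref{regulated} and the density of $\mathbb R\backslash D_g$.
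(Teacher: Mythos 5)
Your proposal is correct and takes essentially the same approach as the paper: the same case split according to whether $t^*\in D_g$ and whether $D_g$ accumulates at $t^*$ on the side relevant to the derivative, with the limit decomposed over $s\in D_g$ and $s\notin D_g$ using that $\Delta g$ vanishes off the countable set $D_g$ whose complement is dense. The only cosmetic difference is that you dispatch the $C_g$ points via Remark~\ref{remDgCg} at the outset, whereas the paper does so at the end.
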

\begin{proof}
We will first prove 1-3 in order for $t\in[a,b]\backslash C_g$ while showing that~\eqref{hgder} holds for each case. 

Let $t\in[a,b]\backslash C_g$. Observe that, in that case, $t^*=t$.

First, suppose $t\in D_1\cup D_2\cup D_3$.
	Observe that, in that case, $t\not\in D_g$ so $\Delta g(t)=0$.
Now, if $h$ is $g$-differentiable at $t$, by definition, \[h'_g(t)=\lim_{s \to t}\frac{h(s)-h(t)}{g(s)-g(t)}=\lim_{s \to t}\frac{f(s)\Delta g(s)}{g(s)-g(t)}.\]
Observe that, since such limit exists, the following limits exist and are equal and, since $\Delta g=0$ in $[a,b]\backslash D_g$, we must have that
\[\lim_{\substack{s\to t\\ s\in D_g}}\frac{f(s)\Delta g(s)}{g(s)-g(t)}=\lim_{\substack{s\to t\\ s\not\in D_g}}\frac{f(s)\Delta g(s)}{g(s)-g(t)}=0,\]
where the second limit can be taken because $D_g$ is countable, so $[a,b]\backslash D_g$ is dense in $[a,b]$. This demonstrates that if $h$ is $g$-differentiable at $t$ then~\eqref{condfDeltaderivable1} holds. Furthermore, this also shows that~\eqref{hgder} holds in this case. 
Conversely, assume that~\eqref{condfDeltaderivable1} holds. It is clear now that we need to show that
\[\lim_{s\to t}\frac{f(s)\Delta g(s)}{g(s)-g(t)}=0.\]
Once again, such limit exists provided that
\[\lim_{\substack{s\to t\\ s\in D_g}}\frac{f(s)\Delta g(s)}{g(s)-g(t)}=\lim_{\substack{s\to t\\ s\not\in D_g}}\frac{f(s)\Delta g(s)}{g(s)-g(t)}=0.\]
Note that this is the case thanks to condition~\eqref{condfDeltaderivable1} and the fact that $\Delta g=0$ in $[a,b]\backslash D_g$.

Now, suppose $t\in D_g\cap (D_g\cap(t,b])'$. In that case, the hypotheses guarantee that $t\not=b$ and $h=0$ on $(t,b]\backslash D_g$, which is dense on $(t,b]$ as $D_g$ is countable. Hence, we have that
\[\lim_{\substack{s\to t^+\\ s\in D_g}}h(s)=0.\]
Now, Remark~\ref{remDgCg} ensures that $h$ is $g$-differentiable at $t$ if and only if $h(t^+)$ exists which, at the same time, exists if and only if
\[\lim_{\substack{s\to t^+\\ s\not\in D_g}}h(s)=0.\]
Observe that this proves 2 and, furthermore, equality~\eqref{hgder} as, under these circumstances, $h(t^+)=0$ and
\[h'_g(t)=\frac{h(t^+)-h(t)}{\Delta g(t)}=\frac{-f(t)\Delta g(t)}{\Delta g(t)}=-f(t)=-f(t^*)\irchi_{D_g}(t^*).\]

Finally, for 3, we first consider the case $t\in[a,b]\backslash (D_g'\cup C_g)$. In that case, there is $\varepsilon>0$ such that $((t-\varepsilon,t+\varepsilon)\backslash\{t\})\cap D_g=\emptyset$, which implies that $g$ is continuous in $(t-\varepsilon,t+\varepsilon)\backslash\{t\}$ and, in particular, $\Delta g=0$ in that set. This means that $h=0$ on $(t-\varepsilon,t+\varepsilon)\backslash\{t\}$. Therefore, if $t\not\in D_g$,
\[\lim_{s \to t}\frac{h(s)-h(t)}{g(s)-g(t)}=\lim_{s \to t}\frac{h(s)}{g(s)-g(t)}=0,\]
which means that $h$ is $g$-differentiable at $t$ and $h'_g(t)=0=-f(t^*)\irchi_{D_g}(t^*)$. Otherwise, we have that $t\in D_g$ and, in that case, it follows that $h(t^+)=0$. Remark~\ref{remDgCg} guarantees that $h$ is $g$-differentiable at $t$ and
\[h'_g(t)=\frac{h(t^+)-h(t)}{\Delta g(t)}=\frac{-f(t)\Delta g(t)}{\Delta g(t)}=-f(t)=-f(t^*)\irchi_{D_g}(t^*).\]

Hence, all that is left to do to prove 3 for $t\in [a,b]\backslash C_g$ is to show that $h$ is $g$-differentiable if $t\in D_g'$ but does not satisfy the conditions in 1 and 2, that is, if $t\in D_g\cap(D_g\cap[a,b])'$ and $t\not\in D_g\cap (D_g\cap(t,b])'$. In this setting, there exists $r>0$ such that $(t,t+r)\cap D_g=\emptyset$, which guarantees that $\Delta g=0$ on $(t,t+r)$. This implies that $h=0$ on $(t,t+r)$ and, thus, $h(t^+)=0$. As a consequence, $h$ is $g$-differentiable at $t$ and 
\[h'_g(t)=\frac{h(t^+)-h(t)}{\Delta g(t)}=\frac{-f(t)\Delta g(t)}{\Delta g(t)}=-f(t)=-f(t^*)\irchi_{D_g}(t^*).\]

Lastly, we prove the result for $t\in[a,b]\cap C_g$. Let $t\in(a_n,b_n)\subset [a,b]\cap C_g$ for some $a_n,b_n$ in~\eqref{Cgdisj}. In that case, $t^*=b_n$ and, as pointed out in Remark~\ref{remDgCg}, $h$ is $g$-differentiable at $t$ if and only if $h$ is $g$-differentiable at $t^*$.
Hence, 1, 2 and 3 follow from the previous cases. Furthermore, noting that $t^*\in D_g$ if and only if $b_n\in D_g$ and using~\eqref{hgder}, we have that
\[h'_g(t)=h'_g(b_n)=-f(b_n^*)\irchi_{D_g}(b_n^*)=-f(b_n)\irchi_{D_g}(b_n)=-f(t^*)\irchi_{D_g}(t^*),\]
that is,~\eqref{condfDeltaderivable1} also holds for these cases.
\end{proof}
	\begin{rem}To visualize the sets $D_1$, $D_2$ and $D_3$, we provide an illustration in Figure~\ref{Ds}.
\end{rem}
\begin{figure}[h]
	\centering
	\includegraphics[width=.29\textwidth]{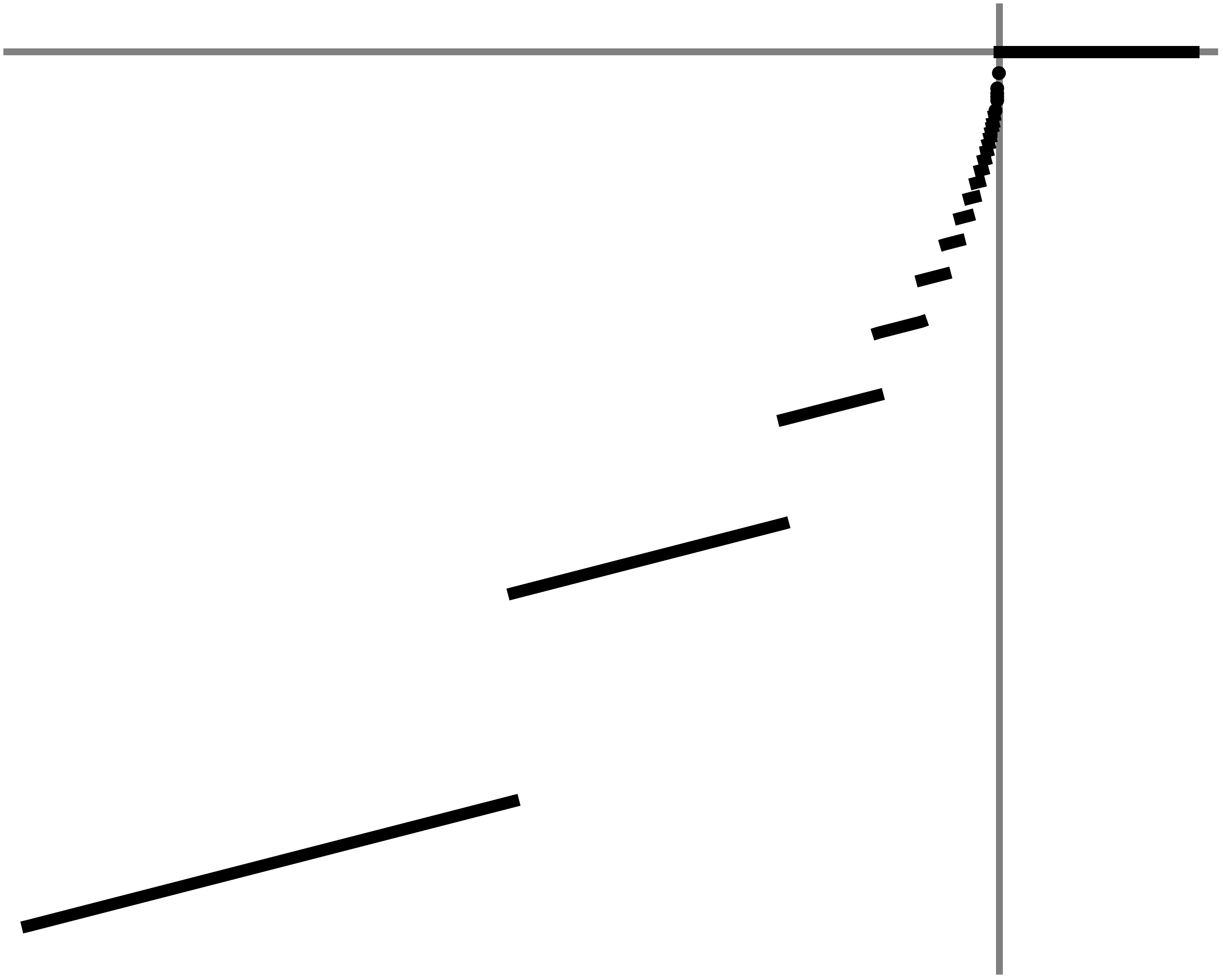}\quad \includegraphics[width=.29\textwidth]{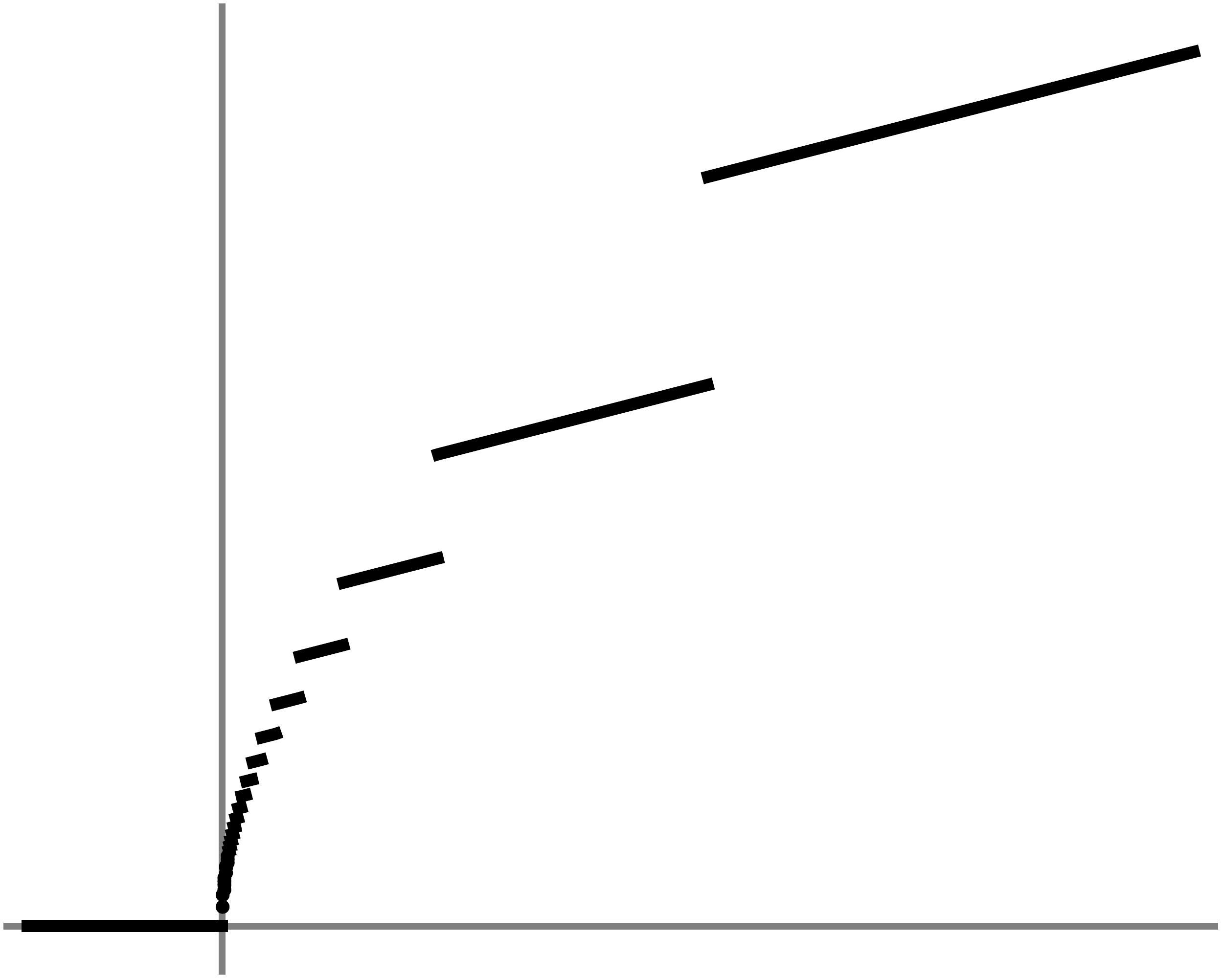}\quad \includegraphics[width=.29\textwidth]{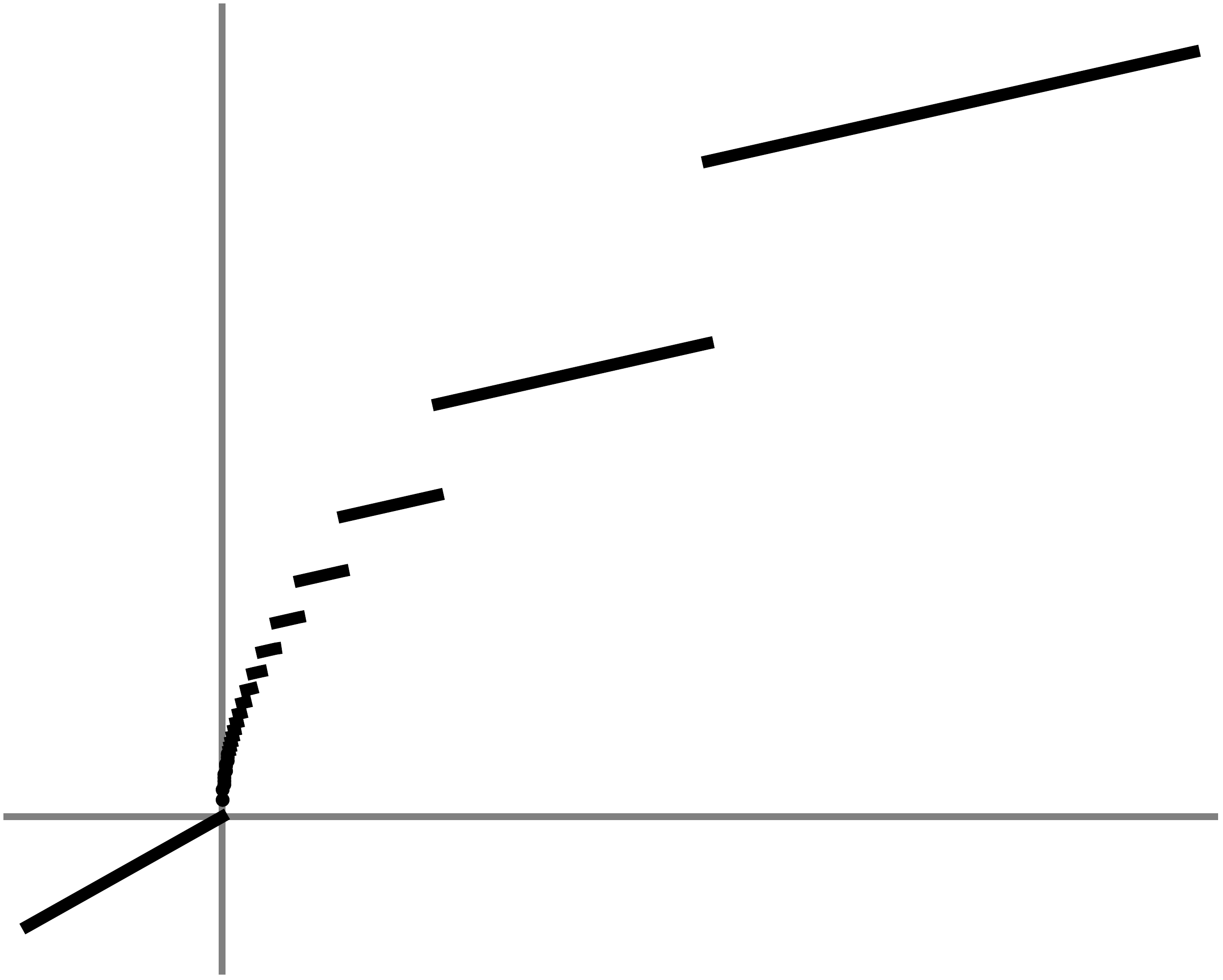}\quad 
	\caption{From left to right, representation of derivators for which zero belongs to $D_1$, $D_2$ and $D_3$ respectively.}
	\label{Ds}
\end{figure}

Combining Propositions~\ref{regulated} and~\ref{propfDeltadif} we can obtain the following result about the existence of the Stieltjes derivative of $\Delta g$.
\begin{cor}\label{differentiableDeltag}
	Consider the
	 sets $D_1, D_2, D_3$ in~\eqref{defD1}-\eqref{defD3} and the 
	restriction $\Delta g|_{[a,b]}$. For $t\in[a,b]$, and denoting by $t^*$ the corresponding point in~\eqref{tstar}, we have the following properties:
	\begin{itemize}
		\item[\textup{1.}] If $t^*\in D_1\cup D_2\cup D_3$
		then $\Delta g|_{[a,b]}$ is $g$-differentiable at $t$ if and only if
		\begin{equation}\label{pointwisecondDeltaderivable}
			\lim_{\substack{s\to t^*\\ s\in D_g}}\frac{\Delta g|_{[a,b]}(s)}{g(s)-g(t)}=0,
		\end{equation}
	 where we might be considering the corresponding side limit according to the definition of Stieltjes derivative, see \textup{Remark~\ref{remNgderivative}}.
		\item[\textup{2.}] In any other case, $\Delta g|_{[a,b]}$ is $g$-differentiable at $t$.
	\end{itemize}
	Furthermore, if $\Delta g|_{[a,b]}$ is $g$-differentiable at $t$, then
	\begin{equation}\label{Deltagder}
		\left(\Delta g|_{[a,b]}\right)'_g(t)=-\irchi_{D_g}(t^*).
	\end{equation}
	In particular, $\Delta g|_{[a,b]}$ is $g$-differentiable on $[a,b]$ and~\eqref{Deltagder} holds on $[a,b]$ if
	\begin{equation}\label{condDeltaderivable}
		\lim_{\substack{s\to t\\ s\in D_g}}\frac{\Delta g|_{[a,b]}(s)}{g(s)-g(t)}=0,\quad\mbox{for all }
		t\in D_1\cup D_2\cup D_3,
	\end{equation}
 with the same consideration for the side limits, see \textup{Remark~\ref{remNgderivative}}.
\end{cor}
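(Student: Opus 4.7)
The plan is to reduce this directly to Proposition~\ref{propfDeltadif} with the constant choice $f\equiv 1$, and then observe that the case distinction collapses thanks to Proposition~\ref{regulated}. With $f\equiv 1$ we have $h(t)=\Delta g(t)=\Delta g|_{[a,b]}(t)$, so Proposition~\ref{propfDeltadif} applies verbatim, and formula~\eqref{hgder} specializes to \eqref{Deltagder} since the factor $f(t^*)$ becomes $1$.

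I would then go through the three cases of Proposition~\ref{propfDeltadif} one by one. Case~1 (that is $t^*\in D_1\cup D_2\cup D_3$) immediately gives part~1 of the corollary: condition~\eqref{condfDeltaderivable1} with $f\equiv 1$ is exactly \eqref{pointwisecondDeltaderivable} (recall that $g(t)=g(t^*)$ by the lemma following~\eqref{tstar}, so writing $g(t)$ or $g(t^*)$ in the denominator is equivalent). Case~3 of Proposition~\ref{propfDeltadif} is already part~2 of the corollary.

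The key observation that makes the statement cleaner than its parent is that Case~2 of Proposition~\ref{propfDeltadif} becomes automatic when $f\equiv 1$: the condition
\[
\lim_{\substack{s\to t^{*+}\\ s\in D_g}} f(s)\,\Delta g(s)=0
\]
reduces to $\lim_{s\to t^{*+}} \Delta g(s)=0$, which holds unconditionally by~\eqref{Deltalimit} in Proposition~\ref{regulated}. Hence whenever $t^*\in D_g\cap(D_g\cap(t,b])'$, the map $\Delta g|_{[a,b]}$ is $g$-differentiable at $t$, and this case can be absorbed into part~2 of the corollary without any extra hypothesis.

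Finally, the global statement (that $\Delta g|_{[a,b]}$ is $g$-differentiable on all of $[a,b]$ and~\eqref{Deltagder} holds everywhere under~\eqref{condDeltaderivable}) is just the pointwise result applied at each $t\in[a,b]$: if $t^*\notin D_1\cup D_2\cup D_3$, part~2 gives differentiability, and if $t^*\in D_1\cup D_2\cup D_3$ then \eqref{condDeltaderivable} supplies exactly the hypothesis needed to invoke part~1 at $t$ (note the change of variables from $t$ to $t^*$ in \eqref{condDeltaderivable} costs nothing because $g(t)=g(t^*)$ and the limit in \eqref{pointwisecondDeltaderivable} depends on $t$ only through $g(t)$). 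I do not expect any real obstacle: the whole argument is a specialization of the previous proposition plus one invocation of the vanishing of $\Delta g$ at both lateral limits. The only point worth double-checking is the bookkeeping of lateral limits dictated by Remark~\ref{remNgderivative}, but these are handled uniformly because both the quotient in \eqref{pointwisecondDeltaderivable} and the defining limit of $h'_g(t)$ in Proposition~\ref{propfDeltadif} are taken with the same sidedness.
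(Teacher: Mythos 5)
Your proposal is correct and follows exactly the route the paper intends: the corollary is stated as a direct combination of Propositions~\ref{regulated} and~\ref{propfDeltadif}, with $f\equiv 1$ specializing \eqref{condfDeltaderivable1} to \eqref{pointwisecondDeltaderivable} and \eqref{hgder} to \eqref{Deltagder}, and with \eqref{Deltalimit} making Case~2 of the proposition vacuously satisfied so that it merges into part~2. No gaps.
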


\begin{rem}\label{remarkDeltadiff}
	Note condition~\eqref{condDeltaderivable} becomes vacuous when $[a,b]\cap D_g$ is closed, as in that case $([a,b]\cap D_g)'\subset [a,b]\cap D_g$, so $D_1=D_2=D_3=\emptyset$.
	Furthermore, condition~\eqref{condDeltaderivable} can be satisfied when $([a,b]\cap D_g)'\backslash ([a,b]\cap D_g)\not=\emptyset$. Indeed, the function
	\[g(t)=t+\sum_{n=1}^\infty2^{-n}\irchi_{(1/n,+\infty)}(t),\quad t\in\mathbb R,\]
	represented in Figure~\ref{f1} 
	is nondecreasing, left-continuous with $D_g=\{1/n: n\in\mathbb N\}$ and
	\[
	\Delta g(t)=\left\{
	\begin{array}{ll}
		2^{-n},\quad & t=1/n \mbox{ for some }n\in\mathbb N,\\
		0,\quad & \mbox{otherwise.}
	\end{array}
	\right.
	\]
	\begin{figure}[h]
		\centering
		\includegraphics[width=\textwidth]{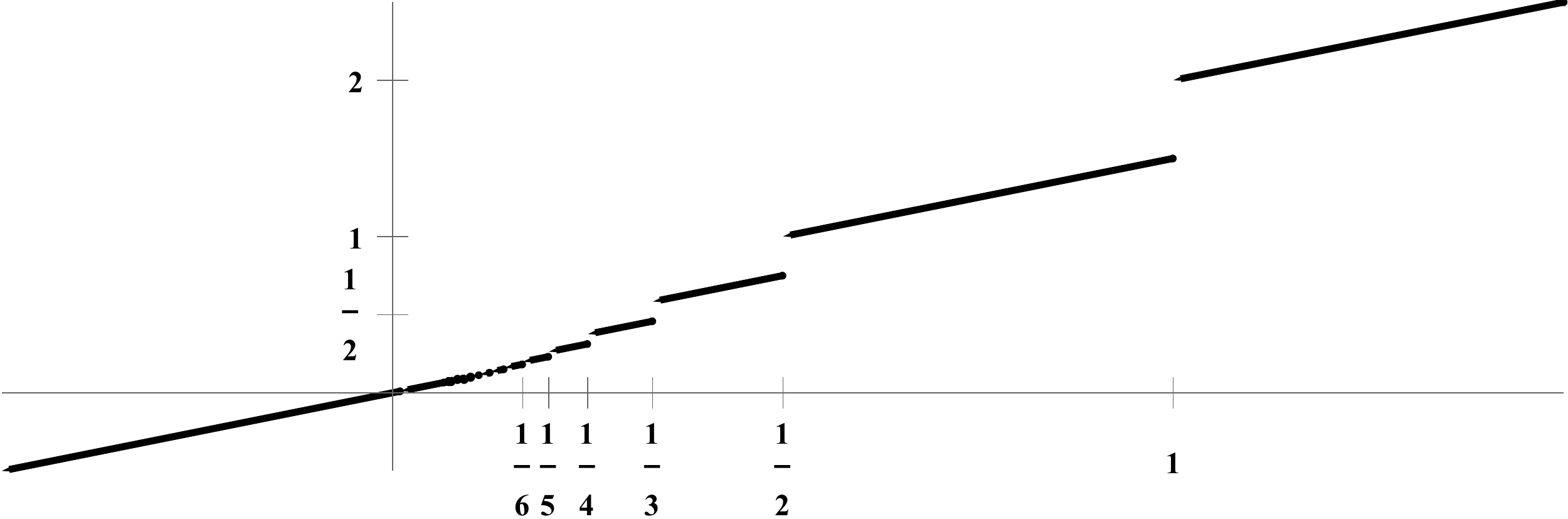}
		\caption{Graph of the function $g$ in Remark~\ref{remarkDeltadiff}.}
		\label{f1}
	\end{figure}

	Observe that $-1\not\in D_g$, $2\not\in D_g\cup C_g\cup N_g^+$ and
	$([-1,2]\cap D_g)'\backslash D_g=\{0\}$. Hence, we have that $D_1=D_2=\emptyset$ and $D_3=\{0\}$ and, furthermore,
	\begin{align*}
		\lim_{\substack{s\to 0\\ s\in D_g}}\frac{\Delta g|_{[-1,2]}(s)}{g(s)-g(0)}&=\lim_{n\to+\infty}\frac{\Delta g|_{[-1,2]}(1/n)}{g(1/n)}=\lim_{n\to+\infty}\frac{2^{-n}}{1/n+\sum_{k=1}^\infty2^{-k}\irchi_{(1/k,+\infty)}(1/n)}\\
		&=\lim_{n\to+\infty}\frac{2^{-n}}{1/n+\sum_{k=n+1}^\infty2^{-k}}=\lim_{n\to+\infty}\frac{2^{-n}}{1/n+2^{-n}}=\lim_{n\to+\infty}\frac{1}{2^n/n+1}=0,
	\end{align*}
	that is,~\eqref{condDeltaderivable} is satisfied on $[-1,2]$. As a consequence, we have that $\Delta g|_{[-1,2]}$ is $g$-differentiable and, in this case, $(\Delta g|_{[-1,2]})'_g(t)=\irchi_{D_g}(t)$, $t\in [-1,2]$. This example also shows that $(\Delta g|_{[-1,2]})'_g$ needs not be $g$-differentiable as $(\Delta g|_{[-1,2]})''_g(0)$ cannot exist since
	\[\lim_{\substack{s\to 0\\ s\in D_g}}\frac{(\Delta g|_{[-1,2]})'_g(s)-(\Delta g|_{[-1,2]})'_g(0)}{g(s)-g(0)}=\lim_{\substack{s\to 0\\ s\in D_g}}\frac{1-0}{g(s)-g(0)}=+\infty.\]

	Nevertheless, we should highlight that there are nondecreasing and left-continuous functions satisfying all the conditions but~\eqref{condDeltaderivable}. Indeed, it is enough to consider a small modification of the previous example, namely,
	\begin{equation}\label{gtilde}
		\widetilde g(t)=t\irchi_{(1,+\infty)}(t)+\sum_{n=1}^\infty2^{-n}\irchi_{(1/n,+\infty)}(t),\quad t\in\mathbb R,
	\end{equation}
	represented in Figure~\ref{f2}.
	\begin{figure}[h]
		\centering
		\includegraphics[width=\textwidth]{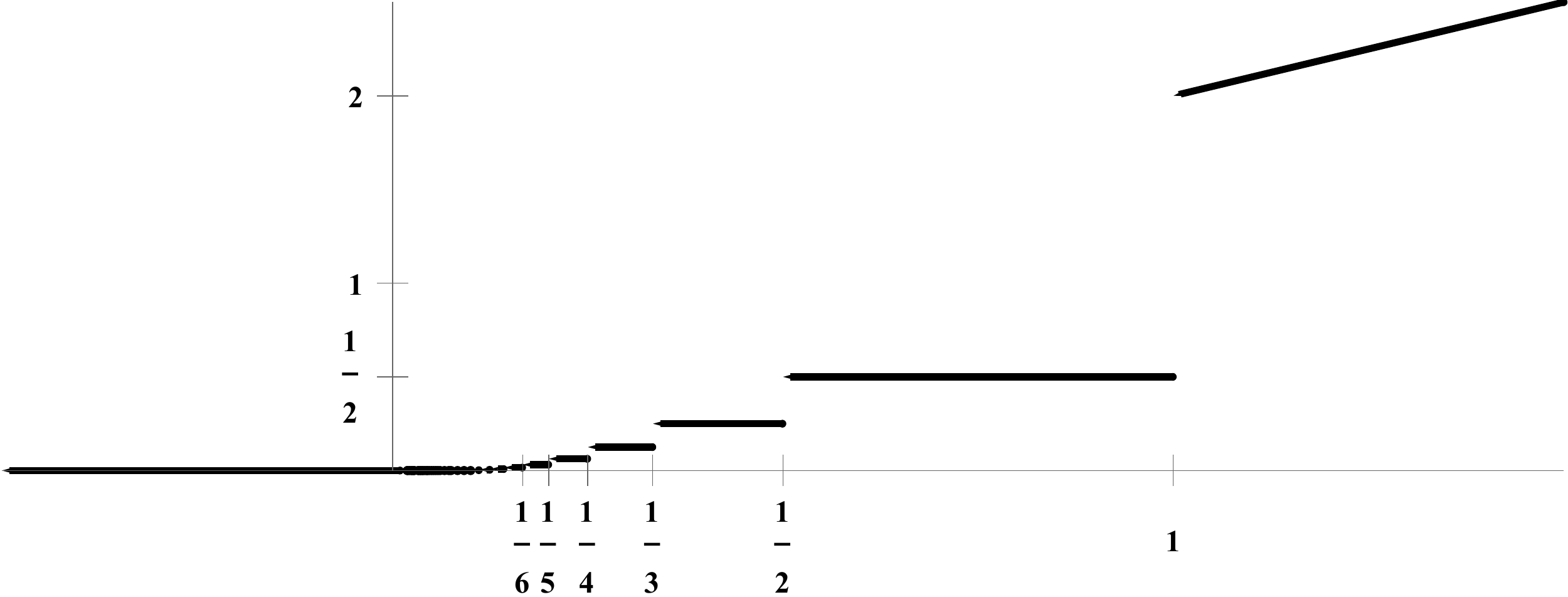}
		\caption{Graph of the function $\widetilde g$ in Remark~\ref{remarkDeltadiff}.}
		\label{f2}
	\end{figure}	

	Once again, this is a nondecreasing and left-continuous map such that $D_{\widetilde g}=D_g$ and $\Delta \widetilde g=\Delta g+\irchi_{\{1\}}$. Furthermore,
	\begin{equation}\label{Cgtilde}
		C_{\widetilde g}=(-\infty,0)\cup\bigcup_{n=1}^\infty \left(\frac{1}{n+1},\frac{1}{n}\right),
	\end{equation}
	so it follows that $-1\not\in D_{\widetilde g}\cup N_{\widetilde g}^-$, $2\not\in D_{\widetilde g}\cup C_{\widetilde g}\cup N_{\widetilde g}^+$.
	However, condition~\eqref{condDeltaderivable} cannot be satisfied as, in this case, $D_1=D_3=\emptyset$ and $D_2=\{0\}$, $\widetilde g(0)=0$
	and
	\[
	\lim_{n\to+\infty}\frac{\Delta \widetilde g|_{[-1,2]}(1/n)}{\widetilde g(1/n)}=\lim_{n\to+\infty}\frac{2^{-n}}{\sum_{k=1}^\infty2^{-k}\irchi_{(1/k,+\infty)}(1/n)}=\lim_{n\to+\infty}\frac{2^{-n}}{2^{-n}}=1.
	\]
\end{rem}

\section{Differentiating the product rule}
The aim of this section is to study the differentiability of the product of two differentiable functions beyond the first derivative, as this case is already covered by Proposition~\ref{PropStiDer}. 

Observe that Proposition~\ref{PropStiDer} provides us a good starting point for our research. Indeed, given that in~\eqref{deltaproducto} some of the maps involved are evaluated at $t^*$, we need to consider how this affects the Stieltjes differentiability of functions. With this idea in mind, we present the following result establishing some relations between a differentiable map and its corresponding counterpart evaluated at $t^*$.


\begin{pro}\label{differentiablef*}
	Let $f:[a,b]\to\mathbb F$ and define $f^*:[a,b]\to\mathbb F$ as $f^*(t)=f(t^*)$ with $t^*$ as in~\eqref{tstar}. 

		Consider the sets
		\begin{align*}
			C_1&=\{t\in[a,b]\cap N_g^-: t\in (C_g\cap[a,t))'\},\\
			C_2&=\{t\in[a,b]\cap (N_g^+\cup D_g): t\in (C_g\cap(t,b])'\},\\
			C_3&=\{t\in[a,b]\backslash (C_g\cup N_g\cup D_g): t\in (C_g\cap[a,b])'\}.
		\end{align*}	
Then, for $t\in[a,b]$:
	\begin{itemize}
		\item[\textup{1.}] If $t^*\in C_1\cup C_2\cup C_3$, then $f$ is $g$-differentiable at $t$ if and only if $f^*$ is $g$-differentiable at $t$ and
			\begin{equation}\label{condHimpliesFleft}
				\lim_{\substack{s\to t^*\\ s\in C_g}}\frac{f(s)-f(t^*)}{g(s)-g(t^*)}=(f^*)'_g(t),
			\end{equation}
			where we might be considering the corresponding side limit according to the definition of Stieltjes derivative, see \textup{Remark~\ref{remNgderivative}}.
		\item[\textup{2.}] If $t^*\not\in C_1\cup C_2\cup C_3$, $f$ is $g$-differentiable at $t$ if and only if $f^*$ is $g$-differentiable at $t$.
	\end{itemize}
	Furthermore, if $f$ and $f^*$ are $g$-differentiable at $t\in[a,b]$, then $(f^*)'_g(t)=f'_g(t)=f'_g(t^*)$.
\end{pro}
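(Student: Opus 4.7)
The plan is to compare the $g$-derivatives of $f$ and $f^*$ by rewriting the defining difference quotient of $f^*$ via the identities $f^*(s) = f(s^*)$ and $g(s) = g(s^*)$, then splitting the relevant limit into sub-limits over $s \in C_g$ and $s \in [a,b] \setminus C_g$, and finally showing that the $s \in C_g$ sub-limit for $f^*$ is automatic from the $s \notin C_g$ sub-limit for $f$.

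I would first reduce to the case $t = t^* \notin C_g$: if $t \in (a_n, b_n) \subset C_g$, then Remark~\ref{remDgCg} applied to both $f$ and $f^*$ shows that each is $g$-differentiable at $t$ iff it is at $b_n = t^*$, and since $b_n \notin C_g$ gives $f^*(b_n) = f(b_n)$, all of the assertions at $t$ are equivalent to the corresponding ones at $t^*$. Assuming henceforth $t = t^*$, the identity
\[
\frac{f^*(s) - f^*(t^*)}{g(s) - g(t^*)} = \frac{f(s^*) - f(t^*)}{g(s^*) - g(t^*)}, \qquad s \ne t^*,
\]
lets me split the relevant limit (two-sided or one-sided according to Remark~\ref{remNgderivative}) into sub-limits over $s \notin C_g$ and $s \in C_g$. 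On $\{s \notin C_g\}$, where $s = s^*$, the quotient matches the one defining $f'_g(t^*)$. On $\{s \in C_g\}$ with $s \in (a_k, b_k)$, it equals $\frac{f(b_k) - f(t^*)}{g(b_k) - g(t^*)}$; using disjointness of the components $(a_k, b_k)$ together with $t^* \notin C_g$, the associated $b_k$ must tend to $t^*$ through values in $\{b_k\} \subset [a,b] \setminus C_g$ as $s \to t^*$ from the relevant side, so the $s \in C_g$ sub-limit for $f^*$ is a sub-limit of the $s \notin C_g$ one for $f$ and agrees with it whenever the latter exists.

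This comparison yields that $(f^*)'_g(t)$ exists iff the $s \notin C_g$ sub-limit for $f$ exists, while $f'_g(t)$ exists iff, additionally, the $s \in C_g$ sub-limit for $f$ exists and equals $(f^*)'_g(t)$ --- which is precisely condition~\eqref{condHimpliesFleft}. The sets $C_1, C_2, C_3$ are constructed to single out those $t^* \in [a,b] \setminus C_g$ which are accumulation points of $C_g$ from the side dictated by the type of limit in the definition of $f'_g(t^*)$: $C_1$ for $t^* \in N_g^-$ (left), $C_2$ for $t^* \in N_g^+ \cup D_g$ (right), $C_3$ for generic $t^*$ (either side). Thus if $t^* \notin C_1 \cup C_2 \cup C_3$ the $s \in C_g$ sub-limit is vacuous and $g$-differentiability of $f$ and $f^*$ at $t$ are equivalent (case 2); otherwise condition~\eqref{condHimpliesFleft} is exactly the extra requirement (case 1). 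The final equality $(f^*)'_g(t) = f'_g(t) = f'_g(t^*)$ then follows by matching the $s \notin C_g$ sub-limits together with the initial reduction.

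The main obstacle I anticipate is the careful case analysis for the one-sided limits according to whether $t^* \in D_g$, $N_g^-$, $N_g^+$ or none of these, together with the boundary situation $t^* \in \{a_k\}$ in which a single component of $C_g$ borders $t^*$ immediately on one side, so that the corresponding one-sided limit passes essentially through $C_g$ alone and the sub-limit reasoning must be adapted. Verifying in each sub-configuration that $C_1, C_2, C_3$ correctly encode the direction-dependent accumulation of $C_g$ is where most of the technical work would sit.
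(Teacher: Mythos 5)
Your proposal follows essentially the same route as the paper's proof: reduce to $t=t^*\notin C_g$, split the difference quotients of $f$ and $f^*$ into sub-limits over $C_g$ and its complement, and use $g(s)=g(s^*)$, $f^*(s)=f(s^*)$ to show the $C_g$ sub-limit for $f^*$ collapses onto the non-$C_g$ limit for $f$, with $C_1,C_2,C_3$ encoding side-dependent accumulation of $C_g$. The one claim you assert rather than prove --- that in the right-sided case the points $s^*=b_k$ still converge to $t^*$ --- is precisely where the paper spends its main technical effort (a contradiction argument using the connected components of $C_g$), but your stated justification is the correct idea and you correctly flag this as the remaining work.
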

\begin{proof}
		First, observe that given the definition of the Stieltjes derivative at the points of $C_g$, it is enough to prove the result for $t\in[a,b]\backslash C_g$, for which we will use the information in Remark~\ref{pointcharact}.

	Let $t\in[a,b]\backslash C_g$. We define $A_t=\{s\in[a,b]: g(s)\not=g(t)\}$ and $F_t, F^*_t:A_t\to\mathbb F$ as
	\[F_t(s)=\frac{f(s)-f(t)}{g(s)-g(t)},\quad F^*_t(s)=\frac{f^*(s)-f^*(t)}{g(s)-g(t)}=\frac{f^*(s)-f(t)}{g(s)-g(t)},\]
	where the last equality holds since $t^*=t$ for such point. Now, since $t\in[a,b]\backslash C_g$, Remark~\ref{pointcharact} guarantees that~\eqref{cond1} and/or~\eqref{cond2} must hold. 

	Let us assume that~\eqref{cond1} holds. 

	We shall assume that $t\not=a$ as the $g$-derivatives in such point are computed as the right handside limit so it is irrelevant if~\eqref{cond1} holds in that case. In these conditions, we have that $[a,t)\subset A_t$.
	We claim that
	\begin{align}
		\lim_{s\to t^-} F^*_t(s)=f'_g(t),&\quad\mbox{if $f'_g(t)$ exists},\label{FtoHleftlimit}\\
		\lim_{s\to t^-} F_t(s)=(f^*)'_g(t),&\quad\mbox{if $(f^*)'_g(t)$ exists and, if $t\in ([a,t)\cap C_g)'$, }\lim_{\substack{s\to t^-\\ s\in C_g}}F_t(s)=(f^*)'_g(t).\label{HtoFleftlimit}
	\end{align}

	If $t\not\in ([a,t)\cap C_g)'$, there exists $r\in(0,t-a)$ such that $(t-r,t)\cap C_g=\emptyset$. Hence, by definition of $f^*$, we have that $f^*=f$ on $(t-r,t)$ which, in turn, implies that $F^*_t=F_t$ on that set, from which~\eqref{FtoHleftlimit} and~\eqref{HtoFleftlimit} follow. 

	Otherwise, we have that $t\in ([a,t)\cap C_g)'$. In that case, $t\in([a,t)\backslash C_g)'$ as well as, if this was not the case, there would be $\varepsilon\in(0,t-a)$ such that $(t-\varepsilon,t)\backslash C_g=\emptyset$, which would imply that $(t-\varepsilon,t)\subset C_g$ and would contradict~\eqref{cond1}. 
	Now, it is clear that~\eqref{FtoHleftlimit} and~\eqref{HtoFleftlimit} hold if the following statements are true:
	\begin{align*}
		\lim_{\substack{s\to t^-\\ s\not\in C_g}} F^*_t(s)=\lim_{\substack{s\to t^-\\ s\in C_g}} F^*_t(s)=f'_g(t),&\quad\mbox{if $f'_g(t)$ exists,}\\
		\lim_{\substack{s\to t^-\\ s\not\in C_g}} F_t(s)=(f^*)'_g(t),&\quad\mbox{if $(f^*)'_g(t)$ exists and }\lim_{\substack{s\to t^-\\ s\in C_g}}F_t(s)=(f^*)'_g(t),
	\end{align*}
	where we can consider all the limits above because $t\in ([a,t)\cap C_g)'\cap ([a,t)\backslash C_g)'$. Observe that $F^*_t=F_t$ on $A_t\backslash C_g$, so it is clear that $\lim_{s\to t^-} F^*_t|_{A_t\backslash C_g}(s)=\lim_{s\to t^-} F_t|_{A_t\backslash C_g}(s)$. Thus, it suffices to show that 
	\begin{equation}
		\lim_{\substack{s\to t^-\\ s\in C_g}} F^*_t(s)=f'_g(t),\quad\mbox{if $f'_g(t)$ exists.}\label{FimpliesH}
	\end{equation}

	Let $\{s_n\}_{n\in\mathbb N}$ be a sequence in $[a,t)\cap C_g$ converging to $t$. 
	For each $n\in \mathbb N$, denote by $s_n^*$ the corresponding number assigned to $s_n$ by~\eqref{tstar}. Observe that, for each $n\in \mathbb N$, we have that $s_n^*\in(s_n,t)$, which implies that the sequence $\{s_n^*\}_{n\in\mathbb N}$ is contained in $[a,t)$ and converges to $t$.
	Furthermore, we have that, by definition and the left-continuity of $g$,
	\[g(s_n)=g(s_n^*),\quad f^*(s_n)=f(s_n^*),\quad n\in\mathbb N.\]
	Hence, 
	\[\lim_{n\to +\infty} F^*_t(s_n)=\lim_{n\to +\infty}\frac{f^*(s_n)-f^*(t)}{g(s_n)-g(s)}=\lim_{n\to +\infty}\frac{f(s_n^*)-f(t)}{g(s_n^*)-g(s)}=f'_g(t),\]
	where the last equality follows from the fact that $f'_g(t)$ exists. Since $\{s_n\}_{n\in\mathbb N}$ was arbitrarily chosen, we have that~\eqref{FimpliesH} holds. Hence, we have that~\eqref{FtoHleftlimit} and~\eqref{HtoFleftlimit} hold. Observe that, in particular, this proves the result if $t\in C_1$.

	We now assume that~\eqref{cond2} holds. 
	Once again, we can assume that $t\not=b$ as the $g$-derivatives in such point are computed as the left handside limit so it is irrelevant if~\eqref{cond2} holds in that case.
	In these conditions, $(t,b]\subset A_t$ and
	we claim that
	\begin{align}
		\lim_{s\to t^+} F^*_t(s)=f'_g(t),&\quad\mbox{if $f'_g(t)$ exists},\label{FtoHrightlimit}\\
		\lim_{s\to t^+} F_t(s)=(f^*)'_g(t),&\quad\mbox{if $(f^*)'_g(t)$ exists and, if $t\in ((t,b]\cap C_g)'$, }\lim_{\substack{s\to t^+\\ s\in C_g}}F_t(s)=(f^*)'_g(t).
		\label{HtoFrightlimit}
	\end{align}

	If $t\not\in ((t,b]\cap C_g)'$, there exists $r\in(0,b-t)$ such that $(t,t+r)\cap C_g=\emptyset$, which once again implies that $F^*_t=F_t$ on that set, from which~\eqref{FtoHrightlimit} and~\eqref{HtoFrightlimit} follow. Otherwise, $t\in ((t,b]\cap C_g)'$ and, following a similar reasoning as before, we can see that $t\in ((t,b]\backslash C_g)'$ and $\lim_{s\to t^+} F^*_t|_{A_t\backslash C_g}(s)=\lim_{s\to t^+} F_t|_{A_t\backslash C_g}(s)$ so it is enough to show that
	\begin{equation}
		\lim_{\substack{s\to t^+\\ s\in C_g}} F^*_t(s)=f'_g(t),\quad\mbox{if $f'_g(t)$ exists.}\label{FimpliesH+}
	\end{equation}

	Let $\{s_n\}_{n\in\mathbb N}$ be a sequence in $(t,b]\cap C_g$ converging to $t$. 
	For each $n\in \mathbb N$, denote by $s_n^*$ the corresponding number assigned to $s_n$ by~\eqref{tstar}. Note that $s_n^*\in (s_n,b]$, $n\in\mathbb N$. We claim that
	\begin{equation}\label{condconvergencesn*}
		\mbox{for each }\varepsilon>0,\mbox{ there exists }N\in \mathbb N\mbox{ such that } 0<s_n^*-s_n<\varepsilon \mbox{ for all } n\ge N.
	\end{equation}

	Suppose that~\eqref{condconvergencesn*} is not true. In that case, we can find $\varepsilon_0>0$ such that $s_n^*-s_n\ge \varepsilon_0$ for all $n\in\mathbb N$. Since $\{s_n\}_{n\in\mathbb N}\subset (t,b]$ converges to $t$, we can find $p\in\bN$ such that if $n\ge p$, then $s_n< s_p$ and
	\[0< s_n-t<\frac{\e_0}{2}.\] 
	Hence, for any $n\ge p$, we have that
	\[s_n^*-s_p= s_n^*-s_n+s_n-t+t-s_p\ge s_n^*-s_n+t-s_p\ge \e_0-\frac{\e_0}{2}=\frac{\e_0}{2}>0,\]
	so $s_n<s_p<s_n^*$. This implies that, for every $n\ge p$, $s_n$ and $s_p$ are in the same connected component of $C_g$ and, as a consequence, $s_n^*=s_p^*$. Hence, $\bigcup_{n\ge p}(s_n,s_p^*)=(t,s_p^*)\ss C_g$, which contradicts that $t\in ((t,b]\backslash C_g)'$. Thus,~\eqref{condconvergencesn*} must be true.


	We can now show that $\{s_n^*\}_{n\in\mathbb N}$ converges to $t$. Indeed, let $\varepsilon>0$. On the one hand,~\eqref{condconvergencesn*} guarantees that there exists $n_1\in\mathbb N$ such that 
	\[0<s_n^*-s_n<\frac{\varepsilon}{2}\quad\mbox{for all }n\in\mathbb N,\ n\ge n_1.\]
	On the other hand, since $\{s_n\}\subset (t,b]$ converges to $t$, there exists $n_2\in\mathbb N$ such that
	\[0<s_n-t<\frac{\varepsilon}{2}\quad\mbox{for all }n\in\mathbb N,\ n\ge n_2.\]
	Hence, if we take $M=\max\{n_1,n_2\}$, for any $n\in\mathbb N$ such that $n\ge M$, we have that
	\[0<s_n^*-t=s_n^*-s_n+s_n-t\le \frac{\varepsilon}{2}+\frac{\varepsilon}{2}=\varepsilon,\]
	that is, $\{s_n^*\}$ converges to $t$. Furthermore, by definition, 
	\[g(s_n)=g(s_n^*),\quad f^*(s_n)=f(s_n^*),\quad n\in\mathbb N,\]
	from which we have that
	\[\lim_{n\to +\infty} F^*_t(s_n)=\lim_{n\to +\infty}\frac{f^*(s_n)-f^*(t)}{g(s_n)-g(s)}=\lim_{n\to +\infty}\frac{f(s_n^*)-f(t)}{g(s_n^*)-g(s)}=f'_g(t),\]
	where the last equality follows from the fact that $f'_g(t)$ exists. Since $\{s_n\}_{n\in\mathbb N}$ was arbitrarily chosen, we have that~\eqref{FimpliesH+} holds. Hence, we have that~\eqref{FtoHrightlimit} and~\eqref{HtoFrightlimit} hold. Observe that, in particular, this proves the result if $t\in C_2$.

	The remaining cases for $t\in[a,b]\backslash C_g$, namely when~\eqref{cond1} and~\eqref{cond2} hold simultaneously, now follow since, in that case,~\eqref{FtoHleftlimit}-\eqref{HtoFleftlimit} and~\eqref{FtoHrightlimit}-\eqref{HtoFrightlimit} hold.
\end{proof}
\begin{rem}To visualize the sets $C_1$, $C_2$ and $C_3$, we provide an illustration in Figure~\ref{Cs}.
	\end{rem}
	\begin{figure}[h]
	\centering
	\includegraphics[width=.29\textwidth]{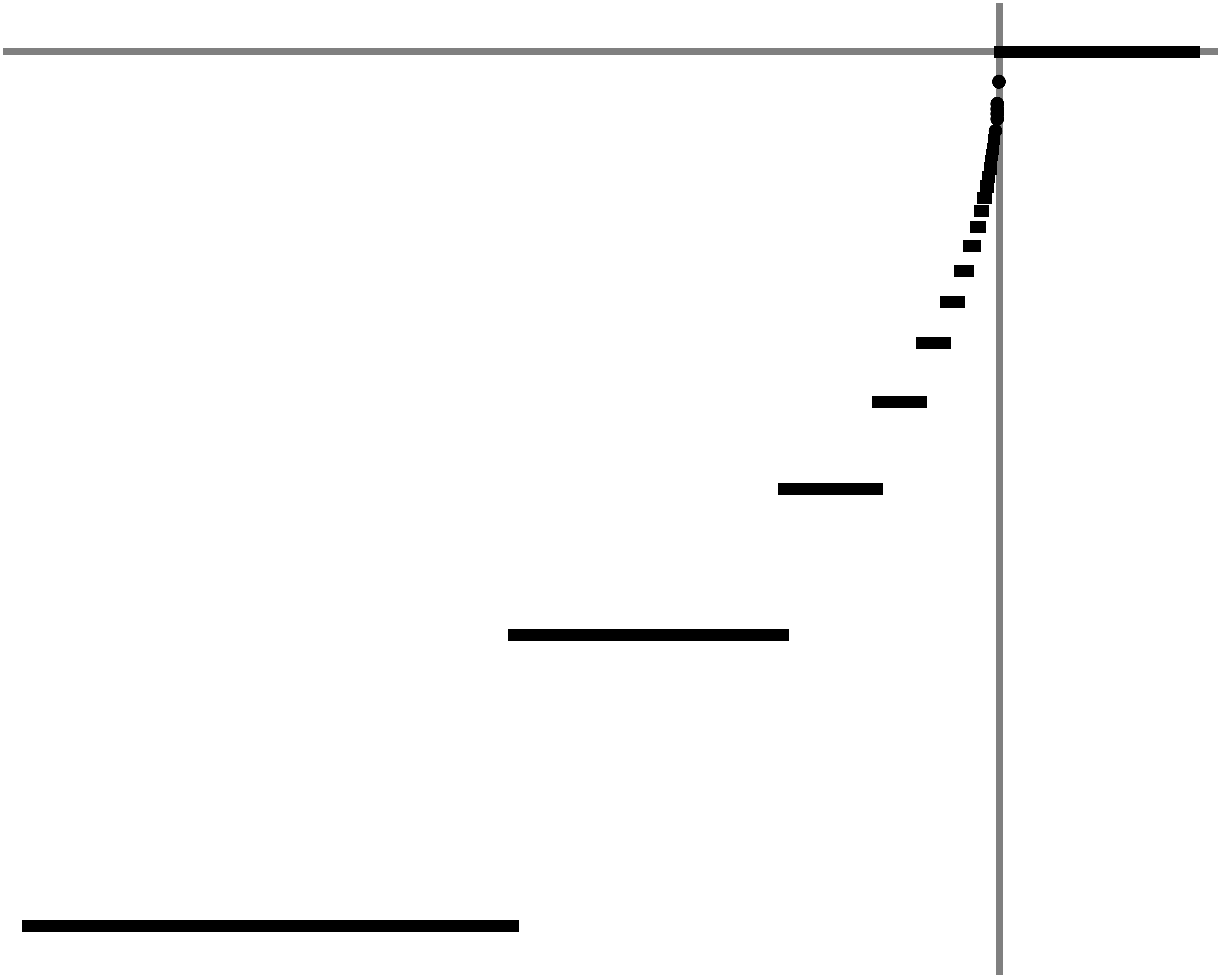}\quad \includegraphics[width=.29\textwidth]{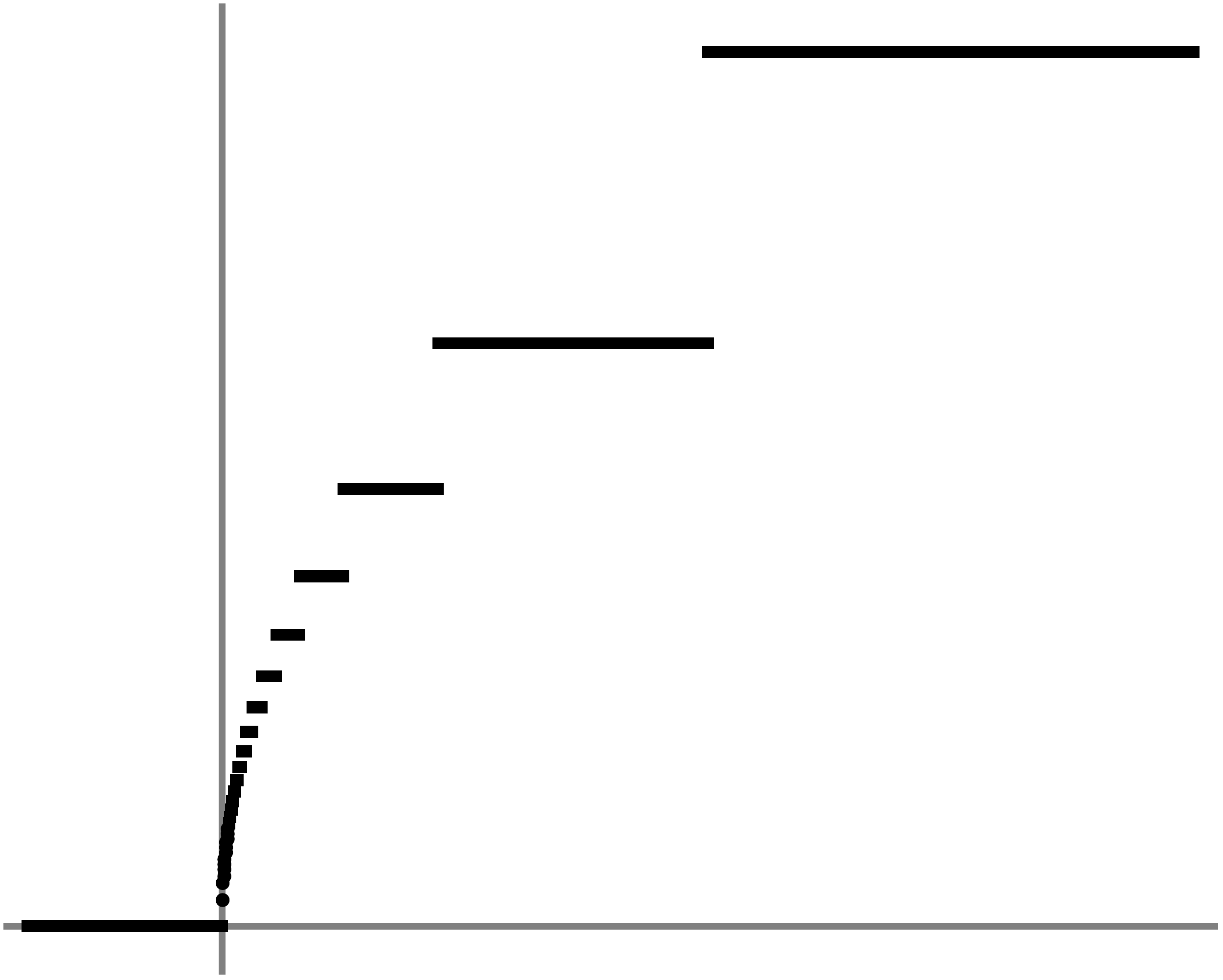}\quad \includegraphics[width=.29\textwidth]{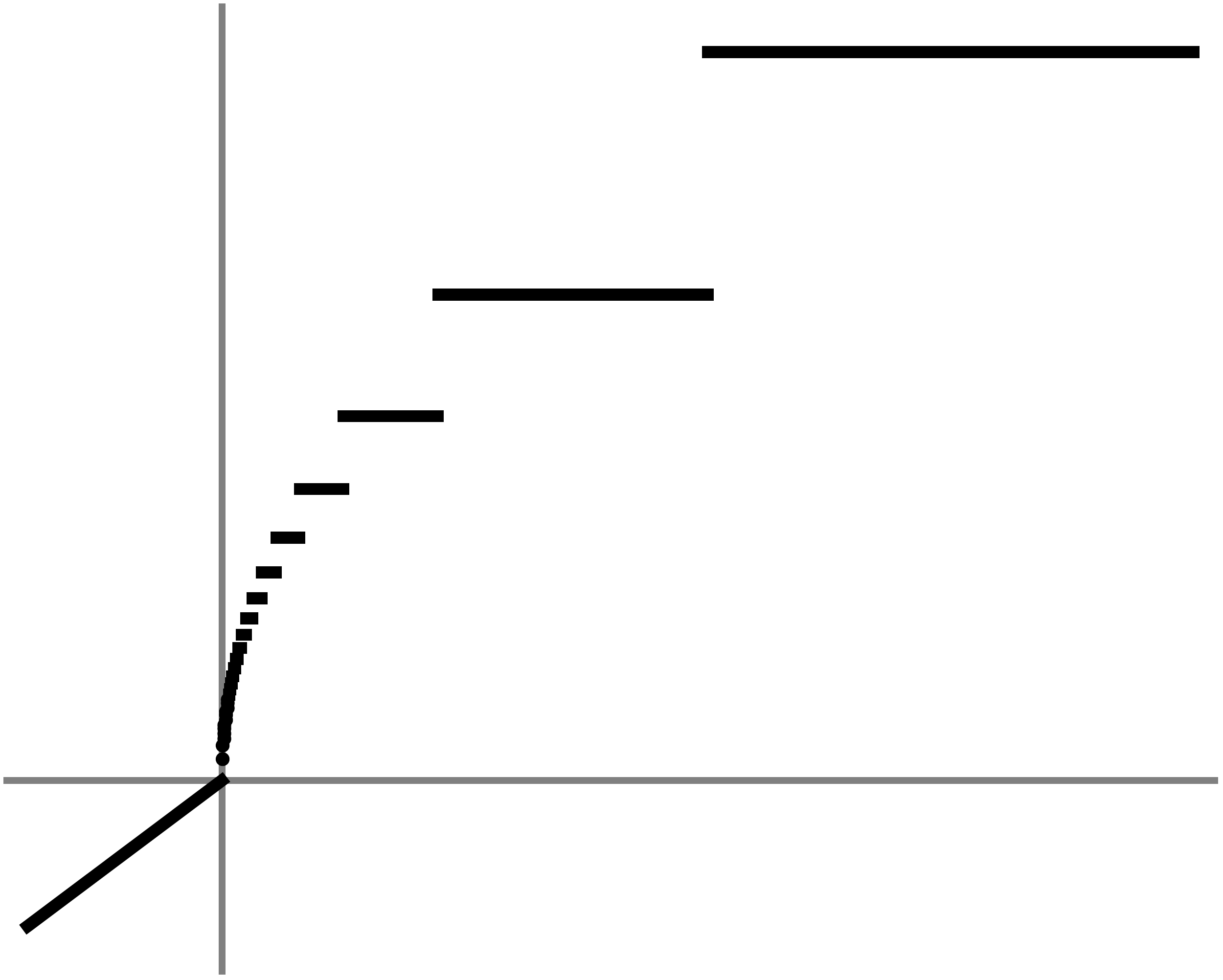}\quad 
	\caption{From left to right, representation of derivators for which zero belongs to $C_1$, $C_2$ and $C_3$ respectively.}
	\label{Cs}
\end{figure}
\begin{rem}\label{remnodif}
	Observe that, in particular, we have that $f^*$ is $g$-differentiable at every point that $f$ is $g$-differentiable, regardless of which type of points it might be. The converse is not necessarily true because, by considering the map $f^*$, we are losing information on the behavior of $f$ along the points of $C_g$. This is reflected in the extra condition that is required for the converse implication, namely, condition~\eqref{condHimpliesFleft}.
	Without it, we cannot ensure the differentiability of $f$.
	Indeed, consider, for example, the map $\widetilde{g}$ in~\eqref{gtilde} and $f:[-1,1]\to\mathbb R$ defined as
	\[
	f(t)=\begin{dcases}
		1,&\displaystyle\mbox{if }t\in\bigcup_{n=1}^\infty \left(\frac{1}{n+1},\frac{1}{n}\right),\vspace{0.1cm}\\
		0,&\mbox{otherwise,}
	\end{dcases}
	\]
	and shown in Figure~\ref{f3}.
	\begin{figure}[h]
		\centering
		\includegraphics[width=\textwidth]{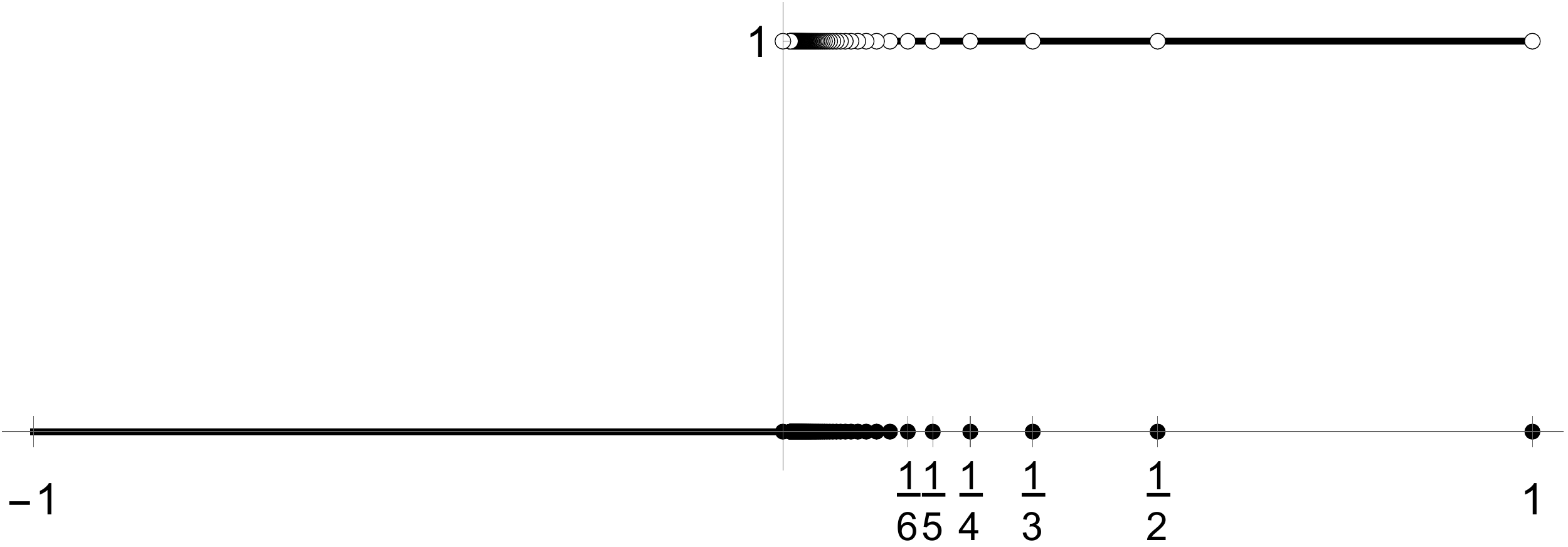}
		\caption{Graph of the function $f$.}
		\label{f3}
	\end{figure}

	Given~\eqref{Cgtilde}, it is easy to see that $f^*(t)=0$, $t\in[-1,1]$, so it follows that $f$ is $g$-differentiable on $[-1,1]$. In particular, it is $g$-differentiable at $0\in N_g^+\cap ((0,1]\cap C_g)'$, which belongs to $C_2$.

	Now, consider the sequence 
	$\{t_n\}_{n\in\mathbb N}=\left\{\frac{1}{2}\(\frac{1}{n}+\frac{1}{n+1}\)\right\}_{n\in\mathbb N}$.
	It is easy to see that it belongs to $(0,1]\cap C_g$, converges to $0$ and, furthermore,
	\[\lim_{n\to+\infty}\frac{f(t_n)-f(0)}{g(t_n)-g(0)}=\lim_{n\to+\infty}\frac{1}{\sum_{k=1}^\infty 2^{-k}\irchi_{(1/k,+\infty)}(t_n)}=\lim_{n\to+\infty}\frac{1}{\sum_{k=n+1}^\infty 2^{-k}}=+\infty.\]
	This implies that $f$ is not $g$-differentiable at $0$ and shows that 
	\eqref{condHimpliesFleft}
	is not satisfied for $t=0$. Similar counterexamples can be constructed for the remaining conditions.

\end{rem}

The following result can be directly deduced from Proposition~\ref{differentiablef*} and Corollary~\ref{differentiableDeltag} and it gives some conditions under which the map $\Delta g(t^*)$ is $g$-dif\-fer\-en\-tiable.

\begin{cor}\label{diffDeltag*}
 Consider the sets $D_1, D_2, D_3$ in~\eqref{defD1}-\eqref{defD3} and define
		$\Delta g^*:[a,b]\to\mathbb R$ as
	\[\Delta g^*(t)=\Delta g|_{[a,b]}(t^*).\] 
	Then, for $t\in[a,b]$:
	\begin{itemize}
		\item[\textup{1.}] If 
$t^*\in D_1\cup D_2\cup D_3$
		and~\eqref{pointwisecondDeltaderivable} holds, then $\Delta g^*$ is $g$-differentiable at $t$.
		\item[\textup{2.}] If 
$t^*\not \in D_1\cup D_2\cup D_3$,
		$\Delta g^*$ is $g$-differentiable at $t$.
	\end{itemize}
	In particular, $\Delta g^*$ is $g$-differentiable on $[a,b]$ if~\eqref{condDeltaderivable} holds.
	Furthermore, if $(\Delta g^*)'_g(t)$ exists, we have that
	\begin{equation*}
		\left(\Delta g^*\right)'_g(t)=-\irchi_{D_g}(t^*).
	\end{equation*}
\end{cor}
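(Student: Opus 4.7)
The plan is to deduce this corollary as a formal combination of Corollary~\ref{differentiableDeltag} with Proposition~\ref{differentiablef*}, applied to the function $f = \Delta g|_{[a,b]}$; with this choice one has $f^{*} = \Delta g^{*}$, so the question of $g$-differentiability of $\Delta g^{*}$ at $t$ reduces to that of $\Delta g|_{[a,b]}$, which has already been settled in the previous section.

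First I would apply Corollary~\ref{differentiableDeltag} to $\Delta g|_{[a,b]}$. Under the hypothesis of case~1, namely $t^{*} \in D_{1} \cup D_{2} \cup D_{3}$ together with~\eqref{pointwisecondDeltaderivable}, that corollary yields the $g$-differentiability of $\Delta g|_{[a,b]}$ at $t$; under the hypothesis of case~2, i.e.\ $t^{*} \notin D_{1} \cup D_{2} \cup D_{3}$, the same corollary delivers $g$-differentiability at $t$ with no further condition required. In either situation,~\eqref{Deltagder} identifies the value of the derivative as $(\Delta g|_{[a,b]})'_{g}(t) = -\irchi_{D_{g}}(t^{*})$.

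Next I would invoke the observation recorded in Remark~\ref{remnodif}, which extracts from Proposition~\ref{differentiablef*} the one-sided consequence that whenever $f$ is $g$-differentiable at $t$, so is $f^{*}$; the final identity in that proposition then guarantees $(f^{*})'_{g}(t) = f'_{g}(t)$ in such a case. Specializing to $f = \Delta g|_{[a,b]}$ transports the conclusion of the previous step into the $g$-differentiability of $\Delta g^{*}$ at $t$ with derivative equal to $-\irchi_{D_{g}}(t^{*})$, which is exactly the formula claimed.

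For the global ``in particular'' statement, condition~\eqref{condDeltaderivable} is precisely~\eqref{pointwisecondDeltaderivable} imposed uniformly over all points of $D_{1} \cup D_{2} \cup D_{3}$ (using that $g(t) = g(t^{*})$, so the two limits coincide), so the pointwise reasoning above applies at every $t \in [a,b]$. I do not anticipate any genuine obstacle here, as the statement is essentially a transcription of the two preceding results; the only point requiring a modicum of care is the direction of the implication in Proposition~\ref{differentiablef*}, but Remark~\ref{remnodif} delivers exactly the implication we need ($\Delta g|_{[a,b]}$ $g$-differentiable $\Rightarrow$ $\Delta g^{*}$ $g$-differentiable) with no side hypothesis relating to $C_{1}, C_{2}, C_{3}$.
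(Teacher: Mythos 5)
Your proposal is correct and follows exactly the route the paper intends: the paper gives no written proof for this corollary, stating only that it ``can be directly deduced from Proposition~\ref{differentiablef*} and Corollary~\ref{differentiableDeltag}'', which is precisely the combination you carry out, including the correct observation that only the unconditional forward implication ($f$ $g$-differentiable $\Rightarrow$ $f^*$ $g$-differentiable, as isolated in Remark~\ref{remnodif}) is needed, so the sets $C_1, C_2, C_3$ play no role.
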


Since we have some conditions guaranteeing the Stieltjes differentiability of all the maps involved in~\eqref{deltaproducto}, we can finally obtain a formula for the second Stieltjes derivative of the product of two functions, as presented in the following result.

\begin{pro}\label{twotimesproduct}
 Consider the sets $D_1, D_2, D_3$ in~\eqref{defD1}-\eqref{defD3} and let $t\in[a,b]$ and $f_1,f_2:[a,b]\to\mathbb F$ be two times $g$-differentiable at $t$, then:
	\begin{itemize}
		\item If $t^*\in D_1\cup D_2\cup D_3$
		and~\eqref{pointwisecondDeltaderivable} holds, then $f_{1} f_{2}$ is two times $g$-differentiable at $t$ and
		\begin{align}
			\left(f_{1} f_{2}\right)_{g}''(t)=&\ (f_{1})_{g}''(t) f_{2}(t^*)+f_{1}(t^*)(f_{2})_{g}''(t)+(2-\irchi_{D_g}(t^*))(f_{1})_{g}'(t)(f_{2})_{g}'(t)\nonumber\\
			&+\Delta g(t^*)((f_1)_g'(t)(f_{2})_{g}''(t)+(f_2)_g'(t)(f_{1})_{g}''(t)).\label{secondorderproductrule}
		\end{align}
		\item If $t^*\not \in D_1\cup D_2\cup D_3$
		then $f_{1} f_{2}$ is two times $g$-differentiable at $t$ and~\eqref{secondorderproductrule} holds.
	\end{itemize}
\end{pro}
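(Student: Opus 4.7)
My plan is to differentiate the identity
\[(f_1 f_2)'_g(t) = (f_1)'_g(t)\,f_2(t^*) + (f_2)'_g(t)\,f_1(t^*) + (f_1)'_g(t)(f_2)'_g(t)\,\Delta g(t^*)\]
from Proposition~\ref{PropStiDer}, read pointwise as an equality of functions of $t$, in order to express $(f_1 f_2)''_g(t)$ via the product rule applied to each of the three summands. For this I need the maps $(f_1)'_g$, $(f_2)'_g$, $f_1^*$, $f_2^*$ and $\Delta g^*$ to be $g$-differentiable at $t$. Note that the first-derivative factors appearing in the formula are evaluated at $t^*$, but by Remark~\ref{remDgCg} (and the fact that $t^{**}=t^*$) one has $(f_i)'_g(t^*)=(f_i)'_g(t)$, so this evaluation is harmless.

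The $g$-differentiability of $(f_i)'_g$ at $t$ is the hypothesis that $f_i$ is two times $g$-differentiable at $t$. For $f_i^*$, Remark~\ref{remnodif} (a consequence of Proposition~\ref{differentiablef*}) gives $g$-differentiability at $t$ with $(f_i^*)'_g(t)=(f_i)'_g(t)$, requiring no extra condition. For $\Delta g^*$, Corollary~\ref{diffDeltag*} yields $g$-differentiability at $t$ precisely in the two cases listed in the statement, with $(\Delta g^*)'_g(t)=-\irchi_{D_g}(t^*)$. Together with linearity and Proposition~\ref{PropStiDer}, this proves that $(f_1 f_2)'_g$ is itself $g$-differentiable at $t$.

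It remains to check the formula~\eqref{secondorderproductrule}. Applying the product rule once to $(f_1)'_g\cdot f_2^*$, once to $(f_2)'_g\cdot f_1^*$, and twice to $(f_1)'_g(f_2)'_g\cdot\Delta g^*$ (first differentiating the product $(f_1)'_g(f_2)'_g$, then multiplying with $\Delta g^*$) produces a collection of monomials in $(f_i)'_g(t)$, $(f_i)''_g(t)$, $f_i(t^*)$, $\Delta g(t^*)$ and $-\irchi_{D_g}(t^*)$. The crucial simplification is the identity
\[\Delta g(t^*)\,\irchi_{D_g}(t^*)=\Delta g(t^*),\]
valid because $\Delta g(t^*)=0$ whenever $t^*\notin D_g$. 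Using it, the $\Delta g(t^*)^2$ contributions from the triple product cancel against those carrying the factor $-\irchi_{D_g}(t^*)\Delta g(t^*)$, while the two copies of $(f_1)'_g(t)(f_2)'_g(t)$ arising from the first two summands combine with a $-\irchi_{D_g}(t^*)(f_1)'_g(t)(f_2)'_g(t)$ term to give the coefficient $2-\irchi_{D_g}(t^*)$ in~\eqref{secondorderproductrule}. The main obstacle is precisely this bookkeeping; spotting the cancellation enabled by the identity above is the only delicate point, but there is no deeper obstruction.
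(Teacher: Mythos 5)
Your proposal is correct and follows essentially the same route as the paper: rewrite the first-derivative formula in terms of $f_1^*$, $f_2^*$ and $\Delta g^*$, invoke Proposition~\ref{differentiablef*} and Corollary~\ref{diffDeltag*} for the differentiability of these auxiliary maps (the latter being exactly where the case split on $t^*\in D_1\cup D_2\cup D_3$ enters), and then apply the product rule term by term, simplifying via $\Delta g(t^*)\,\irchi_{D_g}(t^*)=\Delta g(t^*)$ and $(f_i^*)'_g(t)=(f_i)'_g(t)$ to reach~\eqref{secondorderproductrule}.
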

\begin{proof}
	First, note that Proposition~\ref{PropStiDer} ensures that $f_{1} f_{2}$ is $g$-differentiable at $t$ and
	\begin{align*}
		\left(f_{1} f_{2}\right)_{g}'(t)&=\left(f_{1}\right)_{g}'(t) f_{2}(t^*)+\left(f_{2}\right)_{g}'(t) f_{1}(t^*)+\left(f_{1}\right)_{g}'(t)\left(f_{2}\right)_{g}'(t) \Delta g(t^*)\\
		&=\left(f_{1}\right)_{g}'(t) f_{2}^*(t)+\left(f_{2}\right)_{g}'(t) f_{1}^*(t)+\left(f_{1}\right)_{g}'(t)\left(f_{2}\right)_{g}'(t) \Delta g^*(t).
	\end{align*}
	Furthermore, since $f_1$ and $f_2$ are $g$-differentiable at $t$, Proposition~\ref{differentiablef*} guarantees that $f_1^*$ and $f_2^*$ are also $g$-differentiable at $t$. Now, Corollary~\ref{diffDeltag*} ensures that $\Delta g^*$ is $g$-differentiable under the corresponding conditions. Hence, Proposition~\ref{PropStiDer} guarantees that $(f_{1} f_{2})'_g$ is $g$-differentiable at $t$ and, differentiating the expression in~\eqref{deltaproducto}, and denoting $\irchi^*_{D_g}(t)=\irchi_{D_g}(t^*)$, we have that
	\begin{align*}
		(f_{1} f_{2})_{g}''(t)=&\ (f_{1})_{g}''(t) f_{2}^*(t)+ (f_{1})_{g}'(t)(f_{2}^*)_{g}'(t)+(f_1)_g''(t)(f_{2}^*)_{g}'(t)\Delta g^*(t)\\
		&+(f_{1}^*)_{g}'(t)(f_{2})_{g}'(t)+f_{1}^*(t)(f_{2})_{g}''(t)+(f_1^*)_g'(t)(f_{2})_{g}''(t)\Delta g^*(t)\\
		&+((f_{1})_{g}''(t)(f_{2})_{g}'(t)+(f_{1})_{g}'(t)(f_{2})_{g}''(t)+(f_{1})_{g}''(t)(f_{2})_{g}''(t)\Delta g^*(t))\Delta g^*(t)\\
		&-((f_{1})_{g}''(t)(f_{2})_{g}'(t)+(f_{1})_{g}'(t)(f_{2})_{g}''(t)+(f_{1})_{g}''(t)(f_{2})_{g}''(t)\Delta g^*(t))\irchi^*_{D_g}(t)\Delta g^*(t)\\
		&-(f_{1})_{g}'(t)(f_{2})_{g}'(t)\irchi^*_{D_g}(t).
	\end{align*}
	Now, noting that, $\irchi^*_{D_g}(t)\Delta g^*(t)=\Delta g^*(t)$ and $(f_i^*)'_g(t)=(f_i)'_g(t)$, $i=1,2$, it follows that
	\begin{align*}
		(f_{1} f_{2})_{g}''(t)=&\ (f_{1})_{g}''(t) f_{2}^*(t)+ (f_{1})_{g}'(t)(f_{2})_{g}'(t)+(f_1)_g'(t)(f_{2})_{g}''(t)\Delta g^*(t)\\
		&+(f_{1})_{g}'(t)(f_{2})_{g}'(t)+f_{1}^*(t)(f_{2})_{g}''(t)+(f_1)_g'(t)(f_{2})_{g}''(t)\Delta g^*(t)-(f_{1})_{g}'(t)(f_{2})_{g}'(t)\irchi^*_{D_g}(t)\\
		=&\ (f_{1})_{g}''(t) f_{2}^*(t)+f_{1}^*(t)(f_{2})_{g}''(t)+(2-\irchi^*_{D_g}(t))(f_{1})_{g}'(t)(f_{2})_{g}'(t)\\
		&+\Delta g^*(t)((f_1)_g'(t)(f_{2})_{g}''(t)+(f_1)_g'(t)(f_{2})_{g}''(t)),
	\end{align*}
	which finishes the proof.
\end{proof}

\begin{rem}
	Observe that~\eqref{secondorderproductrule} yields the usual expression of the second derivative of a product of two functions when $D_g=\emptyset$ as, in that case, $\irchi_{D_g}(t^*)=\Delta g(t^*)=0$ for all $t\in[a,b]$. In particular, this means that~\eqref{secondorderproductrule} is, in fact, a generalization of the corresponding expression in the setting of the usual derivative, which corresponds to $g=\Id$.
\end{rem}

This result is enough to shed some light upon the question of higher order derivatives of a product of two functions. Given the expression in~\eqref{secondorderproductrule}, in order to have the product of two three-times differentiable functions be three-times differentiable, we would need the map $\irchi_{D_g}^*$ to be $g$-differentiable. Given Corollary~\ref{diffDeltag*}, this would imply that $\Delta g^*$ would be two-times differentiable in the Stieltjes sense. However, this is not the case, as shown in the first example in Remark~\ref{remarkDeltadiff} (observe that in that case $C_g=\emptyset$ so $\Delta g^*=\Delta g$). This means that, in order to have $g$-differentiability for $\irchi_{D_g}^*$, condition~\eqref{pointwisecondDeltaderivable} is not enough and, thus, further conditions would be required to ensure the existence of higher order derivatives of a product of functions. 
	To that end, we include the following result that from which we will derive some information about the differentiability of $\irchi_{D_g}$ and, as a consequence, of $\irchi_{D_g}^*$.

	\begin{pro}\label{firchidiff}
		Consider the sets
		\begin{align}
			\widetilde D_1&=\{t\in[a,b]\cap N_g^-: t\not\in (D_g\cap[a,t))'\},\label{deftildeD1}\\
			\widetilde D_2&=\{t\in[a,b]\cap (N_g^+\cup D_g): t\not\in (D_g\cap(t,b])'\},\label{deftildeD2}\\
			\widetilde D_3&=\{t\in[a,b]\backslash (C_g\cup N_g\cup D_g): t\not\in (D_g\cap[a,b])'\}\label{deftildeD3},
		\end{align}
and a map $h:[a,b]\to\mathbb F$.
		Given $t\in[a,b]$, the map $f:[a,b]\to\mathbb R$ defined as
		\[f(t)=h(t)\irchi_{D_g}(t),\quad t\in[a,b],\]
		is $g$-differentiable at $t$ if $t^*\in \widetilde D_1\cup \widetilde D_2\cup\widetilde D_3$ and
		\begin{equation}\label{firchiDgdif}
			f'_g(t)=
			\begin{cases}
				0,&\mbox{if }t^*\not\in D_g,\\
				\displaystyle-\frac{h(t^*)}{\Delta g(t^*)},&\mbox{if } t^*\in D_g.
			\end{cases}
		\end{equation}
		Similarly, $f^*:[a,b]\to\mathbb R$ defined as
		\[f^*(t)=h(t^*)\irchi_{D_g}(t^*),\quad t\in[a,b],\]
		is $g$-differentiable at every $t\in[a,b]$ such that $t^*\in \widetilde D_1\cup \widetilde D_2\cup\widetilde D_3$ and, in that case, 
		\[(f^*)'_g(t)=
		\begin{cases}
			0,&\mbox{if }t^*\not\in D_g,\\
			\displaystyle-\frac{h(t^*)}{\Delta g(t^*)},&\mbox{if } t^*\in D_g.
		\end{cases}
		\]
	\end{pro}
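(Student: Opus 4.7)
The strategy is to reduce each case to a direct computation where the characteristic function $\irchi_{D_g}$ vanishes identically in a suitable one-sided or two-sided neighborhood of $t^*$, so that the Stieltjes difference quotient can be evaluated by inspection. I would first handle the reduction when $t\in C_g$: in that case $t^*=b_n$ for the connected component $(a_n,b_n)\subset C_g$ containing $t$, and Remark~\ref{remDgCg} gives $f'_g(t)=f'_g(b_n)=f'_g(t^*)$, so it suffices to verify the formula at $t^*$. Since $t^*\notin C_g$, we may assume $t=t^*\in \widetilde D_1\cup\widetilde D_2\cup\widetilde D_3$ and split into three cases, using the one-sided limit from Remark~\ref{remNgderivative} where appropriate.

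If $t\in\widetilde D_3$, then $t\notin C_g\cup N_g\cup D_g$, so $\irchi_{D_g}(t)=0$ and $t$ is not an accumulation point of $D_g$; hence there is $\e>0$ with $(t-\e,t+\e)\cap D_g=\emptyset$, so $f\equiv 0$ on $(t-\e,t+\e)$ and the two-sided difference quotient is identically zero, giving $f'_g(t)=0$. The case $t\in\widetilde D_1$ is analogous: here $t\in N_g^-$ so $t\notin D_g$, the Stieltjes derivative is a left-sided limit, and $t\notin(D_g\cap[a,t))'$ yields $\e>0$ with $[t-\e,t]\cap D_g=\emptyset$, whence $f\equiv 0$ on $[t-\e,t]$ and $f'_g(t)=0$. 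The case $t\in\widetilde D_2$ requires a further split. If $t\in N_g^+$, then $t\notin D_g$ and the argument mirrors the previous one on the right, producing $f'_g(t)=0$. If instead $t\in D_g$, then $f(t)=h(t)$, while $t\notin(D_g\cap(t,b])'$ yields $\e>0$ with $(t,t+\e)\cap D_g=\emptyset$, so $f\equiv 0$ on $(t,t+\e)$; thus $f(t^+)=0$ exists and Remark~\ref{remDgCg} gives
\[
f'_g(t)=\frac{f(t^+)-f(t)}{\Delta g(t)}=-\frac{h(t)}{\Delta g(t)},
\]
which matches the claimed value since $t=t^*\in D_g$.

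For the second assertion, once $f=h\irchi_{D_g}$ is known to be $g$-differentiable at $t$ under the hypothesis $t^*\in\widetilde D_1\cup\widetilde D_2\cup\widetilde D_3$, Proposition~\ref{differentiablef*} applied to $f$ immediately implies that the starred map $f^*$ is also $g$-differentiable at $t$, with $(f^*)'_g(t)=f'_g(t)=f'_g(t^*)$, which by the formula above is exactly the value stated. The main obstacle is bookkeeping: the three sets $\widetilde D_1,\widetilde D_2,\widetilde D_3$ differ both in which one-sided or two-sided limit governs the Stieltjes derivative and in whether $\irchi_{D_g}(t^*)$ equals $0$ or $1$, so one must carefully invoke Remarks~\ref{remNgderivative} and~\ref{remDgCg} case by case. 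Once the correct local behavior of $\irchi_{D_g}$ near $t^*$ is identified, the computations themselves are immediate because $f$ vanishes in a one-sided neighborhood of $t^*$ in every subcase.
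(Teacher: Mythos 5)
Your proposal is correct and follows essentially the same route as the paper: reduce to $t=t^*\notin C_g$, split into the cases governed by $\widetilde D_1,\widetilde D_2,\widetilde D_3$ (with the $N_g^+$ versus $D_g$ subcases), observe that $f$ vanishes on a one-sided neighbourhood so the difference quotient is computable directly via Remarks~\ref{remNgderivative} and~\ref{remDgCg}, and transfer the result to $f^*$ by Proposition~\ref{differentiablef*}. The only nitpick is that $t\notin(D_g\cap[a,t))'$ gives $(t-\e,t)\cap D_g=\emptyset$ rather than $[t-\e,t]\cap D_g=\emptyset$, but this has no effect on the one-sided limit.
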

	\begin{proof}
		Observe that, thanks to Proposition~\ref{differentiablef*}, it is enough to prove the result for $f$ to obtain the desired property for $f^*$. Hence, we shall focus on showing that $f$ can only be $g$-differentiable at those $t\in[a,b]$ such that $t^*\in \widetilde D_1\cup \widetilde D_2\cup\widetilde D_3$. Furthermore, given the definitions of $t^*$ and the Stieltjes derivative, it is enough to show that, given $t\in[a,b]\backslash C_g$, $f$ is $g$-differentiable at every $t\in \widetilde D_1\cup \widetilde D_2\cup\widetilde D_3$.

		Let $t\in[a,b]\backslash C_g$. We shall study some cases separately.

		First, suppose $t\in N_g^-$. In that case, $t\not=a$ and $t\not\in D_g$, so $f(t)=0$ and the $g$-differentiability of $f$ depends on the existence of
		\[\lim_{s\to t^-}\frac{f(s)}{g(s)-g(t)}.\]
		Hence, if $t\in \widetilde D_1$, $t\not\in (D_g\cap[a,t))'$, so there exists $r>0$ such that $D_g\cap (t-r,t)$, which means that $f=0$ on $(t-r,t)$. Thus, the previous limit is zero,
		so $f$ is $g$-differentiable at $t$ and~\eqref{firchiDgdif} holds.

		Suppose now that $t\in N_g^+$. We have that $t\not=b$ and $f(t)=0$, so $f$ is $g$-differentiable at $t$ if and only if the following limit exists:
		\[\lim_{s\to t^+}\frac{f(s)}{g(s)-g(t)}.\]
		If $t\in \widetilde D_2$, then $t\not\in (D_g\cap(t,b])'$, so we find $r>0$ such that $D_g\cap (t,t+r)$, which guarantees that $f=0$ on $(t,t+r)$ so the previous limit equals zero, i.e., $f$ is $g$-differentiable at $t$ and~\eqref{firchiDgdif} holds.

		Next, suppose $t\in[a,b]\backslash (D_g\cup N_g)$. Then $f(t)=0$ so the $g$-differentiability of $f$ comes from the existence of
		\[\lim_{s\to t}\frac{f(s)}{g(s)-g(t)}.\]
		If $t\in \widetilde D_3$, then $t\not\in (D_g\cap[a,b])'$, so there is $r>0$ such that $D_g\cap (t-r,t+r)\backslash\{t\}$. This means that $f=0$ on $(t,t+r)\backslash\{t\}$, so the previous limit equals zero, that is, $f$ is $g$-differentiable at $t$ and~\eqref{firchiDgdif} holds. 

		Finally, suppose $t\in D_g$. In this case, $t\not=b$ and $f(t)=h(t)$. Observe that, as pointed out in Remark~\ref{remDgCg}, it is enough to check if $f(t^+)$ exists. If $t\in \widetilde D_2$, $t\not\in (D_g\cap(t,b])'$, so we find $r>0$ such that $D_g\cap (t,t+r)$, which guarantees that $f=0$ on $(t,t+r)$ and thus $f(t^+)=0$. It follows now that $f$ is $g$-differentiable at $t$ and
		\[f'_g(t)=\frac{f(t^+)-f(t)}{\Delta g(t)}=-\frac{{h(t)}}{\Delta g(t)},\]
		so~\eqref{firchiDgdif} holds.
	\end{proof}
\begin{rem}To visualize the sets $\widetilde D_1$, $\widetilde D_2$ and $\widetilde D_3$, we provide an illustration in Figure~\ref{TDs}.
\end{rem}
\begin{figure}[h]
	\centering
	\includegraphics[width=.29\textwidth]{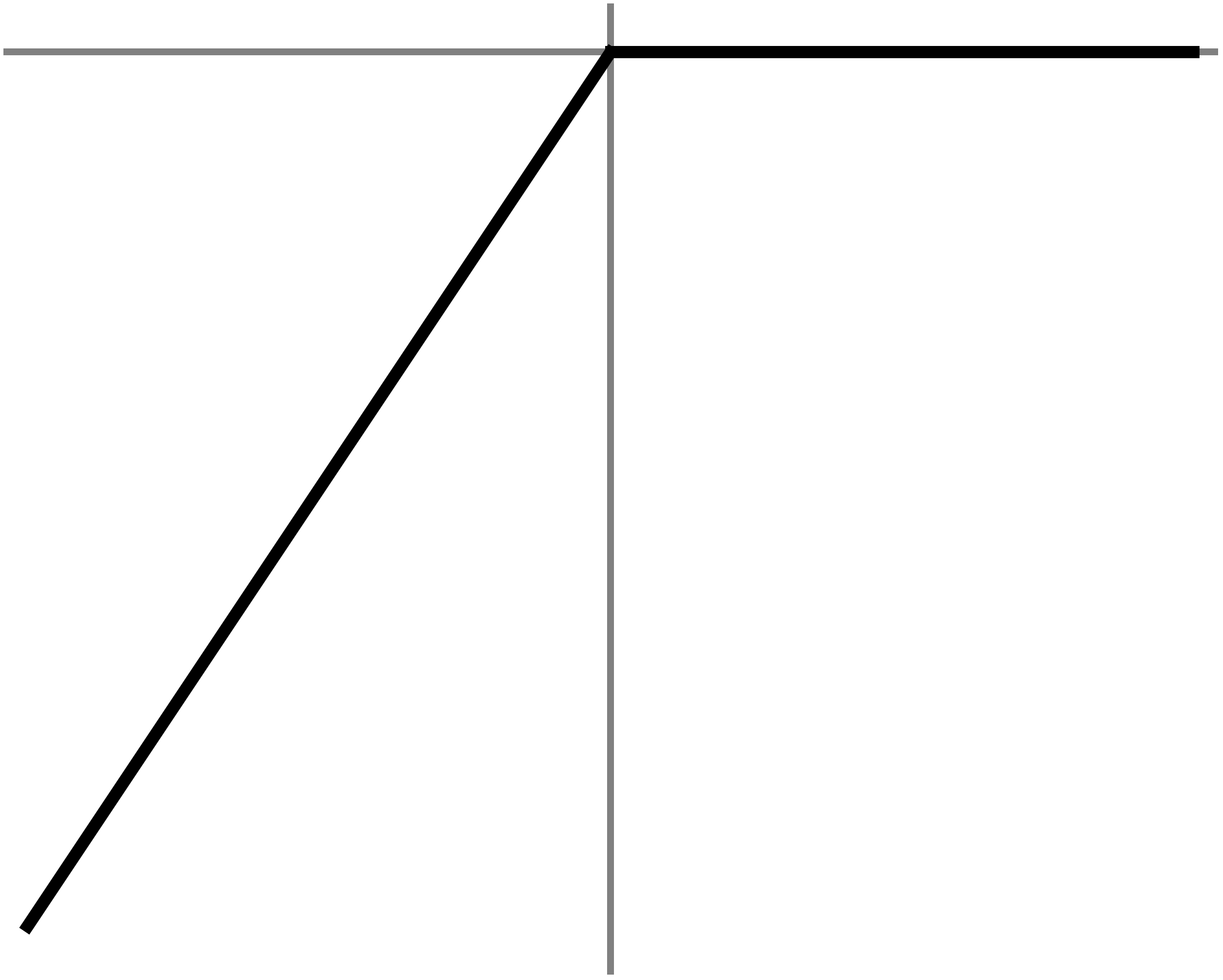}\quad \includegraphics[width=.29\textwidth]{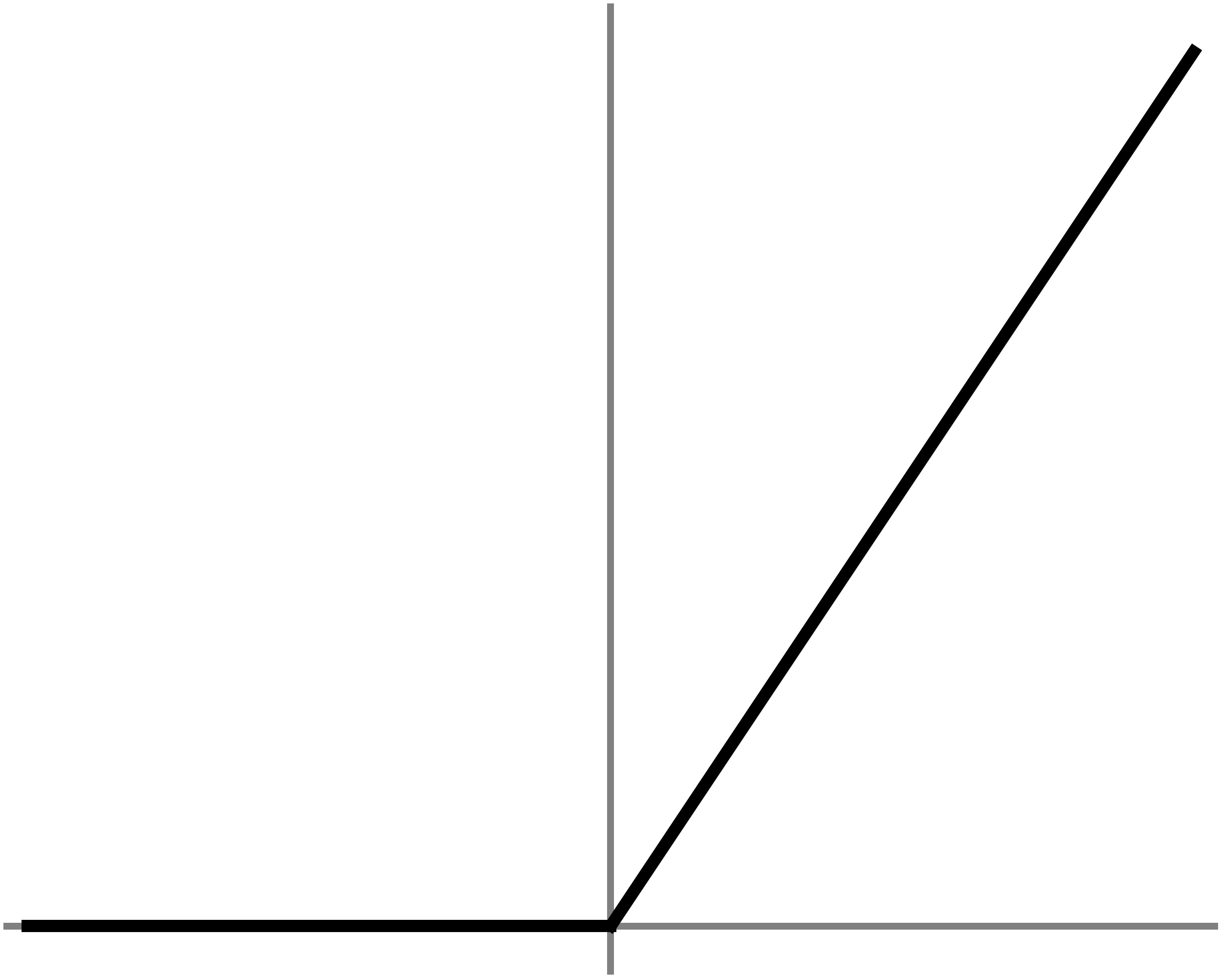}\quad \includegraphics[width=.29\textwidth]{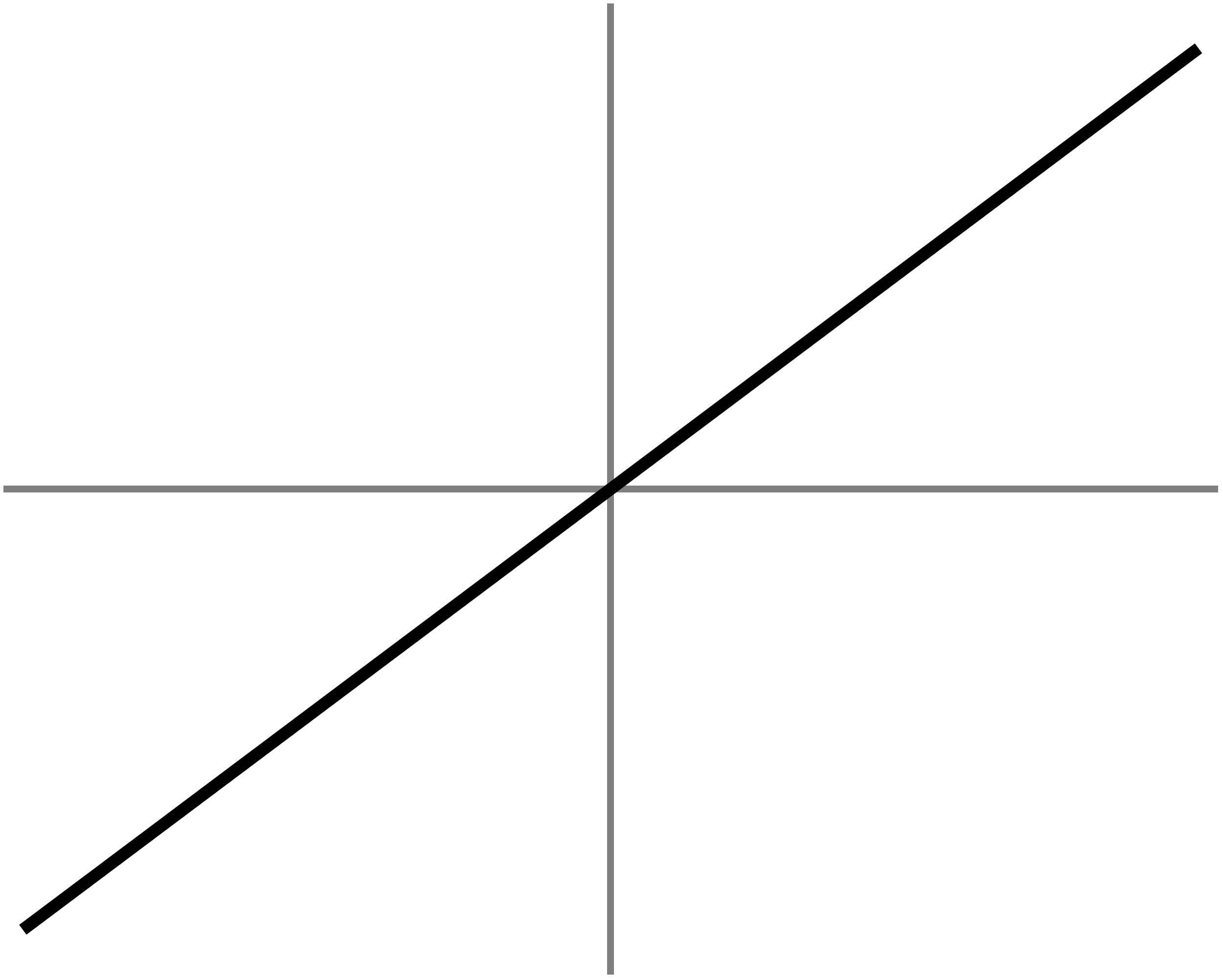}\quad 
	\caption{From left to right, representation of derivators for which zero belongs to $\widetilde D_1$, $\widetilde D_2$ and $\widetilde D_3$ respectively.}
	\label{TDs}
\end{figure}

As a direct consequence, we have the following result characterizating in depth the $g$-dif\-fer\-en\-tiability of $\irchi_{D_g}$ and $\irchi_{D_g}^*$.
\begin{cor}\label{irchidiff}
	Consider the sets $\widetilde D_1,\widetilde D_2, \widetilde D_3$ in~\eqref{deftildeD1}-\eqref{deftildeD3}. The maps $\irchi_{D_g}, \irchi_{D_g}^*$ are $g$-differentiable at every $t\in[a,b]$ such that $t^*\in \widetilde D_1\cup \widetilde D_2\cup\widetilde D_3$ and, for such points,
	\[
		(\irchi_{D_g})'_g(t)=(\irchi_{D_g}^*)'_g(t)=
		\begin{cases}
			0,&\mbox{if }t^*\not\in D_g,\\
			-(\Delta g(t))^{-1},&\mbox{if } t^*\in D_g.
		\end{cases}
\]
	If $t\in[a,b]$ is such that $t^*\not\in \widetilde D_1\cup \widetilde D_2\cup\widetilde D_3$, then $\irchi_{D_g}$ is not $g$-differentiable at $t$.
\end{cor}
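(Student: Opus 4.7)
The plan is to obtain both halves of the statement as direct specialisations of Proposition~\ref{firchidiff}. For the sufficiency, one takes the constant function $h\equiv 1$ in that proposition: then $h\cdot\irchi_{D_g}=\irchi_{D_g}$, and the analogous identity holds for the starred version, so the proposition yields at once the $g$-differentiability of both $\irchi_{D_g}$ and $\irchi_{D_g}^*$ at every $t\in[a,b]$ with $t^*\in\widetilde D_1\cup\widetilde D_2\cup\widetilde D_3$, together with the stated value of the derivative. For $t\in C_g$ one transfers the value at $t^*$ to $t$ by means of Remark~\ref{remDgCg}, observing that in the formula the $\Delta g(t)$ appearing there is to be interpreted through $\Delta g(t^*)$ via the identification $f'_g(t)=f'_g(b_n)$; for $t\notin C_g$ there is nothing to transfer since $t=t^*$.

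For the necessity, I argue by contrapositive and treat four cases mirroring the proof of Proposition~\ref{firchidiff}. Thanks to Remark~\ref{remDgCg}, the question of $g$-differentiability of $\irchi_{D_g}$ at $t$ reduces to the same question at $t^*$, so one may assume $t=t^*\notin C_g$. The assumption $t\notin\widetilde D_1\cup\widetilde D_2\cup\widetilde D_3$ then means that $t$ is an accumulation point of $D_g$ on the appropriate side: left if $t\in N_g^-$, right if $t\in N_g^+\cup D_g$, and from $[a,b]$ if $t\in[a,b]\setminus(C_g\cup N_g\cup D_g)$. The strategy is to exhibit two sequences converging to $t$ along which the difference quotient tends to different values, using that $\bR\setminus D_g$ is also dense near $t$ because $D_g$ is countable.

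More concretely, in the three subcases where $t\notin D_g$ one has $\irchi_{D_g}(t)=0$, and picking $s_n\in D_g$ tending to $t$ from the admissible side gives $\irchi_{D_g}(s_n)/(g(s_n)-g(t))$ with numerator identically $1$ and denominator tending to $0$ with a definite sign by~\eqref{cond1} or~\eqref{cond2}, hence divergence; whereas sequences $s_n\to t$ through $\bR\setminus D_g$ yield the constant value $0$. In the remaining subcase $t\in D_g$ with $t\in(D_g\cap(t,b])'$, Remark~\ref{remDgCg} reduces the differentiability of $\irchi_{D_g}$ at $t$ to the existence of the right-hand limit $\irchi_{D_g}(t^+)$; this limit cannot exist, since arbitrarily close to $t$ on the right there are both points of $D_g$ (on which $\irchi_{D_g}=1$) and points of $\bR\setminus D_g$ (on which $\irchi_{D_g}=0$).

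The main obstacle I foresee is purely combinatorial: matching the definitions of $\widetilde D_1,\widetilde D_2,\widetilde D_3$ with the correct sided versions of accumulation, and then passing from an arbitrary $t\in[a,b]$ to the representative $t^*$ by means of Remark~\ref{remDgCg} in a way that is consistent with the side on which the limit defining the Stieltjes derivative is taken (cf.\ Remark~\ref{remNgderivative}). Beyond that bookkeeping, no genuinely new analytic input is needed once Proposition~\ref{firchidiff} is available.
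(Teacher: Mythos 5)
Your proposal is correct and follows essentially the same route as the paper: the positive part is read off from Proposition~\ref{firchidiff} (with $h\equiv 1$), and the negative part is obtained by showing that, when the relevant non-accumulation condition fails, the difference quotient diverges along sequences in $D_g$ while vanishing along sequences in its (dense) complement. If anything, your explicit treatment of the subcase $t^*\in D_g$ via the non-existence of $\irchi_{D_g}(t^{*+})$ is slightly more careful than the paper's, which folds that case into the displayed limits for $t\notin D_g$.
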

\begin{proof}
	The first part of the result follows directly from Proposition~\ref{firchidiff}, so it only remains to show that $\irchi_{D_g}$ fails to be $g$-differentiable for every $t\in[a,b]$ such that $t^*\not\in \widetilde D_1\cup \widetilde D_2\cup\widetilde D_3$. To that end, it is enough to prove that it is not $g$-differentiable at every $t\in[a,b]\backslash C_g$ such that $t\not\in \widetilde D_1\cup \widetilde D_2\cup\widetilde D_3$. 

	First, suppose $t\not\in \widetilde D_1$. In that case, $t\not\in D_g$, so $\irchi_{D_g}(t)=0$. Furthermore, $t\in(D_g\cap [a,t))'$ so $\irchi_{D_g}$ is not $g$-differentiable at $t$ since
	\begin{equation}\label{irchifail1}
			\lim_{\substack{s\to t^-\\ s\in D_g}} \frac{\irchi_{D_g}(s)-\irchi_{D_g}(t)}{g(s)-g(t)}=\lim_{\substack{s\to t^-\\ s\in D_g}} \frac{\irchi_{D_g}(s)}{g(s)-g(t)}=-\infty.
	\end{equation}

	Next, suppose $t\not\in \widetilde D_2$. Similarly, we have that $t\not\in D_g$ and $t\in(D_g\cap (t,b])'$ so $\irchi_{D_g}$ is not $g$-differentiable at $t$ since
	\begin{equation}\label{irchifail2}
		\lim_{\substack{s\to t^+\\ s\in D_g}} \frac{\irchi_{D_g}(s)-\irchi_{D_g}(t)}{g(s)-g(t)}=\lim_{\substack{s\to t^+\\ s\in D_g}} \frac{\irchi_{D_g}(s)}{g(s)-g(t)}=+\infty.
	\end{equation}

Finally, suppose $t\not\in \widetilde D_3$. Then $t\not\in D_g$ and $t\in(D_g\cap [a,b])'$ so $\irchi_{D_g}$ is not $g$-differentiable at $t$ as~\eqref{irchifail1} and/or~\eqref{irchifail2} hold.
\end{proof}

	\begin{rem}
		Observe that we do not talk about the $g$-differentiability of $\irchi^*_{D_g}$ beyond the set $\widetilde D_1\cup\widetilde D_2\cup \widetilde D_3$. This is because if $C_g=\emptyset$, $\irchi^*_{D_g}=\irchi_{D_g}$ which is not $g$-differentiable outside of the mentioned set. Thus, we limit ourselves to the set in which we can guarantee differentiability for all possible nondecreasing and left-continuous functions $g$.
	\end{rem}

	We are now in position to obtain the formula for the third derivative of a product of two three-times $g$-differentiable functions. To that end, 
	we first need to note 
	that the set $D_1, D_2, D_3$ in~\eqref{defD1}-\eqref{defD3}, and the sets $\widetilde D_1,\widetilde D_2, \widetilde D_3$ satisfy that $D_i\cap \widetilde D_i=\emptyset$, $i=1,2,3$. With these relations, we can establish the following result.

	\begin{pro}
		Let $\widetilde D_1, \widetilde D_2, \widetilde D_3$ be as in~\eqref{deftildeD1}-\eqref{deftildeD3} and $f_1,f_2:[a,b]\to\mathbb F$ be three times $g$-differentiable at $t\in[a,b]$ such that $t^*\in \widetilde D_1\cup \widetilde D_2\cup\widetilde D_3$. Then $f_{1} f_{2}$ is three times $g$-differentiable at $t$ and
		\begin{align}
			(f_1f_2)'''_g(t)=&\ (f_1)'''_g(t)f_2(t^*)+f_1(t^*)(f_2)'''_g(t)+(3-\irchi_{D_g}(t^*))((f_1)_g'(t)(f_{2})_{g}''(t)+(f_2)_g'(t)(f_{1})_{g}''(t))\nonumber\\
			&+ \Delta g(t^*)((f_1)'''_g(t)(f_2)'_g(t)+2(f_{1})_{g}''(t)(f_{2})_{g}''(t)+(f_1)'_g(t)(f_2)'''_g(t))\nonumber\\
			&+Q(t^*)(f_{1})_{g}'(t)(f_{2})_{g}'(t)\label{thirdorderproductrule}
		\end{align}
		with
		\begin{equation}\label{quotientDeltag}
			Q(t)=
		\begin{cases}
			0,&\mbox{if }t\not\in D_g,\\
			(\Delta g(t))^{-1},&\mbox{if } t\in D_g.
		\end{cases}
		\end{equation}
	\end{pro}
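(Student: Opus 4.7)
The plan is to differentiate the second-order formula~\eqref{secondorderproductrule} by applying Proposition~\ref{PropStiDer} termwise. First I would check that Proposition~\ref{twotimesproduct} may indeed be invoked to obtain that starting formula. The three sets $\widetilde D_1,\widetilde D_2,\widetilde D_3$ sit inside the pairwise disjoint strata $N_g^-$, $N_g^+\cup D_g$ and $[a,b]\setminus(C_g\cup N_g\cup D_g)$, respectively (using that $N_g^-\cap N_g^+=\emptyset$, which follows from the definition of $C_g$ together with the left-continuity of $g$), and the same strata contain $D_1,D_2,D_3$; combined with the identities $\widetilde D_i\cap D_i=\emptyset$ noted just before the statement, this shows that $t^*\in\widetilde D_1\cup\widetilde D_2\cup\widetilde D_3$ forces $t^*\notin D_1\cup D_2\cup D_3$. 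Hence we are in the second case of Proposition~\ref{twotimesproduct} and the identity~\eqref{secondorderproductrule} holds at $t$.

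Next I would verify the $g$-differentiability at $t$ of every factor appearing in~\eqref{secondorderproductrule}. By hypothesis, the maps $(f_i)''_g$, $i=1,2$, are $g$-differentiable at $t$, with derivatives $(f_i)'''_g(t)$. Proposition~\ref{differentiablef*} yields that $f_i^*$ is $g$-differentiable at $t$ with $(f_i^*)'_g(t)=(f_i)'_g(t)$. Since $t^*\in\widetilde D_1\cup\widetilde D_2\cup\widetilde D_3$, Corollary~\ref{irchidiff} gives $(\irchi^*_{D_g})'_g(t)=-Q(t^*)$, and Corollary~\ref{diffDeltag*} (second case, applicable because $t^*\notin D_1\cup D_2\cup D_3$) gives $(\Delta g^*)'_g(t)=-\irchi_{D_g}(t^*)$. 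Iterated application of Proposition~\ref{PropStiDer} then yields that $(f_1f_2)''_g$ is $g$-differentiable at $t$.

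The remaining step is a direct calculation: expand the $g$-derivative of each of the five summands of~\eqref{secondorderproductrule} via the product rule and collect like terms. The simplifications are driven by two elementary identities,
\[
\irchi_{D_g}(t^*)\,\Delta g(t^*)=\Delta g(t^*),\qquad Q(t^*)\,\Delta g(t^*)=\irchi_{D_g}(t^*),
\]
both immediate from the definition of $Q$ in~\eqref{quotientDeltag}. Through them, each summand of the form $\Delta g^*\cdot(f_i)'_g(f_j)''_g$ contributes, after simplification, only $-\irchi_{D_g}(t^*)(f_i)'_g(t)(f_j)''_g(t)$ (the terms carrying a factor of $\Delta g^*$ cancel against one another), and the summand $-\irchi^*_{D_g}\cdot(f_1)'_g(f_2)'_g$ collapses to $Q(t^*)(f_1)'_g(t)(f_2)'_g(t)$. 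The main obstacle is therefore only the bookkeeping of these cancellations; after grouping the surviving terms by type (the \emph{pure} terms $(f_i)'''_g(t)f_{3-i}(t^*)$; the \emph{mixed} terms $(f_i)'_g(t)(f_{3-i})''_g(t)$, which accumulate the coefficient $3-\irchi_{D_g}(t^*)$; the factor-$\Delta g(t^*)$ terms; and the lone $Q(t^*)$-term), one obtains exactly~\eqref{thirdorderproductrule}. A quick sanity check is provided by $g=\Id$: then $D_g=\emptyset$ and $\irchi_{D_g}\equiv\Delta g\equiv Q\equiv 0$, so~\eqref{thirdorderproductrule} reduces to the classical Leibniz identity $(f_1f_2)'''=f_1'''f_2+3(f_1'f_2''+f_1''f_2')+f_1f_2'''$.
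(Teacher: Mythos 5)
Your proposal is correct and follows essentially the same route as the paper: invoke Proposition~\ref{twotimesproduct} (after checking $t^*\notin D_1\cup D_2\cup D_3$), establish $g$-differentiability of $f_i^*$, $\Delta g^*$ and $\irchi^*_{D_g}$ via Proposition~\ref{differentiablef*} and Corollaries~\ref{diffDeltag*} and~\ref{irchidiff}, then differentiate~\eqref{secondorderproductrule} termwise with Proposition~\ref{PropStiDer} and simplify using $\irchi_{D_g}(t^*)\Delta g(t^*)=\Delta g(t^*)$ and $Q(t^*)\Delta g(t^*)=\irchi_{D_g}(t^*)$. The only cosmetic difference is that the paper handles the $\Delta g^*$-summand through Proposition~\ref{propfDeltadif} directly, while you obtain the same cancellation from the product rule; your verification of the disjointness of the $\widetilde D_i$ from the $D_j$ is in fact slightly more explicit than the paper's.
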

	\begin{proof}
		Let $t\in[a,b]$ be such that $t^*\in \widetilde D_1\cup \widetilde D_2\cup\widetilde D_3$. Then, $t^*\not\in D_1\cup D_2\cup D_3$ for $D_1, D_2, D_3$ as in~\eqref{defD1}-\eqref{defD3}, so Proposition~\ref{twotimesproduct} ensures that $f_1f_2$ is two times $g$-differentiable at $t$ and $(f_1f_2)''_g(t)= A(t)+B(t)+C(t)$ with
		\begin{align*}
			A(t)&=(f_{1})_{g}''(t) f_{2}^*(t)+f_{1}^*(t)(f_{2})_{g}''(t),&t\in[a,b],\\
			B(t)&=(2-\irchi_{D_g}^*(t))(f_{1})_{g}'(t)(f_{2})_{g}'(t),&t\in[a,b],\\
			C(t)&=\Delta g^*(t)((f_1)_g'(t)(f_{2})_{g}''(t)+(f_2)_g'(t)(f_{1})_{g}''(t)),&t\in[a,b].
		\end{align*}
		Now, Proposition~\ref{differentiablef*} ensures that $f_1^*$ and $f_2^*$ are $g$-differentiable at $t$; whereas Corollary~\ref{diffDeltag*} and Corollary~\ref{irchidiff} guarantee the same property for $\Delta g^*$ and $\irchi^*_{D_g}$, so it follows from Proposition~\ref{PropStiDer} 
		that $f_1f_2$ is three times $g$-differentiable at $t$. Hence, we need to check that~\eqref{thirdorderproductrule} holds.

		First, using~\eqref{deltaproducto} it is easy to see that
		\begin{align*}
			A'_g(t)=&\ (f_1)'''_g(t)f_2(t^*)+f_1(t^*)(f_2)'''_g(t)+(f_1)''_g(t)(f_2)'_g(t)+(f_1)'_g(t)(f_2)''_g(t)\\
			&+\Delta g(t^*)((f_1)'''_g(t)(f_2)'_g(t)+(f_1)'_g(t)(f_2)'''_g(t)).	
		\end{align*}
		Next, it follows from Propositions~\ref{propfDeltadif} and~\ref{differentiablef*} that
		\[C'_g(t)=-\irchi_{D_g}(t^*)((f_1)_g'(t)(f_{2})_{g}''(t)+(f_2)_g'(t)(f_{1})_{g}''(t)).\]
		Finally,
		Corollary~\ref{irchidiff} and~\eqref{deltaproducto} ensure that
		\begin{align*}
			B'_g(t)=&\ Q(t^*)(f_{1})_{g}'(t)(f_{2})_{g}'(t)\\
			&+(2-\irchi_{D_g}(t^*))((f_{1})_{g}''(t)(f_{2})_{g}'(t)+(f_{1})_{g}'(t)(f_{2})_{g}''(t)+(f_{1})_{g}''(t)(f_{2})_{g}''(t)\Delta g(t^*))\\
			&+Q(t^*)((f_{1})_{g}''(t)(f_{2})_{g}'(t)+(f_{1})_{g}'(t)(f_{2})_{g}''(t)+(f_{1})_{g}''(t)(f_{2})_{g}''(t)\Delta g(t^*))\Delta g(t^*)\\
			=&\ Q(t^*)(f_{1})_{g}'(t)(f_{2})_{g}'(t)\\
			&+2 ((f_{1})_{g}''(t)(f_{2})_{g}'(t)+(f_{1})_{g}'(t)(f_{2})_{g}''(t)+(f_{1})_{g}''(t)(f_{2})_{g}''(t)\Delta g(t^*)),		
		\end{align*}
		where the last equality follows from the fact that $Q(t^*)\Delta g(t^*)=\irchi^*_{D_g}(t)$, $t\in[a,b]$. Thus, by the linearity of the Stieltjes derivative, $(f_1f_2)'''_g(t)=A'_g(t)+B'_g(t)+C'_g(t)$ so basic computations yield~\eqref{thirdorderproductrule}.
	\end{proof}

	Given~\eqref{thirdorderproductrule}, it is easy to see that the existence of derivatives of a product of order higher than three depends on the Stieltjes derivative of the map $Q$ in~\eqref{quotientDeltag}.
 Note that Proposition~\ref{firchidiff} shows that $Q$ is $g$-differentiable at $t$ whenever $t^*\in \widetilde D_1\cup\widetilde D_2\cup \widetilde D_3$ and, furthermore, its derivative is another function the same characteristics. Hence, restricting ourselves to such points, it follows that the existence of higher order derivatives of the product of two functions depends exclusively on the order of differentiability of the functions in the product.
 Observe that this happens, in particular, if $D_g'=\emptyset$, which gives an easy condition for the product rule to preserve the order of differentiability.

\section{Regularity of the product}

	In this final section, we shall focus on the issues with the regularity of the product of two functions that arise as a consequence of the product rule,~\eqref{deltaproducto}.	 Specifically, we aim to find the minimal conditions that ensure that the product of two functions in $\mathcal{BC}_g^1([a,b],\mathbb F)$ remains in such set. As a consequence, throughout this section, we shall assume that $a,b\in\mathbb R$, $a<b$, are such that $a\notin N_g^-\cup D_g$ and $b\notin C_g\cup N_g^+\cup D_g$ as this under the conditions under which the set $\mathcal{BC}_g^1([a,b],\mathbb F)$ is defined.

	Given $f_1,f_2\in\mathcal{BC}_g^1([a,b],\mathbb F)$ and~\eqref{deltaproducto}, it is clear that the term preventing their product from being another function in $\mathcal{BC}_g^1([a,b],\mathbb F)$ is $(f_1)'_g(f_2)'_g\Delta g^*$. In particular, it is the $g$-continuity of $\Delta g^*$ that is interfering with the regularity. Thus, we start this section by studying the set of points in which $\Delta g^*$ is $g$-continuous, 

	which we do through the following result that connects the $g$-continuity of a function $f$ with the $g$-continuity of the modified map $f^*$.
	\begin{pro}\label{continuousf*}
		If $f:[a,b]\to\mathbb F$ is $g$-continuous at $t\in [a,b]$, then $f^*$ is also $g$-continuous at $t$.
	\end{pro}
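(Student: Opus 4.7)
The plan is to work directly from the definition of $g$-continuity, leveraging two simple facts already available: first, that $g(s^*)=g(s)$ for every $s\in[a,b]$ (the basic lemma on $t^*$ stated right after the definition of $t^*$), and second, that $g$-continuity is preserved under identification of points with the same $g$-value, which is exactly \textbf{Lemma~\ref{tc}}.

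First I would fix $\varepsilon>0$ and use the $g$-continuity of $f$ at $t$ to obtain $\delta>0$ with $|f(s)-f(t)|<\varepsilon$ for every $s\in[a,b]$ satisfying $|g(s)-g(t)|<\delta$. For any such $s$, the identity $g(s^*)=g(s)$ gives $|g(s^*)-g(t)|<\delta$ as well, so applying the same estimate at the point $s^*$ yields $|f(s^*)-f(t)|<\varepsilon$, i.e.\ $|f^*(s)-f(t)|<\varepsilon$.

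Next I would replace $f(t)$ with $f^*(t)$ on the right-hand side. Since $g(t^*)=g(t)$ and $f$ is $g$-continuous at $t$, \textbf{Lemma~\ref{tc}} forces $f(t^*)=f(t)$, which is exactly $f^*(t)=f(t)$. Combining this with the previous estimate delivers $|f^*(s)-f^*(t)|<\varepsilon$ whenever $|g(s)-g(t)|<\delta$, establishing the $g$-continuity of $f^*$ at $t$.

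There is no real obstacle here: the whole argument is a one-step transfer of the $\varepsilon$--$\delta$ condition from $f$ to $f^*$, made possible by the equality $g(s)=g(s^*)$. The only subtle point, which must be made explicit, is that $f^*(t)$ need not equal $f(t)$ in general (when $t\in C_g$ we evaluate $f$ at $b_n\ne t$); but under the assumption of $g$-continuity at $t$, \textbf{Lemma~\ref{tc}} repairs this automatically. Alternatively, one can run the sequential version via~\eqref{gseqcont}: given $\{s_n\}\subset[a,b]$ with $g(s_n)\to g(t)$, the sequence $\{s_n^*\}$ satisfies $g(s_n^*)=g(s_n)\to g(t)$, hence $f(s_n^*)\to f(t)=f(t^*)=f^*(t)$, which is precisely $f^*(s_n)\to f^*(t)$.
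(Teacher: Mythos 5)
Your proof is correct and rests on exactly the same two ingredients as the paper's: the identity $g(s^*)=g(s)$ to transfer the approximation condition to the starred points, and Lemma~\ref{tc} to reconcile $f(t^*)$ with $f(t)$. The paper phrases the argument via the sequential formulation~\eqref{gseqcont} (which you also sketch at the end), but this is only a cosmetic difference from your $\varepsilon$--$\delta$ version.
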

	\begin{proof}
		We prove the result in terms of the sequential formulation of the $g$-continuity,~\eqref{gseqcont}.

		Let $\{t_n\}_{n\in\mathbb N}\subset[a,b]$ be such that $g(t_n)\to g(t)$. Since $g(s)=g(s^*)$ for all $s\in\mathbb R$, it follows that $g(t_n^*)\to g(t^*)$. Now, Lemma~\ref{tc} ensures that $f$ is $g$-continuous at $t^*$, so it follows that $f(t_n^*)\to f(t^*)$, i.e., $f^*(t_n)\to f^*(t)$. Since the sequence was arbitrarily chosen, we have that $f^*$ is $g$-continuous at $t$.
	\end{proof}
\begin{rem}
	The converse of Proposition~\ref{continuousf*} is not necessarily true. Indeed, consider the map
	\[
	g(t)=
	\begin{cases}
		t,&\mbox{if }t\in(-\infty,0]\cup(2,+\infty),\\
		1,&\mbox{if }t\in(0,2]\\
	\end{cases}
	\]
	In that case, denoting $f(t)=\Delta g|_{[-1,3]}(t)$, we have that
	\[
	f(t)=
	\begin{cases}
		1,&\mbox{if }t\in\{0,2\},\\
		0,&\mbox{if }t\in[-1,3]\backslash\{0,2\},
	\end{cases}
	\quad\quad\quad 
	f^*(t)=
	\begin{cases}
		1,&\mbox{if }t\in[0,2]\\
		0,&\mbox{if } t\in[-1,0)\cup(2,3].
	\end{cases}
	\]
	Observe that Proposition~\ref{proddg} ensures that $f$ is not $g$-continuous at $1$. However, $f^*$ is $g$-continuous at $1$ as, given $\varepsilon>0$, taking $\delta\in(0,1)$, we have that if $s\in[-1,3]$ is such that $\left\lvert g(s)-g(1)\right\rvert<\delta$ then, necessarily, $s\in(0,2]$, in which case
	$\left\lvert f^*(s)-f^*(1)\right\rvert=0<\varepsilon$.
\end{rem}

	The following result provides a partial converse to Proposition~\ref{continuousf*}.
	\begin{pro}\label{protci}
		Let $f:[a,b]\to\mathbb F$ and assume $f^*$ is $g$-continuous on $[a,b]$. Then $f$ is $g$-continuous on $[a,b]$ if and only if $f(t)=f(s)$ for every $t,s\in[a,b]$ such that $g(t)=g(s)$.
		\end{pro}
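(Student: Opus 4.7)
The plan is to prove both implications by reducing to facts already established, namely Lemma~\ref{tc} and the basic identity $g(t)=g(t^*)$ from the lemma characterizing $t^*$.

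For the forward direction, I assume that $f$ is $g$-continuous on $[a,b]$ and let $t,s\in[a,b]$ satisfy $g(t)=g(s)$. Since $f$ is $g$-continuous at $t$, Lemma~\ref{tc} applies directly and yields $f(t)=f(s)$. This gives the ``only if'' half essentially for free.

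For the converse, I assume $f^*$ is $g$-continuous on $[a,b]$ and that $f(t)=f(s)$ whenever $g(t)=g(s)$. The key observation is that, by the lemma collecting the properties of $t^*$, we have $g(t)=g(t^*)$ for every $t\in[a,b]$. The standing hypothesis then forces
\[
f(t)=f(t^*)=f^*(t),\qquad t\in[a,b],
\]
so $f$ and $f^*$ coincide as functions on $[a,b]$. Since $f^*$ is assumed $g$-continuous on $[a,b]$, $f$ is also $g$-continuous on $[a,b]$.

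There is no real obstacle here: the statement is designed precisely so that the hypothesis ``$f$ respects fibers of $g$'' collapses $f$ onto $f^*$, at which point the $g$-continuity assumption on $f^*$ transfers trivially. The only point worth emphasizing in the write-up is that Lemma~\ref{tc} alone suffices for the forward direction (no assumption on $f^*$ is needed there), whereas the converse crucially uses the hypothesis on $f^*$ to bootstrap the pointwise equality $f=f^*$ into $g$-continuity of $f$.
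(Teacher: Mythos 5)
Your proposal is correct and follows essentially the same route as the paper: the forward direction is an immediate application of Lemma~\ref{tc}, and the converse shows $f=f^*$ on $[a,b]$ using $g(t)=g(t^*)$ (which the paper verifies by splitting into the cases $t\notin C_g$ and $t\in C_g$, the latter via left-continuity of $g$) and then transfers the $g$-continuity from $f^*$ to $f$. No gaps.
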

\begin{proof}

	Observe that Lemma~\ref{tc} ensures that if $f$ is $g$-continuous at $t$ and $g(t)=g(s)$ then $f(t)=f(s)$, so if $f$ is $g$-continuous on $[a,b]$, we have that $f(t)=f(s)$ for every $t,s\in[a,b]$ such that $g(t)=g(s)$.

		Conversely, suppose that $f(t)=f(s)$ for every $t,s\in[a,b]$ such that $g(t)=g(s)$. We claim that $f=f^*$. Indeed, let $s\in[a,b]$. If $s\not\in C_g$, we have that $f(s)=f^*(s)$ as $s=s^*$. Otherwise, $s\in C_g$ and we have that $g(s)=g(s^*)$ since $g$ is left-continuous, which implies that $f(s)=f(s^*)=f^*(s)$. Since $f=f^*$ and $f^*$ is $g$-continuous, the result follows.
	\end{proof}
\begin{rem}
	It might be tempting to state Proposition~\ref{protci} in a pointwise fashion, but this does not work as the example in Remark~\ref{remnodif} shows. The map $f$ there satisfies that $f(t)=f(0)=0$ for every $t\in\bR$ such that $\widetilde g(t)=\widetilde g(0)$ (that is, $t\le 0$), but it is not $\widetilde g$-continuous at $0$ as the sequence 
		$\{t_n\}_{n\in\mathbb N}=\left\{\frac{1}{2}\(\frac{1}{n}+\frac{1}{n+1}\)\right\}_{n\in\mathbb N}$ is such that $\widetilde g(t_n)\to0=g(0)$ but $f(t_n)\to 1\not=0=f(0)$. 
	However, $f^*=0$ is clearly $\widetilde g$-continuous at $0$.
	\end{rem}

Note that, unfortunately, Proposition~\ref{protci} does not apply in the context of $\Delta g$ and $\Delta g^*$. However, 	
combining the information in Propositions~\ref{proddg} and~\ref{continuousf*}, we can obtain the following result about the $g$-continuity of $\Delta g^*$.

	\begin{pro}\label{contdg*}
		Consider the set $A_g$ in~\eqref{Ag} and
		\begin{equation*}
			H_g=\{t\in\mathbb R: t\in(s_1,s_2]\ \mbox{ for some }s_1,s_2\in D_g\mbox{ such that }(s_1,s_2)\ss C_g\}.
		\end{equation*}
		Then, the restriction $\Delta g^*|_{[a,b]}$ is $g$-continuous at every point of $((a^*,b]\backslash A_g)\cup H_g$ and $g$-discontinuous at every point of $((a^*,b)\cap A_g)\backslash H_g$. Furthermore, $\Delta g^*|_{[a,b]}$ is $g$-continuous on $[a,a^*]$,
		where we are also considering the case where $[a,a^*]$ is a degenerate interval, i.e., $[a,a^*]=\{a\}$. 
	\end{pro}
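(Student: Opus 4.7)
The plan is to decompose the statement into three independent claims. The first is continuity on $[a,a^*]$, which splits naturally by whether $a\in C_g$. If $a\notin C_g$, then $a^*=a$ and $[a,a^*]=\{a\}$. Here I would first verify that $a\notin A_g$: otherwise $\widehat{a}>a$ (since $a\notin D_g$), $g$ is constant on $[a,\widehat{a}]$, and $(a,\widehat{a})\subset C_g$ sits inside a component $(a_m,b_m)$ with $a_m=a$, forcing $a\in N_g^-$ and contradicting the standing hypothesis. Then Proposition~\ref{proddg} gives $g$-continuity of $\Delta g$ at $a$, and Proposition~\ref{continuousf*} transfers it to $\Delta g^*$. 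If instead $a\in(a_m,b_m)\subset C_g$, then $a^*=b_m$ and every $t\in[a,b_m]$ satisfies $t^*=b_m$, so $\Delta g^*$ is constantly $\Delta g(b_m)$ on $[a,a^*]$ and hence trivially $g$-continuous there.

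The second claim is the continuity on $((a^*,b]\setminus A_g)\cup H_g$. On $(a^*,b]\setminus A_g$ it is a direct combination of Propositions~\ref{proddg} and~\ref{continuousf*} applied to $f=\Delta g|_{[a,b]}$. For $t\in H_g$, I would write $t\in(a_n,b_n]$ with $a_n,b_n\in D_g$ and $(a_n,b_n)\subset C_g$, so that $t^*=b_n$ and $\Delta g^*(t)=\Delta g(b_n)$. Taking $\delta:=\min\{\Delta g(a_n),\Delta g(b_n)\}>0$, the jumps at $a_n$ and $b_n$ imply that any $s\in[a,b]$ with $|g(s)-g(t)|<\delta$ must lie in $(a_n,b_n]$, where $s^*=b_n$ and $\Delta g^*(s)=\Delta g(b_n)=\Delta g^*(t)$, establishing $g$-continuity at $t$.

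The third claim, discontinuity on $((a^*,b)\cap A_g)\setminus H_g$, is the main content. For such $t$ one has $\widehat{t}\in D_g$, $g(\widehat{t})=g(t)$ and $\Delta g^*(\widehat{t})=\Delta g(\widehat{t})>0$. I would split into cases by $t^*$. If $t^*\notin D_g$, then $\Delta g^*(t)=0\neq \Delta g^*(\widehat{t})$ and Lemma~\ref{tc} immediately rules out $g$-continuity. If $t^*\in D_g$, I would first observe that $\widehat{t}=t^*$ (since $g(s)>g(t^*)$ for $s>t^*$). When $t^*=b_n$ for some $C_g$-component $(a_n,b_n)$ (i.e.\ either $t\in(a_n,b_n)$ or $t=b_n$), the exclusion $t\notin H_g$ forces $a_n\notin D_g$; hence $g(a_n)=g(b_n)=g(t)$ but $\Delta g^*(a_n)=\Delta g(a_n)=0\neq \Delta g^*(t)$, and Lemma~\ref{tc} again denies $g$-continuity. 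The delicate remaining case is $t=t^*\in D_g$ with $t\neq b_n$ for every $n$, where no other point of $[a,b]$ carries $g$-value $g(t)$ and so Lemma~\ref{tc} is useless; I would instead exhibit a witness sequence. Picking any $s_k\to t^-$ in $[a,t)$, I would verify that $g(s_k)<g(t)$ strictly (otherwise $g$ is constant on $[s_k,t]$, placing $(s_k,t)$ inside a $C_g$-component whose right endpoint is $t$, contradicting $t\neq b_n$), deduce that $s_k^*\in[s_k,t)$ and hence $s_k^*\to t^-$, and then apply Proposition~\ref{regulated} to obtain $\Delta g^*(s_k)=\Delta g(s_k^*)\to 0\neq \Delta g(t)=\Delta g^*(t)$, while $g(s_k)\to g(t)$ by left-continuity; the sequential characterization of $g$-continuity then forbids it at $t$. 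This last sub-case --- where Lemma~\ref{tc} is unavailable and one must produce an explicit witness sequence via Proposition~\ref{regulated} --- is the main obstacle of the proof.
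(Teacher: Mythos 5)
Your treatment of the continuity on $((a^*,b]\setminus A_g)\cup H_g$ matches the paper's (Propositions~\ref{proddg} and~\ref{continuousf*} off $A_g$; the jump-isolation argument with $\delta=\min\{\Delta g(s_1),\Delta g(s_2)\}$ on $H_g$). Your discontinuity argument on $((a^*,b)\cap A_g)\setminus H_g$ is correct but organized differently: the paper splits by whether $t\in D_g$ and runs $\varepsilon$--$\delta$ estimates under conditions~\eqref{cond1}/\eqref{cond3}, whereas you split by the nature of $t^*$ and dispatch two of the three cases purely with Lemma~\ref{tc} (comparing $\Delta g^*$ at $t$ with its value at $\widehat t$, resp.\ at $a_n$), reserving an explicit witness sequence plus Proposition~\ref{regulated} for the case $t=t^*\in D_g$, $t\neq b_n$. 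That reorganization is sound — I checked that $a_n\in[a,b]$ and $a_n\notin D_g$ in your case (ii), and that $s_k^*\to t^-$ in case (iii) — and it is arguably cleaner than the paper's computation.

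There is, however, one genuine gap, in the part you dismissed as easiest: the claim that when $a\in(a_m,b_m)\subset C_g$, the constancy of $\Delta g^*$ on $[a,a^*]$ makes it ``trivially $g$-continuous there.'' Constancy on a subinterval does not imply $g$-continuity at its points, because $g$-continuity at $t\in[a,a^*]$ constrains $\Delta g^*(s)$ for \emph{every} $s\in[a,b]$ with $\left\lvert g(s)-g(t)\right\rvert<\delta$, and when $a^*=b_m\notin D_g$ this set contains points $s>a^*$ at which $\Delta g^*(s)=\Delta g(s^*)$ need not equal $\Delta g(b_m)=0$ for any obvious reason. The conclusion is true, but it requires an argument at $a^*$: by Lemma~\ref{tc} it suffices to treat $t=a^*$, and then either $a^*\in D_g$ (so $\delta<\Delta g(a^*)$ forces $s\in[a,a^*]$, where your constancy observation finishes), or $a^*\notin D_g$, in which case one must check that $a^*\notin A_g$ (it lies in $N_g^+$, so $\widehat{a^*}=a^*\notin D_g$) and invoke Propositions~\ref{proddg} and~\ref{continuousf*} — exactly the dichotomy the paper carries out. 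You supply this kind of reasoning correctly in the case $a\notin C_g$, so the fix is routine, but as written the step is a non sequitur rather than a triviality.
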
	
	\begin{proof}
		Given Propositions~\ref{proddg} and~\ref{continuousf*}, it is clear that $\Delta g^*|_{[a,b]}$ is $g$-continuous at every $t\in(a^*,b)\backslash A_g$. 
		Now, given that $b\notin C_g\cup N_g^+\cup D_g$, it follows that $b\not\in A_g$, so Propositions~\ref{proddg} and~\ref{continuousf*} are again enough to guarantee the $g$-continuity at $b$.

		Consider $t\in(a^*,b)\cap H_g$ and let us show that $\Delta g^*|_{[a,b]}$ is $g$-continuous at $t$.
		Let $\varepsilon>0$. 
		Since $t\in (a^*,b)\cap H_g$ and $b\not\in D_g$, there exist $s_1,s_2\in D_g\cap [a^*,b)$ such that $t\in(s_1,s_2]$ and $(s_1,s_2)\ss C_g$. Taking $\d=\min\{\Delta g(s_1), \Delta g(s_2)\}>0$, we have that, if $s\in\mathbb R$ is such that $\left\lvert g(s)-g(t)\right\rvert<\d$, then $s\in(s_1,s_2]$, so 
		\[\left\lvert \Delta g^*(s)-\Delta g^*(t)\right\rvert=\left\lvert \Delta g(s^*)-\Delta g(t^*)\right\rvert=\left\lvert \Delta g(s_2)-\Delta g(s_2)\right\rvert=0<\varepsilon,\]
		i.e. $\Delta g^*|_{[a,b]}$ is $g$-continuous at $t$.

		Next, we show that $\Delta g^*|_{[a,b]}$ is $g$-discontinuous at every point of $((a^*,b)\cap A_g)\backslash H_g$, distinguishing between the points that belong to $D_g$, from those which do not.

		First, let $t\in ((a^*,b)\cap A_g)\backslash H_g$ be such that $t\in D_g$. In this case, $\Delta g^*(t)=\Delta g(t)$. Proceeding in a similar fashion to the proof of Proposition~\ref{proddg}, we can have that either~\eqref{cond1} or~\eqref{cond3} holds.

		If~\eqref{cond1} holds, then, necessarily, $s\le s^*< t=t^*$ for every $s<t$. This implies that, given a sequence $\{t_n\}_\n$ converging to $t$ such that $t_n<t$, the sequence $\{t_n^*\}_\n$ has the same properties, so Proposition~\ref{regulated} guarantees that
		\[\lim_{n\to \infty}\Delta g^*(t_n)=\lim_{n\to \infty}\Delta g(t_n^*)=0.\] 
		Since the sequence $\{t_n\}_\n$ was arbitrarily chosen, we have that $\lim_{s\to t^-}\Delta g^*(s)=0$, so we can find $\rho\in(0,t-a)$ such that 
		\[\Delta g^*(s)=\Delta g(s^*)<\frac{\Delta g(t)}{2},\quad s\in\bR,\ 0< t-s<\rho.\]
		Since~\eqref{cond1} holds, so we can find $t_1<t$ such that $t-t_1^*<\rho$ and $g(t_1^*)<g(t)$. Let $\d=g(t)-g(t_1^*)$. Observe that, if $0< g(t)-g(s)<\d$, since $g$ is nondecreasing, then $0<t-s<t-t_1^*<\rho$, and so $\Delta g^*(s)<\Delta g(t)/2$. But this implies that, if $0< g(t)-g(s)<\d$, then
		\[\left\lvert \Delta g^*(s)-\Delta g^*(t)\right\rvert\ge \Delta g^*(t)-\Delta g^*(s)=\Delta g(t)-\Delta g^*(s)>\Delta g(t)-\frac{\Delta g(t)}{2}=\frac{\Delta g(t)}{2},\]
		and, therefore, $\Delta g^*$ is not continuous at $t$.

		On the other hand, if~\eqref{cond3} holds, let us consider
		\[\widetilde t=\inf\{s\in [a^*,b]\ :\ g(s)=g(t)\}.\]
		Observe that $\til t\not\in D_g$, since $t\not\in H_g$, and $\til t>a^*$, given that $a^*\in N_g^+\cup D_g$ by definition. Furthermore, we also have that $g(s)\le g(\til t)\le g(t)$ for every $s<t$ and $g(s)=g(t)$ for every $s\in [\til t,t]$. Now, reasoning as in the previous case, we find $\d\in(0,t-a)$ such that, if $0< g(\til t)-g(s)<\d$, then $\Delta g^*(s)<\Delta g(t)/2$, which, again, is enough to show that $\Delta g^*|_{[a,b]}$ is not $g$-continuous at $t$.

		Consider now $t\in ((a^*,b)\cap A_g)\backslash H_g$ such that $t\not\in D_g$ and consider $\widehat t$ as in~\eqref{hatt}.
		Since $t\in A_g$, we have that $\widehat t\in D_g$ so, given that $t,b\not\in D_g$, it follows that $t<\widehat t<b$. In particular, this implies that $\widehat t\in ((a^*,b)\cap A_g)\backslash H_g$ and $\widehat t\in D_g$, so we already know $\Delta g^*|_{[a,b]}$ is not $g$-continuous at $\til t$. Observe that, since $g$ is left-continuous, we have that $g(t)=g(\widehat t)$, so Lemma~\ref{tc} is enough to ensure that $\Delta g^*|_{[a,b]}$ is not $g$-continuous at $t$, which finishes the proof of the first part of the result.

		Let us focus now on the behaviour of $\Delta g^*|_{[a,b]}$ on $[a,a^*]$. Observe that it is enough to study the $g$-continuity at $a^*$. Indeed, this is trivial when $a^*=a$. When $a^*>a$, it is enough to note that, in that case, $a\in C_g$, which means that 
		\[g(t)=g(a^*),\quad \Delta g^*(t)=\Delta g^*(a)=\Delta g^*(a^*),\quad \quad t\in[a,a^*],\] 
		so the $g$-continuity at $t\in[a,a^*]$ can always be deduced from the $g$-continuity at $a^*$ by Lemma~\ref{tc}.

		First, suppose that $a^*\in D_g$. Given $\varepsilon>0$, take $\delta\in(0,\Delta g(a^*))$. In that case, if $t\in[a,b]$ is such that $\left\lvert g(t)-g(a^*)\right\rvert<\delta$ we necessarily have that $t\in[a,a^*]$, so
		\[\left\lvert \Delta g^*(t)-\Delta g^*(a^*)\right\rvert=0<\varepsilon.\]
		This means that $\Delta g^*|_{[a,b]}$ is $g$-continuous at $a^*$ and, thus, $g$-continuous on $[a,a^*]$.

		Finally, if $a^*\not\in D_g$, then $a^*\not\in A_g$ so $\Delta g$ is $g$-continuous at $a^*$. Thus, by Proposition~\ref{continuousf*}, $\Delta g^*|_{[a,b]}$ is $g$-continuous at $a^*$ which, again, is enough to obtain that it is $g$-continuous on $[a,a^*]$.
	\end{proof}
\begin{rem}To visualize the kind of points in $H_g$, observe that in Example~\ref{exaa}, $A_g=\{-1\}\cup[0,2]$, $H_g=(0,2]$ and $A_g\bs H_g=\{-1,0\}$.
	\end{rem}
%
%
%

We are now finally in position to determine under which conditions the product of two functions in $\mathcal{BC}_g^1([a,b],\mathbb F)$ remains in this set. This is the information in the next result.	

\begin{thm}\label{thmrp}
	Let $f_1,f_2\in\cB\cC^1_g([a,b],\bF)$. Then, $f_1f_2\in\cB\cC^1_g([a,b],\bF)$ if and only if \begin{equation}\label{regularitycond}
		(f_1)'_g(t)(f_2)'_g(t)=0,\quad \mbox{for all }t\in((a^*,b)\cap A_g)\backslash H_g.
	\end{equation}
\end{thm}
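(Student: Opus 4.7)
The plan is to unpack the product rule~\eqref{deltaproducto}, which reads
\[(f_1f_2)'_g = (f_1)'_g\,f_2^* + (f_2)'_g\,f_1^* + (f_1)'_g\,(f_2)'_g\,\Delta g^*,\]
and reduce the statement to a $g$-continuity question for the last summand. Boundedness of $f_1f_2$ and of $(f_1f_2)'_g$ is immediate from the hypotheses together with the fact that $\Delta g^*$ is bounded, since $\Delta g$ is regulated on the compact interval $[a,b]$ by Proposition~\ref{regulated}; meanwhile $g$-continuity of $f_1f_2$ follows at once from the sequential characterization of $g$-continuity. So the only substantial issue is the $g$-continuity of $(f_1f_2)'_g$ on $[a,b]$.

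Next I would invoke Proposition~\ref{continuousf*} to deduce that $f_1^*$ and $f_2^*$ are $g$-continuous on $[a,b]$, which makes $(f_1)'_g\, f_2^*$ and $(f_2)'_g\, f_1^*$ $g$-continuous as products of $g$-continuous functions. Hence the $g$-continuity of $(f_1f_2)'_g$ is equivalent to that of $\phi := (f_1)'_g\, (f_2)'_g\, \Delta g^*$. By Proposition~\ref{contdg*}, the set of $g$-discontinuity points of $\Delta g^*|_{[a,b]}$ is exactly $E := ((a^*,b)\cap A_g)\setminus H_g$, while $\Delta g^*$ is $g$-continuous everywhere else in $[a,b]$.

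For sufficiency, assuming \eqref{regularitycond}: off $E$, $\phi$ is a product of $g$-continuous maps and thus $g$-continuous. At a point $t_0 \in E$, the coefficient $(f_1)'_g(t_0)(f_2)'_g(t_0)$ vanishes by hypothesis while $(f_1)'_g (f_2)'_g$ remains $g$-continuous at $t_0$; combined with a uniform bound on $\Delta g^*$, a one-line $\varepsilon$-estimate shows $|\phi(s)-\phi(t_0)| = |\phi(s)|$ is arbitrarily small whenever $|g(s)-g(t_0)|$ is small, yielding $g$-continuity of $\phi$ at $t_0$. For necessity, I would argue by contrapositive: suppose some $t_0 \in E$ satisfies $(f_1)'_g(t_0)(f_2)'_g(t_0) \neq 0$. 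Since $(f_1)'_g (f_2)'_g$ is $g$-continuous and nonzero at $t_0$, Lemma~\ref{lemdiv} applied to the identity $\Delta g^* = \phi / ((f_1)'_g (f_2)'_g)$ would force $\Delta g^*$ to be $g$-continuous at $t_0$, contradicting Proposition~\ref{contdg*}.

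The main obstacle is essentially verifying the two directional arguments at points of $E$ cleanly: the sufficient direction relies on controlling a product of the form (function vanishing $g$-continuously at $t_0$) times (bounded but possibly $g$-discontinuous), while the necessary direction hinges on Lemma~\ref{lemdiv} to transfer the $g$-continuity of $\phi$ back to $\Delta g^*$. Everything else reduces to invoking results from Sections~3--5.
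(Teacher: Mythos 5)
Your proof is correct, and its overall architecture matches the paper's: reduce everything to the $g$-continuity of the cross term $(f_1)'_g(f_2)'_g\Delta g^*$ at the points of $E=((a^*,b)\cap A_g)\setminus H_g$ (the exact $g$-discontinuity set of $\Delta g^*|_{[a,b]}$ from Proposition~\ref{contdg*}), and prove necessity by the same device the paper uses, namely solving the product rule for $\Delta g^*$ on a $g$-neighbourhood where $(f_1)'_g(f_2)'_g\neq 0$ and invoking Lemma~\ref{lemdiv} to contradict Proposition~\ref{contdg*}. Where you genuinely diverge is the sufficiency direction. The paper argues by contradiction with a sequence $\{t_n\}$ satisfying $g(t_n)\to g(t)$ and $|h(t_n)|\ge\e_0$, and then traces where such a sequence can live: first in $[a,a^*]\cup H_g$ (since $h$ vanishes on $E$ by \eqref{regularitycond} and $\Delta g^*=0$ off $A_g$), then in $H_g$, then WLOG in $D_g$ with infinitely many distinct terms, finally contradicting the summability $\sum_{s\in[a,b]\cap D_g}\Delta g(s)\le\mu_g([a,b])<\infty$; notably this argument uses only the boundedness of $(f_1)'_g,(f_2)'_g$, not their $g$-continuity. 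You instead exploit the $g$-continuity of the product $(f_1)'_g(f_2)'_g$ (available since $f_1,f_2\in\cB\cC^1_g$): at $t_0\in E$ it is $g$-continuous with value $0$, and multiplying a function that vanishes $g$-continuously at $t_0$ by the bounded function $\Delta g^*$ (bounded because $\Delta g$ is regulated on the compact $[a,b]$, or simply because $0\le\Delta g\le g(b^+)-g(a)$ there) yields $g$-continuity of $\phi$ at $t_0$ by a direct $\e$-estimate. This is shorter and avoids the geometric bookkeeping with $A_g$, $H_g$ and $D_g$ entirely; the only price is that it leans on the full $\cB\cC^1_g$ hypothesis rather than mere boundedness of the derivatives. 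Both arguments are sound; when writing yours up, just make explicit (as the paper does with its set $I$) that the identity $\Delta g^*=\phi/((f_1)'_g(f_2)'_g)$ in the necessity step is only used on the $g$-neighbourhood of $t_0$ where the denominator is nonvanishing.
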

\begin{proof}
	First, observe that, since $f_1,f_2\in\cB\cC^1_g([a,b],\bF)$, Proposition~\ref{deltaproducto} ensures that 
	\[\left(f_{1} f_{2}\right)_{g}'(t)=\left(f_{1}\right)_{g}'(t) f_{2}(t^*)+\left(f_{2}\right)_{g}'(t) f_{1}(t^*)+\left(f_{1}\right)_{g}'(t)\left(f_{2}\right)_{g}'(t) \Delta g(t^*),\quad t\in[a,b],\]
	so, given the definition of $t^*$ and that $f_1$, $f_2$ are continuous from the left at every point of $(a,b]$, see Proposition~\ref{proreg}, we have that
	\begin{equation}\label{gcontproductexpr}
		\left(f_{1} f_{2}\right)_{g}'(t)=\left(f_{1}\right)_{g}'(t) f_{2}(t)+\left(f_{2}\right)_{g}'(t) f_{1}(t)+\left(f_{1}\right)_{g}'(t)\left(f_{2}\right)_{g}'(t) \Delta g(t^*),\quad t\in[a,b].
	\end{equation}
	Hence, since all the functions involved in this expression of $\left(f_{1} f_{2}\right)_{g}'$ are bounded, it is clear that we only need to concern ourselves with its $g$-continuity. Furthermore,
	Proposition~\ref{contdg*} guarantees that $\Delta g^*|_{[a,b]}$ is $g$-continuous on $[a,b]\setminus(((a^*,b)\cap A_g)\backslash H_g)$, so it follows that $\left(f_{1} f_{2}\right)_{g}'$ is $g$-continuous on that set. As a consequence, it is enough to show that~\eqref{regularitycond} holds if and only if 
	\begin{equation}\label{regularityaux}
		\left(f_{1} f_{2}\right)_{g}'\mbox{ is $g$-continuous at every $t\in ((a^*,b)\cap A_g)\backslash H_g$}.
	\end{equation}

	First, suppose~\eqref{regularityaux} holds.
	 Reasoning by contradiction, suppose~\eqref{regularitycond} did not hold. In that case, we would find $t\in ((a^*,b)\cap A_g)\backslash H_g$ such $(f_1)'_g(t)(f_2)'_g(t)\not=0$. Since $(f_1)_g'$ and $(f_2)_g'$ are $g$-continuous by hypothesis, we can find $\d>0$ such that
	 \[(f_1)_g'(s)(f_2)_g' (s)\ne 0,\quad s\in I:=\{r\in[a,b],\ \left\lvert g(r)-g(t)\right\rvert<\d\}.\] 
	 Thus, given~\eqref{gcontproductexpr}, we have that
	 \[\Delta g^*(s)=\frac{\left(f_{1} f_{2}\right)_{g}'(s)-\left(f_{1}\right)_{g}'(s) f_{2}(s)-\left(f_{2}\right)_{g}'(s) f_{1}(s)}{(f_1)_g'(s)(f_2)_g' (s)},\quad s\in I.\]
	 Everything on the right hand side of the equation is $g$-continuous at $t$, so, thanks to Lemma~\ref{lemdiv}, we conclude that $\Delta g^*|_{[a,b]}$ is $g$-continuous at $t$, which contradicts Proposition~\ref{contdg*}.

	Conversely, suppose~\eqref{regularitycond} holds. Consider $t\in ((a^*,b)\cap A_g)\backslash H_g$ and let us show that $(f_{1} f_{2})'_{g}$ is $g$-continuous at $t$. Given~\eqref{gcontproductexpr}, it is clear that it is enough to prove that $h:=\left(f_{1}\right)_{g}'\left(f_{2}\right)_{g}' \Delta g^*$ is $g$-continuous at $t$.

By hypothesis, $h(t)=0$. Assume, by contradiction, that $h$ is not $g$-continuous at $t$. Then there exists $\e_0>0$ and a sequence $\{t_n\}_\n\subset [a,b]$ such that $g(t_n)\to g(t)$ and $\left\lvert h(t_n)\right\rvert\ge \e_0$ for every $\n$. 
	Observe that this fact, together with~\eqref{regularitycond},
imply that $\{t_n\}_\n\ss[a,b]\backslash (((a^*,b)\cap A_g)\backslash H_g)$. Furthermore, 
	since $t^*\not\in D_g$ for any $t\not\in A_g$,
we have that $\Delta g^*|_{[a,b]}=0$ on $[a,b]\bs A_g$, so 
\[\{t_n\}_\n \ss A_g\backslash (((a^*,b)\cap A_g)\backslash H_g)=[a,a^*]\cup H_g.\] 
Now, since $t\in ((a^*,b)\cap A_g)\backslash H_g$, 
for any $\d\in (0, g(t)-g(a^*))$ we have that
\[\left\lvert g(t)-g(s)\right\rvert=g(t)-g(s)\ge g(t)-g(a^*)>\delta, \quad s\in[a,a^*].\]
 Therefore, since $g(t_n)\to g(t)$, we conclude that $t_n\in H_g$ for $n$ sufficiently big, so we can assume with loss of generality that $\{t_n\}_\n\ss H_g$. Furthermore, since $h(t_n)=h(t_n^*)$ and $g(t_n)=g(t_n^*)$ for all $n\in\mathbb N$, we can also assume that $t_n=t_n^*$ and, thus, $\{t_n\}_\n\ss D_g$. Now, we necessarily have there are infinitely many distinct terms in $\{t_n\}_\n$ for, otherwise, the sequence would be eventually constant which would imply that $t\in H_g$, which we know is not the case. Finally, since $f_1,f_2\in\cB\cC^1_g([a,b],\bF)$, there exists $M>0$ such that $\left\lvert (f_1)_g'(t_n)\right\rvert,\left\lvert (f_2)_g'(t_n)\right\rvert\le M$. Therefore,
\[\e_0\le \left\lvert h(t_n)\right\rvert=\left\lvert (f_1)_g'(t_n)(f_2)_g'(t_n)\Delta g(t_n)\right\rvert\le M^2\Delta g(t_n),\]
that is, $\Delta g(t_n)\ge \e/M^2$ for every $\n$. This means that there are infinitely many points $s\in [a,b]\cap D_g$ such that $\Delta g(s)\ge \e/M^2$, which is impossible since
\[0\le\sum_{s\in [a,b]\cap D_g} \Delta g(s)= \int_{[a,b]\cap D_g} \dif g(s)\le \int_{[a,b]}\dif g(s)=\mu_g([a,b])=g(b)-g(a)<\infty.\qedhere\]
\end{proof}
\begin{rem}
	Observe that Theorem~\ref{thmrp} generalizes, in some sense, \cite[Proposition~3.17]{Fernandez2021}, where a sufficient condition for the continuity of the derivative of the product was used: that one of the functions involved was continuous in the usual sense. Observe that if $f_1$ is continuous this automatically implies that $(f_1)'_g(t)=0$ for all $t\in[a,b]\cap D_g$, in which case~\eqref{gcontproductexpr} becomes the usual product rule formula. Theorem~\ref{thmrp}, on the other hand, provides a necessary and sufficient condition for the product of two $\mathcal{BC}_g^1([a,b],\mathbb F)$	functions to remain in that set without requiring a simpler version of the product rule.
\end{rem}
\begin{rem}
	Observe that, under the conditions of Theorem~\ref{thmrp}, for $t\in ([a,a^*]\bs A_g)\cup[(a^*,b)\bs H_g]$ we have that
		\[\left(f_{1} f_{2}\right)_{g}'(t)=\left(f_{1}\right)_{g}'(t) f_{2}(t)+\left(f_{2}\right)_{g}'(t) f_{1}(t).\]
		Furthermore, if $f_1\cdot f_2$ is two times differentiable at $t$ (that is, under the hypotheses of Proposition~\ref{twotimesproduct}), then, by~\eqref{secondorderproductrule},
	\[	\left(f_{1} f_{2}\right)_{g}''(t)=\ (f_{1})_{g}''(t) f_{2}(t^*)+f_{1}(t^*)(f_{2})_{g}''(t)+\Delta g(t^*)((f_1)_g'(t)(f_{2})_{g}''(t)+(f_2)_g'(t)(f_{1})_{g}''(t))\]
		for $t\in((a^*,b)\cap A_g)\backslash H_g$. This can serve as a basis to prove a characterization of a product of two class two functions being in the same class similar to Theorem~\ref{thmrp}.
\end{rem}

\section*{Acknowledgments}
The authors were partially supported by Xunta de Galicia, project ED431C 2019/02, and by the Agencia Estatal de Investigaci\'on (AEI) of Spain under grant MTM2016-75140-P, co-financed by the European Community fund FEDER. Ignacio M\'arquez Alb\'es was partially supported by Xunta de Galicia under grant ED481B-2021-074.

 \bibliography{refs-ho}
 \bibliographystyle{spmpsciper}

 \end{document}